\newcommand{\bitvalue}[3]{%
  \newcount\bitvalue@temp
  \newcount\bitvalue@pos
  \newcount\bitvalue@mask
  \bitvalue@temp=#2\relax       
  \bitvalue@pos=#3\relax        
  \bitvalue@mask=1\relax
  \loop
    \ifnum\bitvalue@pos>0
      \multiply\bitvalue@mask by 2
      \advance\bitvalue@pos by -1
  \repeat
  \divide\bitvalue@temp by \bitvalue@mask
  \ifodd\bitvalue@temp
    \edef#1{1}%
  \else
    \edef#1{0}%
  \fi
}
\newcommand{\patternthree}[1]{%
  \bitvalue{\bA}{#1}{0}
  \bitvalue{\bB}{#1}{1}
  \bitvalue{\bC}{#1}{2}
  \psmatrix[colsep=1.5cm,rowsep=1.5cm,mnode=circle]
    a & b & c
    \everypsbox{\scriptstyle} 
    \ncarc[arcangle=30]{1,1}{1,2}_{\bC} 
    \ncarc[arcangle=30]{1,2}{1,3}_{\bA}
    \ncarc[arcangle=50]{1,1}{1,3}_{\bB} 
    \endpsmatrix
}
\newcommand{\patternfour}[1]{%
  \bitvalue{\bA}{#1}{0}
  \bitvalue{\bB}{#1}{1}
  \bitvalue{\bC}{#1}{2}
  \bitvalue{\bD}{#1}{3}
  \bitvalue{\bE}{#1}{4}
  \bitvalue{\bF}{#1}{5}
  \psmatrix[colsep=1.5cm,rowsep=1.5cm,mnode=circle]
    a & b & c & d
    \everypsbox{\scriptstyle}
    \ncarc[arcangle=30]{1,1}{1,2}_{\bF}
    \ncarc[arcangle=30]{1,2}{1,3}_{\bC}
    \ncarc[arcangle=30]{1,3}{1,4}_{\bA}
    \ncarc[arcangle=50]{1,1}{1,3}_{\bE}
    \ncarc[arcangle=50]{1,2}{1,4}_{\bB}
    \ncarc[arcangle=60]{1,1}{1,4}_{\bD}
  \endpsmatrix
}
\title{Ramsey-like theorems for separable permutations}
\author{Quentin Le Houérou \and Ludovic Patey}
\date{\today}
\newtheorem{theorem}{Theorem}
\numberwithin{theorem}{section}
\newtheorem{lemma}[theorem]{Lemma}
\newtheorem{proposition}[theorem]{Proposition}
\newtheorem{remark}[theorem]{Remark}
\newtheorem{definition}[theorem]{Definition}
\newtheorem{statement}[theorem]{Statement}
\newtheorem{corollary}[theorem]{Corollary}
\newtheorem{question}[theorem]{Question}
\newtheorem{example}[theorem]{Example}
\newtheorem{maintheorem}[theorem]{Main Theorem}
\newtheorem*{rep@theorem}{\rep@title}
\newcommand{\newreptheorem}[2]{%
\newenvironment{rep#1}[1]{%
 \def\rep@title{#2 \ref{##1}}%
 \begin{rep@theorem}}%
 {\end{rep@theorem}}}
\def\LL{\mathbb{L}}
\newcommand{\Psf}{\mathsf{P}}
\def\F{\mathcal{F}}
\def\U{\mathcal{U}}
\def\I{\mathcal{I}}
\def\L{\mathcal{L}}
\def\V{\mathcal{V}}
\def\A{\mathcal{A}}
\def\P{\mathcal{P}}
\def\M{\mathcal{M}}
\def\Q{\mathcal{Q}}
\newcommand{\dbf}{\mathbf{d}}
\newcommand{\R}{\mathcal{R}}
\newcommand{\cs}{2^{\NN}}
\newcommand{\bstr}{2^{<\NN}}
\newcommand{\uh}{\upharpoonright}
\newcommand{\ISig}{\mathsf{I}\Sigma^0}
\newcommand{\BSig}{\mathsf{B}\Sigma^0}
\newcommand{\RCA}[0]{\mathsf{RCA}}
\newcommand{\ADS}[0]{\mathsf{ADS}}
\newcommand{\EM}[0]{\mathsf{EM}}
\newcommand{\RT}[0]{\mathsf{RT}}
\newcommand{\COH}[0]{\mathsf{COH}}
\newcommand{\SADS}{\mathsf{SADS}}
\newcommand{\GP}{\mathsf{GP}}
\newcommand{\WWKLs}[1]{#1\mbox{-}\mathsf{WWKL}}
\newcommand{\DNCs}[1]{#1\mbox{-}\mathsf{DNC}}
\newcommand{\RANs}[1]{#1\mbox{-}\mathsf{RAN}}
\newcommand{\coRT}[0]{\mathsf{coRT}}
\newcommand{\SepRT}[0]{\mathsf{Sep}\mbox{-}\mathsf{RT}}
\newcommand{\NN}[0]{\mathbb{N}}
\newcommand{\card}{\operatorname{card}}
\def\qt#1{``#1''}%
\tikzset{negated/.style={
        decoration={markings,
            mark= at position 0.5 with {
                \draw[-, line width=0.8pt] (-2pt,2.5pt) -- (2pt,-2.5pt);
            }
        },
        postaction={decorate}
    }
}
\tikzset{maybe/.style={
        decoration={markings,
            mark= at position 0.5 with {
                \node[transform shape] (tempnode) {?};
            }
        },
        postaction={decorate}
    }
}
\def\qt#1{``#1''}%
\begin{document}

\maketitle

\begin{abstract}
We conduct a computability-theoretic study of Ramsey-like theorems of the form \qt{Every coloring of the edges of an infinite clique admits an infinite sub-clique avoiding some pattern}, with a particular focus on transitive patterns. As it turns out, the patterns corresponding to separable permutations play an important role in the computational features of the statement. We prove that the avoidance of any separable permutation is equivalent to the existence of an infinite homogeneous set in standard models, while this property fails for any other pattern. For this, we develop a novel argument for relativized diagonal non-computation.
\end{abstract}

\section{Introduction}

Ramsey's theorem for pairs and $k$ colors ($\RT^2_k$) is a foundational theorem from combinatorics, stating in its graph-theoretic form that every edge-coloring of an infinite clique using $k$ colors admits an infinite monochromatic sub-clique. From a meta-mathematical viewpoint, Ramsey's theorem for pairs stands out in reverse mathematics, as it constitutes the first counter-example of an important structural phenomenon known as the \qt{Big Five}~\cite{simpson2009subsystems,hirschfeldt2015slicing}.

Many real-world applications of Ramsey's theorem in mathematics involve degenerate colorings, such as transitive and semi-transitive colorings in automata theory~\cite{murakami2014ramseyan} and order theory~\cite{hirschfeldt2007combinatorial}, colorings with bounded homogeneous sets in graph theory~\cite{dilworth1950decomposition}. Other applications only require the existence of a weaker notion of homogeneity, such as transitive sets in~\cite{erdhos_representation_1964} in graph theory. Their meta-mathematical study in the context of computability-theory and reverse mathematics~\cite{hirschfeldt2007combinatorial,houerou2025reverse,lerman2013separating} revealed many similarities, suggesting the existence of general theorems for a larger family of statements.

Recently, Patey~\cite{patey2022ramseylike} started a program whose goal is to identify the structure of Ramsey-like theorems, partially ordered by the implication relation in reverse mathematics. He defined a syntactical family of Ramsey-like theorems, parameterized by forbidden patterns, and characterized those implying the Arithmetic Comprehension Axiom ($\mathsf{ACA}_0$) in standard models of reverse mathematics. This characterization proved to be useful in the study of further statements, such as~\cite[Chapter 4]{solda2021calibrating}. A full completion of this program seems currently out of reach, but some non-trivial characterizations were obtained by Mimouni and Patey~\cite{mimouni2025ramseylike}.

In this article, we pursue the computability-theoretic study of weakenings of Ramsey's theorem for pairs by considering infinite sub-cliques avoiding some finite coloring patterns instead of monochromatic sub-cliques. For instance, the weakening of $\RT^2_2$ to cliques avoiding the following two patterns yields the Erd\H{o}s-Moser theorem ($\EM$), a statement from graph theory on infinite tournaments.
\vspace{1cm}
\begin{equation}\label{eq:non-transitivity}
\patternthree{2} \hspace{10pt} \patternthree{5}
\end{equation}
We also consider dual weakenings of Ramsey's theorem for pairs by restricting the colorings to those avoiding some pattern. For instance, $\RT^2_2$ for colorings avoiding (\ref{eq:non-transitivity}) yields the Ascending Descending Sequence theorem ($\ADS$), stating that every infinite linear order admits an infinite ascending or descending sequence. The statements $\RT^2_2$, $\EM$ and $\ADS$ have been extensively studied in reverse mathematics~\cite{seetapun1995strength,cholak2001strength,hirschfeldt2007combinatorial,lerman2013separating}. More recently, there has been a more systematic study of Ramsey-like theorems for arbitrary patterns~\cite{patey2022ramseylike,cervelle2024reverse,mimouni2025ramseylike}.

Our goal is to characterize the patterns whose avoidance implies Ramsey's theorem for pairs. The Erd\H{o}s-Moser theorem states the existence of an infinite set avoiding all the non-transitive patterns, and is known to be strictly weaker than Ramsey's theorem for pairs~\cite{lerman2013separating}. We shall therefore give a particular focus on transitive forbidden patterns, that is, patterns which do not include (\ref{eq:non-transitivity}). These patterns are in one-to-one correspondence with permutations. Among these, the class of \emph{separable permutations} introduced by Bose, Buss and Lubiw~\cite{bose1998pattern} plays an important role in studying the computability-theoretic features of Ramsey-like statements. Our main result is a characterization of the patterns whose avoidance implies Ramsey's theorem for pairs, as the separable permutations (\Cref{maintheorem1}). For this, we use an elaborate computability-theoretic analysis of a probabilistic construction.


\subsection{Reverse mathematics}

Reverse mathematics is a meta-mathematical program founded by Harvey Friedman, whose goal is to find optimal axioms to prove ordinary theorems. It uses the framework of subsystems of second-order arithmetic, with a base theory, $\RCA_0$, capturing \emph{computable mathematics}. More precisely, the \emph{$\Delta^0_1$-comprehension scheme} is the following scheme, for every $\Sigma^0_1$-formula $\varphi(x)$ and every $\Pi^0_1$-formula $\psi(x)$:
$$
\forall x(\varphi(x) \leftrightarrow \psi(x)) \to \exists A \forall x(x \in A \leftrightarrow \varphi(x))
$$
The \emph{$\Sigma^0_1$-induction scheme} is the following scheme, for every $\Sigma^0_1$-formula $\varphi(x)$:
$$
\varphi(0) \wedge \forall x(\varphi(x) \to \varphi(x+1)) \to \forall y \varphi(y)
$$
$\RCA_0$ consists of Robinson arithmetic (in other words, Peano arithmetic without induction), together with the $\Delta^0_1$-comprehension scheme and the $\Sigma^0_1$-induction scheme. See Simpson~\cite{simpson2009subsystems} for an extensive study of early reverse mathematics, and Dzhafarov and Mummert~\cite{dzhafarov2022reverse} for a modern presentation with a particular focus on combinatorics.

A structure in the language of second-order arithmetic is of the form 
$$(M, S, 0, 1, +, \cdot, <)$$
where $M$ denotes the set of integers of the model, $S \subseteq \P(M)$ is a collection of sets of integers representing the second-order part, $0, 1 \in M$ are distinguished constants, and $+$, $\cdot$ and $<$ represent the addition, multiplication and order, respectively. We are particularly interested in \emph{$\omega$-structures}, that is, structures where $M = \omega$ are the standard integers, and $0, 1, +, \cdot, <$ have the usual interpretation. An $\omega$-structure is therefore fully specified by its second-order part~$S$.

$\omega$-models of $\RCA_0$ admit a nice characterization in terms of Turing ideals. A \emph{Turing ideal} $\I$ is a non-empty class of sets which is closed downward under the Turing reduction (if $X \in \I$ and $Y \leq_T X$, then $Y \in \I$), and closed under the effective join (if $X, Y \in \I$, then $X \oplus Y = \{ 2n : n \in X \} \cup \{ 2n+1 : n \in Y \} \in \I$). An $\omega$-structure is a model of~$\RCA_0$ if and only if its second-order part is a Turing ideal. We shall exclusively consider $\omega$-structures which are models of $\RCA_0$, and simply call them \emph{$\omega$-models}.

\subsection{$\RT^2_2$-patterns}

Given a finite or infinite set~$X \subseteq \NN$, we write $[X]^2$ for the set of all unordered pairs over~$X$. We identify any integer~$\ell$ with the set~$\{0, \dots, \ell-1\}$ and write $[\ell]^2$ for the set of all unordered pairs of elements smaller than~$\ell$. We also identify the set~$[X]^2$ with the set of all increasing ordered pairs over~$X$. Therefore, given a function $f : [X]^2 \to k$, we write $f(x, y)$ for $f(\{x, y\})$, assuming $x < y$. A set $H \subseteq \NN$ is \emph{$f$-homogeneous} (for color~$i < k$) if for every~$\{x,y\} \in [H]^2$, $f(x, y) = i$.

\begin{statement}[Ramsey's theorem for pairs]
$\RT^2_k$ is the statement \qt{For every coloring $f : [\NN]^2 \to k$, there exists an infinite homogeneous set.}
\end{statement}

An \emph{$\RT^2_2$-pattern} or simply \emph{pattern} (of size~$\ell$) is a function $p : [\ell]^2 \to 2$. We then write $|p|$ for its size~$\ell$.
Fix a function $f : [\NN]^2 \to 2$. We say that a finite set $F = \{ x_0 < \dots < x_{\ell-1} \}$ \emph{$f$-realizes} $p$ if for every~$i < j$, $f(x_i, x_j) = p(i,j)$. We say that~$H \subseteq \NN$ \emph{$f$-avoids} the pattern $p$ if no subset of~$H$ $f$-realizes~$p$. When~$H = \NN$, we simply say that the coloring $f$ \emph{avoids~$p$}. We are primarily interested in the following two families of statement:

\begin{statement}
Given a pattern~$p$, $\RT^2_2(p)$ is the statement \qt{For every coloring $f : [\NN]^2 \to 2$, there exists an infinite set $f$-avoiding~$p$.}
\end{statement}

\begin{statement}
Given a pattern~$p$, $\coRT^2_2(p)$ is the statement \qt{For every coloring $f : [\NN]^2 \to 2$ avoiding~$p$, there exists an infinite homogeneous set.}
\end{statement}

Note that if the pattern~$p$ is a constant function~$i < 2$, then the statement $\RT^2_2(p)$ is combinatorially false, as witnessed by the function $f : [\NN]^2 \to 2$ also equal to~$i$ on its whole domain.
By Ramsey's theorem for pairs, $\RT^2_2(p)$ is true for any other pattern~$p$.
The statements $\RT^2_2(p)$ and $\coRT^2_2(p)$ play a dual role with respect to~$\RT^2_2$, as $\RCA_0 \vdash \RT^2_2 \leftrightarrow \RT^2_2(p) + \coRT^2_2(p)$ for any non-constant pattern~$p$. At first sight, the following Erd\H{o}s-Moser theorem cannot be cast as one of the previous statements, as it requires to avoid multiple patterns simultaneously.

\begin{statement}[Erd\H{o}s-Moser theorem]
$\EM$ is the statement \qt{For every coloring $f : [\NN]^2 \to 2$, there exists an infinite set $f$-avoiding both non-transitivity patterns (see (\ref{eq:non-transitivity}))}. 
\end{statement}

Every pattern~$p : [\ell]^2 \to 2$ admits a \emph{dual pattern} $\overline{p} : [\ell]^2 \to 2$ defined by $\bar p(x, y) = 1-p(x,y)$. In particular, the non-transitivity patterns (\ref{eq:non-transitivity}) are dual. Over $\RCA_0$, $\RT^2_2(p)$ and $\RT^2_2(\bar p)$ are equivalent, as given a coloring $f : [\NN]^2 \to 2$, one can always consider the dual coloring $\bar f : [\NN]^2 \to 2$ and apply $\RT^2_2(\bar p)$. This enables to define the Erd\H{o}s-Moser theorem as a Ramsey-like statement: Over $\RCA_0$, $\EM$ is equivalent to the statement $\RT^2_2(p)$, where $p$ is any of~(\ref{eq:non-transitivity}).

\subsection{Permutations}

A \emph{permutation} on~$\ell$ is a bijection $\pi : \ell \to \ell$. A permutation~$\pi$ can be seen as a linear order $\L_\pi = (\ell, <_\pi)$ defined for every~$\{x, y\} \in [\ell]^2$ by $x <_\pi y$ iff $\pi(x) <_\NN \pi(y)$.
There is therefore a one-to-one correspondence between permutations on~$\ell$ and patterns of size~$\ell$ avoiding both non-transitivity patterns (see (\ref{eq:non-transitivity})).

Indeed, given a permutation $\pi$ on~$\ell$, one can define a unique pattern~$p_\pi : [\ell]^2 \to 2$
coding the graph of~$<_\pi$, that is, for $x <_\NN y <_\NN \ell$, $p_\pi(x, y) = 0$ if $x <_\pi y$ and $p_\pi(x, y) = 1$ otherwise. Thanks to this correspondence, we shall consider permutations as particular $\RT^2_2$-patterns, and will use either representation when most convenient. Permutations enjoy a simple notation as their range sequence. For instance, the sequence $2031$ denotes the permutation~$\pi$ on~$4$ defined by $\pi(0) = 2$, $\pi(1) = 0$, $\pi(2) = 3$ and $\pi(3) = 1$. Under the pattern representation, this corresponds to
\vspace{1.3cm}
\begin{equation}\label{eq:pattern-2031}
\patternfour{41}    
\end{equation}

Permutations enjoy two natural associative operators, namely, the skew sum $\ominus$ and the direct sum $\oplus$, combining shorter permutations into longer ones. They are defined as follows: for $\pi$ and $\sigma$ two permutations of size $m$ and $n$, respectively,

$$(\pi\ominus\sigma)(x) = \begin{cases} \pi(x)+n & \text{for }x < m, \\
\sigma(x-m) & \text{for }m\le x < m+n,\end{cases}
$$

$$(\pi\oplus\sigma)(x) = \begin{cases} \pi(x) & \text{for }x < m,\\
\sigma(x-m)+m & \text{for }m \le x < m+n.\end{cases}
$$

These sums enable to define the class of separable permutations, which will play a central role in this article.

\begin{definition}
    A permutation is \emph{separable} if it can be obtained from the trivial permutation 0 by direct sums and skew sums.
\end{definition}

Bose, Buss and Lubiw~\cite{bose1998pattern} studied separable permutations, and gave a very nice characterization in terms of pattern avoidance, that we shall study in~\Cref{sec:irreducible-convergent}.
As it turns out, many lower bounds on the strength of $\RT^2_2(p)$ for separable permutations are uniform in~$p$, motivating the following definition.

\begin{statement}
$\SepRT^2_2$ is the statement \qt{For every coloring $f : [\NN]^2 \to 2$, there exists an infinite set $f$-avoiding some separable pattern.}
\end{statement}

In particular, for every separable permutation~$p$, $\RCA_0 \vdash \RT^2_2(p) \to \SepRT^2_2$. Moreover, 
$\RCA_0 \vdash (\SepRT^2_2 \wedge \forall p \mbox{ separable permutation } \coRT^2_2(p)) \to \RT^2_2$.
Note that in non-standard models, separable permutations might be of non-standard size, so the quantification is internal. 

We prove that $\SepRT^2_2$ implies $\RT^2_2$ over $\omega$-models, and combine this result with a theorem from Mimouni and Patey~\cite{mimouni2025ramseylike} to deduce the following characterization.

\begin{maintheorem}\label{maintheorem1}
For every pattern~$p$, $\RT^2_2(p)$ implies $\RT^2_2$ over $\omega$-models if and only if $p$ is a separable permutation.
\end{maintheorem}

\subsection{Organization}

\Cref{maintheorem1} is a characterization for which both directions are highly non-trivial. However, in one direction, most of the work is already present in Patey and Mimouni~\cite{mimouni2025ramseylike}. Indeed, they used effective forcing to prove that Ramsey-like theorems for a syntactical family of patterns are strictly weaker than Ramsey's theorem for pairs in $\omega$-models. In \Cref{sec:irreducible-convergent}, we prove that this family of patterns corresponds exactly to the patterns which are either non-transitive, or do not induce separable permutations. We state multiple combinatorial characterizations of the class of separable permutations, and derive some computability-theoretic consequences which will be useful for the remainder the article.

The proof of the other direction is divided into multiple sections, as it involves many concepts and requires multiple tools from algorithmic randomness. The goal is to show that $\SepRT^2_2$ implies Ramsey's theorem for pairs in $\omega$-models. As a warm-up, we give in \Cref{sec:minimal-covers} a simple and direct proof of a weaker result, that is, $\SepRT^2_2$ implies $\ADS$ over $\omega$-models, and deduce that $\RT^2_2$ forms a minimal cover of~$\EM$ in the partial order of Ramsey-like statements.

Patey and Mimouni~\cite{mimouni2025ramseylike} proved that every Ramsey-like theorem implies the existence of a $\emptyset'$-DNC degree (defined in \Cref{sec:2dnc-sep}), so it suffices to prove that if $p$ is a separable permutation, then every $\emptyset'$-DNC degree computes a solution to any computable instance of $\coRT^2_2(p)$. To do so, we define in \Cref{sec:basis-sep-permutations} the notion of $k$-fractal, which is a family of canonical separable permutations enjoying some basis properties. This notion is used in \Cref{sec:sep-perm-measure} to prove that $\coRT^2_2(p)$ admits probabilistic solutions for every separable permutation~$p$. The argument is then refined in \Cref{sec:2dnc-sep} to prove that every $\emptyset'$-DNC degree computes a solution to~$\coRT^2_2(p)$, and deduce \Cref{maintheorem1}.

In \Cref{sec:linear-order-measure}, we study the role of randomness in computing infinite ascending or descending sequences for computable linear orders, and combine it with \Cref{sec:sep-perm-measure} to obtain new results. In \Cref{sec:sep-perm-first-order}, we study the first-order part of Ramsey-like theorems for separable permutations. Finally, we conclude with open questions in \Cref{sec:open-questions}.



\subsection{Notation}

Throughout this article, we use the standard notations from computability theory (see Soare~\cite{soare2016turing}) and reverse mathematics (see Dzhafarov and Mummert~\cite{dzhafarov2022reverse}).
In particular, $\Phi_0, \Phi_1, \dots$ is an enumeration of all Turing functionals.

\emph{Sets}. We write in upper case latin letters $X, Y, Z, \dots$ sets of integers. Usually, $E, F$ denote finite sets. We write $X < Y$ for $\forall x \in X\ \forall y \in Y\ x < y$.

\emph{Binary strings}. We let $2^{<\NN}$ denote the set of all finite binary strings, and $2^\NN$ denote the class of all infinite binary sequences. Note that $2^\NN$ is in bijection with $\P(\NN)$ and both are usually identified.
Finite binary strings are written with greek letters~$\sigma, \tau, \mu, \dots$. The length of a binary string $\sigma$ is written~$|\sigma|$, and the concatenation of two binary strings $\sigma$ and $\tau$ is written $\sigma \cdot \tau$. Given an infinite binary sequence~$X \in 2^\NN$ and $n \in \NN$, we write $X \uh_n$ for its initial segment of length~$n$. A string~$\sigma$ is a prefix of a string~$\tau$, written $\sigma \preceq \tau$, if there is some~$\mu$ such that $\sigma \cdot \mu = \tau$.

\emph{Coding}. We write $\langle \cdot, \cdot \rangle : \NN^2 \to \NN$ for the Cantor pairing function. This extends to arbitrary finite sequences by letting $\langle a_0, \dots, a_{k-1} \rangle = \langle a_0, \langle a_1, \dots, a_{k-1} \rangle$. Accordingly, $\langle\rangle$ denotes the empty sequence.

\section{Irreducible and convergent patterns}\label[section]{sec:irreducible-convergent}

Bose, Buss and Lubiw~\cite{bose1998pattern} introduced the notion of separable permutation as a natural family for which multiple problems (sub-pattern decision, longest common sub-pattern) admit a polynomial time procedure. 
The goal of this section is to state multiple (existing and new) combinatorial characterizations of the class of separable permutations. We then derive some immediate computability-theoretic consequences which will be useful for the remainder of this article. Separable permutations admit the following historical characterization:


\begin{theorem}[Bose, Buss and Lubiw~\cite{bose1998pattern}]\label[theorem]{thm:characterization-sep-bose}
A permutation is separable if and only if it does not contain the permutations $1302$ and $2031$ as sub-patterns.
\end{theorem}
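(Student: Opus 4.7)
The plan is to prove the two directions separately: the forward implication is a short structural induction, while the backward implication rests on a decomposition lemma that is the heart of the argument.

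For the forward direction, I would first verify by direct inspection that neither $1302$ nor $2031$ is itself a non-trivial direct or skew sum. For each pattern and each possible split point $k \in \{1,2,3\}$, one checks that $\pi(\{0,\dots,k-1\})$ is never equal to $\{0,\dots,k-1\}$ nor to $\{4-k,\dots,3\}$. With this observation in hand, I would argue by structural induction on the construction of $\pi$ as a separable permutation. The base case $|\pi|=1$ is vacuous. For the inductive step, assume $\pi = \sigma \oplus \tau$ (the skew case being symmetric) and suppose a set $F = \{x_0 < x_1 < x_2 < x_3\}$ of positions $\pi$-realizes $1302$ or $2031$. Split $F = F_\sigma \sqcup F_\tau$ according to the two factor domains. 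If both parts are non-empty, then $F_\sigma < F_\tau$ positionally and $\pi(F_\sigma) < \pi(F_\tau)$ value-wise by definition of $\oplus$, so the pattern realized on $F$ is itself a non-trivial direct sum of two shorter patterns, contradicting the preliminary observation. Hence $F$ lies entirely inside one factor, and the inductive hypothesis yields the contradiction.

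For the backward direction, I would proceed by strong induction on $|\pi|$, the key being the following decomposition lemma: any permutation $\pi$ of size $\geq 2$ avoiding $1302$ and $2031$ can be written non-trivially as $\sigma \oplus \tau$ or $\sigma \ominus \tau$. Both factors then still avoid the two patterns (sub-pattern avoidance is preserved under restriction), so the induction closes. To prove the lemma, let $k \geq 1$ be maximal such that $\pi(\{0,\dots,k-1\})$ is an interval of values in $\{0,\dots,n-1\}$; such a $k$ exists since $k=1$ always works. If this interval is $\{0,\dots,k-1\}$, then $\pi$ is a direct sum at $k$; if it is $\{n-k,\dots,n-1\}$, then $\pi$ is a skew sum. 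Otherwise there exist later positions whose $\pi$-values lie strictly above and strictly below the interval, and one extracts an occurrence of $1302$ or $2031$ by combining two well-chosen positions inside $\{0,\dots,k-1\}$ with two later positions witnessing this straddling behaviour.

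The main obstacle is the last case of the decomposition lemma: turning the failure of $\pi$ to admit a trivial decomposition at $k$ into an explicit copy of $1302$ or $2031$. The delicate point is selecting the two boundary positions inside $\{0,\dots,k-1\}$ according to the relative order of the straddling values occurring after position $k-1$, and using the maximality of $k$ to prevent the extracted 4-point sub-pattern from collapsing to a separable one. This step requires a careful case analysis on whether $\pi(k)$ sits above or below the interval $\pi(\{0,\dots,k-1\})$, and is the part of the proof where the full force of avoiding \emph{both} patterns (rather than just one) is used.
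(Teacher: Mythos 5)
The paper does not prove this theorem: it is attributed to Bose, Buss and Lubiw~\cite{bose1998pattern} and simply cited, so there is no internal proof to compare against. Assessed on its own, your forward direction is fine, but the backward direction has a genuine gap in the extraction step of your decomposition lemma. First, when $k=1$ is the maximal prefix length whose image is an interval --- which does happen, e.g.\ for $\pi = 1302$ itself, where $\pi(\{0,1\})=\{1,3\}$ and $\pi(\{0,1,2\})=\{0,1,3\}$ are already non-intervals --- there is only one position in $\{0,\dots,k-1\}$, so \qt{two well-chosen positions inside $\{0,\dots,k-1\}$} cannot be selected. Second, even for $k\geq 2$ your prescription does not produce the right pattern: if $\pi(\{0,\dots,k-1\})=[a,b]$ with $0<a$ and $b<n-1$, and you take two prefix positions $i<j<k$ together with the positions of the straddling values $a-1$ and $b+1$, then the two prefix values sit strictly between the two straddling values, so the induced $4$-pattern is one of $1203$, $2103$, $1230$, $2130$ depending on relative order --- never $1302$ or $2031$.

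A working extraction seems to require position $k$ itself among the four chosen points, using that maximality forces $\pi(k)\leq a-2$ or $\pi(k)\geq b+2$, and then splitting on whether the position of value $a-1$ precedes or follows the position of value $b+1$. Even so, only two of the four resulting sub-cases give $1302$ or $2031$ immediately; the other two require a further, non-obvious argument (and in particular another use of the maximality of $k$). This is exactly the step your sketch labels \qt{delicate} without supplying, and it is where the real content of the proof lies, so as written the proposal is incomplete.
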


The main interest of this characterization in this article is the existence of a finite number of patterns (the two non-transitivity patterns and the permutations $1302$ and $2031$) such that every coloring avoiding all of them also avoids all non-separable patterns.
Among the consequences, the following corollary states that over~$\RCA_0$, being a model of the Erd\H{o}s-Moser theorem and of $\RT^2_2(1302)$ is sufficient to satisfy $\RT^2_2(p)$ for any other $\RT^2_2$-pattern~$p$, except for the separable permutations. This is actually an equivalence as $\EM$ and $\RT^2_2(1302)$ are both equivalent over~$\RCA_0$ to a statement of the form $\RT^2_2(p)$ for such a pattern~$p$. \Cref{cor:non-sep-finite-basis} will be very useful for proving tightness of some theorems about $\SepRT^2_2$ in \Cref{sec:linear-order-measure}.

\begin{corollary}\label[corollary]{cor:non-sep-finite-basis}
For every $\RT^2_2$-pattern~$p$ which is not a separable permutation, $\RCA_0 + \EM + \RT^2_2(1302) \vdash \RT^2_2(p)$.
\end{corollary}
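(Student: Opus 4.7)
The plan is to split on whether $p$ is a permutation (equivalently, whether $p$ avoids both non-transitivity patterns in (\ref{eq:non-transitivity})). The whole argument rests on an elementary monotonicity principle that I would state first: if $q$ is a sub-pattern of $p$ (i.e.\ $q$ is the restriction of $p$ to a subset of positions, re-indexed), then any $f$-realization of $p$ contains an $f$-realization of $q$. Consequently, an infinite set which $f$-avoids $q$ automatically $f$-avoids $p$, so $\RCA_0 \vdash \RT^2_2(q) \to \RT^2_2(p)$ whenever $q$ embeds into $p$.

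\textbf{Case 1: $p$ is not a permutation.} Then the relation on $[|p|]^2$ induced by $p$ is not a linear order, so there exist three indices $i<j<k$ on which $p$ restricts to a non-transitivity pattern. Thus one of the patterns in (\ref{eq:non-transitivity}) embeds into $p$, and $\EM$ (which $\RCA_0$-equivalently provides an infinite $f$-avoiding set for each of these two patterns) delivers the desired infinite $f$-avoiding set for $p$ by the monotonicity principle.

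\textbf{Case 2: $p$ is a non-separable permutation.} By \Cref{thm:characterization-sep-bose}, $p$ contains $1302$ or $2031$ as a sub-pattern. A direct computation of the two pattern representations shows that $2031 = \overline{1302}$, so the duality remark in the excerpt ($\RCA_0 \vdash \RT^2_2(q) \leftrightarrow \RT^2_2(\bar q)$) yields $\RT^2_2(1302) \leftrightarrow \RT^2_2(2031)$ over $\RCA_0$. Applying the appropriate one of these two equivalent statements and again invoking monotonicity gives an infinite $f$-avoiding set for $p$.

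The only step with any subtlety is the combinatorial observation in Case 1 that a non-permutation pattern necessarily contains a non-transitivity triple; but this is essentially the definition of being a permutation via the one-to-one correspondence with linear orders described in the excerpt, so it is a bookkeeping step rather than a genuine obstacle. No computability-theoretic input is required beyond the hypotheses $\EM$ and $\RT^2_2(1302)$, and the proof formalizes immediately in $\RCA_0$ since the sub-pattern witness is a finite object definable directly from $p$.
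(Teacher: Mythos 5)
Your proof is correct and rests on the same three ingredients as the paper's: the Bose--Buss--Lubiw characterization (\Cref{thm:characterization-sep-bose}), the duality $\RT^2_2(1302) \leftrightarrow \RT^2_2(2031)$, and the hereditary nature of pattern avoidance under sub-patterns. The organization differs slightly: the paper applies $\EM$, $\RT^2_2(1302)$, and $\RT^2_2(2031)$ in succession to produce a single infinite set $H$ that $f$-avoids all four forbidden configurations at once (relying on the fact that avoidance persists under passage to infinite subsets, so the three applications can be chained), and then observes via \Cref{thm:characterization-sep-bose} that only separable permutations can appear in $H$, so this one $H$ works for every non-separable $p$ simultaneously. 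You instead case-split on whether $p$ is a permutation, make the monotonicity principle ($q$ embeds in $p$ implies $\RT^2_2(q) \to \RT^2_2(p)$) explicit, and invoke exactly one of the two hypotheses depending on which forbidden configuration embeds into the given $p$. Both are sound and formalize in $\RCA_0$; your version is marginally more economical per instance, while the paper's yields a uniform solution independent of $p$, which is what one wants when phrasing the result as a single $\omega$-model or $\SepRT^2_2$-style statement later.
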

\begin{proof}
First, note that $\RCA_0 \vdash \RT^2_2(1302) \leftrightarrow \RT^2_2(2031)$ as these patterns are dual.
Fix an $\RT^2_2$-pattern~$p$ which is not a separable permutation.
Let $f : [\NN]^2 \to 2$ be a coloring. By $\EM$, $\RT^2_2(1302)$ and $\RT^2_2(2031)$, there is an infinite subset $H \subseteq \NN$ $f$-avoiding the non-transitivity $\RT^2_2$-patterns (see (\ref{eq:non-transitivity}))
and the permutations 1302 and 2031. It follows from \Cref{thm:characterization-sep-bose} that the only $\RT^2_2$-patterns which $f$-appear in~$H$ are separable permutations, hence $H$ is an $\RT^2_2(p)$-solution to~$f$.
\end{proof}

Mimouni and Patey~\cite{mimouni2025ramseylike} introduced two operators on $\RT^2_2$-patterns, playing an important role in the computability-theoretic study of these statements. The \emph{join} $p \uplus q$ of two patterns $p$ and $q$ is the pattern of size $|p|+|q|-1$ defined for every~$x < y < |p|+|q|-1$ by
$$
(p \uplus q)(x, y) = \left\{ \begin{array}{ll}
    p(x, y) & \mbox{ if } y < |p|\\
    q(x-|p|+1, y-|p|+1) & \mbox{ if } x \geq |p|-1\\
    p(x, |p|-1) & \mbox{ if } x < |p|-1 \mbox{ and } y > |p|-1
\end{array}\right.
$$
They proved over $\RCA_0$ that for every computable coloring $f : [\NN]^2 \to 2$, every infinite set~$H$ $f$-avoiding $p \uplus q$ computes an infinite subset~$G \subseteq H$ $f$-avoiding either~$p$ or $q$. It follows that $\RCA_0 \vdash \coRT^2_2(p) \wedge \coRT^2_2(q) \to \coRT^2_2(p \uplus q)$ for every pair of patterns~$p$ and~$q$.

\begin{definition}
A pattern $p$ is \emph{reducible} if it is of the form $p_0 \uplus p_1$ for two patterns $p_0$ and $p_1$ of size at least~2. Otherwise, $p$ is \emph{irreducible}.
\end{definition}

Given a pattern~$p : [\ell]^2 \to 2$ and a color~$c < 2$, we write $p \rhd c$ for the pattern of size~$\ell+1$ such that $(p \rhd c)(x, y) = p(x, y)$ if $x < y < \ell$ and $(p \rhd c)(x, \ell) = c$ if $x < \ell$.

\begin{definition}
A pattern $p$ is \emph{convergent} if it is of the form $p \rhd i$ for some pattern~$p$ and some $i < 2$. Otherwise, it is \emph{divergent}.
\end{definition}

The skew sum and direct sum can be reformulated in terms of the join and convergence operators.
Indeed, if $p$ and $q$ are two permutations, then $p \oplus q = (p \rhd 0) \uplus q$ and $p \ominus q = (p \rhd 1) \uplus q$. Mimouni and Patey~\cite{mimouni2025ramseylike} gave a purely computability-theoretic characterization of the patterns which admit a diverging and irreducible sub-pattern, in terms of preservation of $\omega$ hyperimmunities.
A function $f : \NN \to \NN$ is \emph{($X$-)hyperimmune} if it is not dominated by any ($X$-)computable function.

\begin{definition}
A problem~$\Psf$ admits \emph{preservation of $\omega$ hyperimmunities} if for every set~$Z$,
every countable sequence $f_0, f_1, \dots$ of $Z$-hyperimmune functions and every $Z$-computable $\Psf$-instance~$X$, there is a $\Psf$-solution~$Y$ to~$X$ such that $f_0, f_1, \dots$ are all $Y \oplus Z$-hyperimmune.
\end{definition}

This notion of preservation and some variants were introduced by Patey~\cite{patey2017iterative} who proved that the Erd\H{o}s-Moser theorem ($\EM$) admits preservation of $\omega$ hyperimmunities, while the Ascending Descending Sequence principle ($\ADS$) does not. More recently, Mimouni and Patey~\cite{mimouni2025ramseylike} characterized the Ramsey-like statements admitting preservation of $\omega$ hyperimmunities, in terms of irreducibility and divergence:

\begin{theorem}[Mimouni and Patey~\cite{mimouni2025ramseylike}]\label[theorem]{thm:preserve-hyps-irr-div}
Let $p$ be a pattern. $\RT^2_2(p)$ admits preservation of $\omega$ hyperimmunities if and only if $p$ contains a sub-pattern which is simultaneously diverging and irreducible.
\end{theorem}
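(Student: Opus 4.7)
The plan is to prove both directions separately. The forward direction uses a sub-pattern reduction together with a forcing construction, while the backward direction is handled by the contrapositive, reducing to the known failure of preservation for $\ADS$.

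Suppose first that $p$ has a divergent irreducible sub-pattern $q$. Since any realization of $p$ contains a realization of~$q$, every infinite set $f$-avoiding $q$ also $f$-avoids $p$; hence $\RCA_0\vdash \RT^2_2(q)\to\RT^2_2(p)$, and any $\RT^2_2(q)$-solution is directly an $\RT^2_2(p)$-solution, so preservation of $\omega$ hyperimmunities transfers. It therefore suffices to treat $p$ itself divergent and irreducible. I would set up a Mathias-style forcing generalizing Patey's proof for $\EM$: conditions are pairs $(F,X)$ with $F$ finite, $X$ infinite, $F<X$, and every finite $E$ with $F\subseteq E\subseteq F\cup X$ $f$-avoiding $p$. \emph{Divergence} is what guarantees that appending a single point to $F$ does not force all incident edges to be of a fixed color, so extension candidates stay abundant; \emph{irreducibility} is what prevents any realization of $p$ from splitting, via $\uplus$, between a part in $F$ and a part in the reservoir, so the reservoir can be thinned to remain infinite. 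Together they yield the key largeness lemma: every condition admits, for every $x \in X$, an extension adding $x$ to~$F$. Preservation of $\omega$ hyperimmunities is then obtained by alternating this combinatorial forcing with the standard $\Sigma^0_2$-preservation moves for hyperimmunity: for each functional $\Phi_e$ and each $f_i$ in the prescribed sequence, either diagonalize to force $\Phi_e^G\neq f_i$, or argue that no condition can force $\Phi_e^G$ to dominate $f_i$, exploiting that reservoirs remain computable in the relevant parameter.

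For the converse, assume no sub-pattern of $p$ is simultaneously divergent and irreducible. An induction on the join/convergence structure of $p$ shows that finding an infinite $f$-avoiding set for $p$ can be decomposed using the Mimouni--Patey $\uplus$-reduction into avoidance problems for smaller pieces, until one reaches atomic pieces, each of which must be convergent. For a convergent pattern $q\rhd i$, avoiding $q\rhd i$ on an infinite set essentially forces an $\ADS$-like structure: one can arrange an associated computable linear order whose ascending and descending sequences all yield avoidance sets. Since Patey proved that $\ADS$ does not preserve $\omega$ hyperimmunities, lifting a witnessing computable linear order through these reductions produces a computable coloring whose infinite $p$-avoiding sets all destroy some prescribed hyperimmune function, thereby refuting preservation for $\RT^2_2(p)$.

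The hardest step will be the combinatorial largeness lemma underlying the forcing: showing that for every condition $(F,X)$ one can thin $X$ to an infinite $X'\subseteq X$ in which each $x\in X'$ may be freely appended to $F$ while preserving both $p$-avoidance of $F\cup\{x\}$ and the global closure required of a condition. This demands an internal Ramsey-type pigeonhole on $f$ restricted to~$X$, and it is here that divergence and irreducibility are jointly essential: divergence to control the local edge pattern at the newly added point, and irreducibility to forbid global realizations of~$p$ straddling $F$ and the reservoir. Verifying that these two hypotheses are at once jointly sufficient for preservation and individually necessary to avoid the $\ADS$ reduction is the technical heart of the equivalence.
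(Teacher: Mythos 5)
Note first that the paper does not actually prove this theorem: it is imported from Mimouni and Patey~\cite{mimouni2025ramseylike}, so there is no in-text proof to compare against. Evaluating your sketch on its own terms, the overall skeleton is sound in outline. The sub-pattern transfer is correct: if $q$ is a sub-pattern of $p$, any set $f$-avoiding $q$ also $f$-avoids $p$, so preservation of $\omega$ hyperimmunities transfers from $\RT^2_2(q)$ to $\RT^2_2(p)$, and the forward direction reduces to the case that $p$ itself is divergent and irreducible. The backward direction via decomposition into convergent atomic pieces and an $\ADS$-type reduction is in the same spirit as the machinery the paper develops around this theorem, namely \Cref{prop:separable-no-diverging-irreducible} (patterns without a divergent irreducible sub-pattern are exactly the separable permutations among permutations) and \Cref{prop:separable-trivial-ads} (any set $\L$-avoiding a separable permutation computes an $\L$-monotonous sequence), which together let one refute preservation by feeding in a $\SADS$ instance with bi-hyperimmune $\omega$-part.

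The genuine gap is in the forward direction. The Mathias condition as you wrote it is degenerate: requiring every finite $E$ with $F \subseteq E \subseteq F \cup X$ to $f$-avoid $p$ is equivalent to requiring that $F \cup X$ itself $f$-avoid $p$, in which case $X$ is already an infinite $f$-avoiding set and no forcing is needed. A workable notion of condition (compare the $\EM$ argument in Patey~\cite{patey2017iterative}) must ask strictly less of the reservoir, typically that $F$ and $F \cup \{x\}$ $f$-avoid $p$ for every $x \in X$, supplemented by a structural interval-type hypothesis guaranteeing extendability, and only then does the largeness lemma become both nontrivial and plausible. Moreover, the stated heuristics about where divergence and irreducibility enter are not convincing as written: irreducibility in the $\uplus$ sense does not directly forbid a realization of $p$ from straddling $F$ and the reservoir (such a realization need not decompose as a join at the interface), and the asserted role of divergence in keeping extension candidates abundant is not substantiated. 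You rightly flag the largeness lemma as the crux, but since you neither prove it nor set up a forcing notion in which it could hold, the forward direction as written does not go through.
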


We now prove that this class of patterns corresponds exactly to the separable permutations.

\begin{proposition}\label[proposition]{prop:separable-no-diverging-irreducible}
A permutation is separable if and only if it does not contain any sub-pattern which is simultaneously diverging and irreducible.
\end{proposition}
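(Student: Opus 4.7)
The plan is to combine the Bose--Buss--Lubiw characterization (\Cref{thm:characterization-sep-bose}) with the reformulations $\pi \oplus \sigma = (\pi \rhd 0) \uplus \sigma$ and $\pi \ominus \sigma = (\pi \rhd 1) \uplus \sigma$ recorded just above the proposition. Both directions reduce to a single pair of facts: every separable permutation is either convergent or reducible, and both $1302$ and $2031$ are diverging and irreducible.

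For the direction $(\Leftarrow)$, I would exhibit a diverging and irreducible sub-pattern inside any non-separable permutation. By \Cref{thm:characterization-sep-bose}, such a permutation contains $1302$ or $2031$ as a sub-pattern, so it suffices to verify that each of these two size-$4$ patterns is itself both diverging and irreducible. Divergence is immediate from the pattern data: the edges incident to the largest index take both colors in each case. For irreducibility I would run through a short finite case check: a join decomposition of a size-$4$ pattern into two factors of size $\geq 2$ must split at position $k \in \{1, 2\}$, and in each of the four resulting cases some crossing edge fails to agree with the boundary edge ending at $k$, ruling the decomposition out.

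For the direction $(\Rightarrow)$, I first note that sub-patterns of a separable permutation are again separable permutations: a sub-pattern of a permutation is a permutation (it still avoids the non-transitivity patterns) and it still avoids $1302$ and $2031$, so \Cref{thm:characterization-sep-bose} applies again. Hence it suffices to show that every separable permutation is either convergent or reducible, which I would prove by structural induction on the separability tree. The base case is the trivial permutation $0$ of size $1$, which is convergent as the empty pattern $\rhd\, i$. For the inductive step I decompose a nontrivial separable $\tau$ as $\pi \oplus \sigma$ or $\pi \ominus \sigma$ with $\pi, \sigma$ separable of size $\geq 1$. If $|\sigma| \geq 2$, the two identities above realize $\tau$ as a join of patterns of size $\geq 2$, so $\tau$ is reducible. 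If $|\sigma| = 1$, joining with the size-$1$ trivial pattern has no effect, so $\tau$ collapses to $\pi \rhd 0$ or $\pi \rhd 1$ and is convergent.

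The main thing to be careful about is this boundary case $|\sigma| = 1$ in the inductive step: one has to notice that $p \uplus q = p$ whenever $|q| = 1$, because the join has size $|p|+|q|-1 = |p|$ and simply identifies $q$'s unique vertex with $p$'s last. Once this is recorded, both directions reduce to unwinding the definitions and the finite case analysis for $1302$ and $2031$.
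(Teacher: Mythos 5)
Your proof is correct, and the $(\Leftarrow)$ direction is the same as the paper's (both reduce, via the Bose--Buss--Lubiw characterization, to checking that $1302$ and $2031$ are diverging and irreducible). The $(\Rightarrow)$ direction, however, is genuinely different. The paper proves it by a direct index-chasing argument: given a diverging irreducible permutation $x_0,\dots,x_n$ with $x_{n-1}<x_n$, it picks the largest $i$ with $x_i>x_n$, then the $j$ in $\{i+1,\dots,n-1\}$ minimizing $x_j$, and uses irreducibility to extract a $k\le i$ with $x_j<x_k<x_n$; one then checks that $(x_k,x_i,x_j,x_n)$ realizes $1302$. Your proof instead shows that every separable permutation is convergent or reducible, by structural induction on the separability tree using the identities $\pi\oplus\sigma=(\pi\rhd 0)\uplus\sigma$ and $\pi\ominus\sigma=(\pi\rhd 1)\uplus\sigma$, and then appeals to closure of separability under sub-patterns (which again follows from Bose--Buss--Lubiw). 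Your route avoids any manipulation of explicit indices and makes it transparent \emph{why} the dichotomy between separable and non-separable permutations lines up with the convergent-or-reducible dichotomy; the paper's route is more constructive and gives a procedure to extract the forbidden sub-pattern. The one subtlety in your version, which you correctly flag, is the boundary case $|\sigma|=1$, where the join collapses ($p\uplus q=p$ when $|q|=1$, since the join has size $|p|$ and the conditions $y>|p|-1$, $x\ge|p|-1$ are then never met) and $\pi\oplus\sigma$ or $\pi\ominus\sigma$ must be recognized directly as $\pi\rhd 0$ or $\pi\rhd 1$, i.e.\ convergent rather than reducible.
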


\begin{proof}
It is clear that the permutations $2031$ and $1302$ are diverging and irreducible.
Thus, every permutation without a diverging irreducible sub-pattern is separable.

For the other inclusion, it is sufficient to show that any diverging irreducible permutation pattern admits $2031$ or $1302$ as a sub-pattern. Thus, consider such a permutation $x_0, \dots, x_n$ and assume by symmetry that $x_{n-1} < x_n$ (the case $x_{n-1} > x_n$ yields the dual sub-pattern). Since this permutation is diverging, there exists $i$ such that $x_i > x_n$ and consider the biggest such $i$. Then, consider $j \in \{i+1, \dots, n-1\}$ such that $x_j$ is the smallest possible.

As the permutation is irreducible, it cannot be written as the join of $\{x_0, \dots,\allowbreak x_{i}, x_{i+1}\}$ and $\{x_{i+1}, \dots, x_n\}$, meaning that there exists some element $x_k$ for $k \leq i$ that is bigger than one element of $\{x_{i+1}, \dots, x_n\}$ and smaller than one other, thus $x_j < x_k < x_n$ (by definition of $x_i$, $x_n$ is the biggest element of that set). It is not possible to have $k = i$, as $x_i > x_n$, thus $k < i < j < n$ and the permutation type of $x_k, x_i, x_j, x_n$ is $1302$.
\end{proof}

\begin{corollary}\label[corollary]{cor:sep-permutation-hyperimmunities}
Let $p$ be a pattern. $\RT^2_2(p)$ admits preservation of $\omega$ hyperimmunities if and only if $p$ is not a separable permutation.
\end{corollary}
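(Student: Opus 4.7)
The plan is to chain together the previous proposition with the Mimouni--Patey theorem, splitting on whether $p$ is a permutation at all. The only mildly non-trivial point is that the proposition is stated only for permutations, whereas the corollary asserts the equivalence for arbitrary patterns, so some care is needed in the ``not separable'' direction when $p$ fails to be a permutation.

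First I would handle the easy direction. If $p$ is a separable permutation, then by \Cref{prop:separable-no-diverging-irreducible}, $p$ has no sub-pattern that is simultaneously diverging and irreducible (note that every sub-pattern of a permutation is itself a permutation, so the proposition does cover all sub-patterns of $p$). Thus by \Cref{thm:preserve-hyps-irr-div}, $\RT^2_2(p)$ does not admit preservation of $\omega$ hyperimmunities.

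For the converse, suppose $p$ is not a separable permutation. I split into two cases. If $p$ is a (non-separable) permutation, then \Cref{prop:separable-no-diverging-irreducible} directly provides a diverging irreducible sub-pattern of $p$, and \Cref{thm:preserve-hyps-irr-div} delivers preservation of $\omega$ hyperimmunities. If $p$ is not a permutation, then by definition $p$ contains one of the two non-transitivity patterns of (\ref{eq:non-transitivity}) as a sub-pattern; it therefore suffices to observe that each such size-$3$ pattern is both diverging and irreducible. This is a quick check: writing either non-transitivity pattern as $q : [3]^2 \to 2$, one has $q(0,2) \neq q(1,2)$, so $q$ is not of the form $q' \rhd i$ and hence is diverging; and $q(0,1) \neq q(0,2)$, so $q$ cannot be written as the join of two size-$2$ patterns (which would force $q(0,1) = q(0,2)$), hence $q$ is irreducible. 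Applying \Cref{thm:preserve-hyps-irr-div} once more completes the proof.

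The main obstacle, if any, is the bookkeeping in the non-permutation case: one must confirm that \Cref{thm:preserve-hyps-irr-div} is really being fed a diverging irreducible \emph{sub-pattern of $p$}, which is why I verify irreducibility and divergence directly from the definitions on the size-$3$ non-transitivity patterns rather than appealing to the proposition (which only speaks of permutations).
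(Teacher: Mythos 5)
Your proposal is correct and matches the paper's argument: both directions go through \Cref{thm:preserve-hyps-irr-div} and \Cref{prop:separable-no-diverging-irreducible}, and the non-permutation case is handled by observing that the non-transitivity patterns are diverging and irreducible. The paper states this last fact without checking it; your direct verification from the definitions of $\rhd$ and $\uplus$ is a small but welcome addition.
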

\begin{proof}
Immediate by \Cref{thm:preserve-hyps-irr-div} and \Cref{prop:separable-no-diverging-irreducible}. Note that if a pattern is not a permutation, then it contains one of two non-transitivity patterns (see (\ref{eq:non-transitivity})), which are both divergent and irreducible.
\end{proof}

The following corollary is a consequence of the fact that $\RT^2_2(p)$ preserves $\omega$ hyperimmunities for every separable permutation~$p$, while $\RT^2_2$ does not, and uses a general construction over preservation properties (see \cite[Section 3.4, Lemma 3.4.2]{patey2016reverse}) We however reprove it in the particular case of hyperimmunities, for the sake of completeness.

Hyperimmune degrees can also be defined in terms of sets. The canonical code of a finite set~$F$ is the integer $\sum_{n \in F} 2^n$.
A \emph{c.e.} array is a c.e.\ sequence of pairwise disjoint, canonically coded, non-empty finite sets. An infinite set~$A \subseteq \NN$ is \emph{($X$-)hyperimmune} if for every ($X$-)c.e.\ array $(F_n : n \in \NN)$, $F_n \cap A = \emptyset$ for some~$n \in \NN$.
Equivalently, an infinite set~$A$ is $X$-hyperimmune if its principal function is $X$-hyperimmune, where the principal function of~$A$ is the function $p_A : \NN \to \NN$ which to $n$ associates its $n$th element.

\begin{corollary}\label[corollary]{cor:sep-permutation-not-rt22}
Let $p$ be a pattern that is not a separable permutation. Then $\RT^2_2(p)$ does not imply $\RT^2_2$ over $\omega$-models.
\end{corollary}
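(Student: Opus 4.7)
The plan is to exploit the gap in preservation of $\omega$ hyperimmunities. By \Cref{cor:sep-permutation-hyperimmunities}, $\RT^2_2(p)$ admits preservation of $\omega$ hyperimmunities whenever $p$ is not a separable permutation. On the other hand, $\RT^2_2$ is known to fail this preservation property: there exist a computable coloring $c : [\NN]^2 \to 2$ and a hyperimmune set $A$ such that every infinite $c$-homogeneous set computes a function dominating the principal function of $A$, by a classical Jockusch-style priority construction (see, e.g., Patey~\cite{patey2017iterative}). This is the only external input the plan needs.

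Given these two ingredients, I would construct the second-order part of an $\omega$-model $\mathcal{M}$ of $\RCA_0 + \RT^2_2(p)$ as an increasing union of countable Turing ideals $\mathcal{I}_0 \subseteq \mathcal{I}_1 \subseteq \cdots$, arranging that a fixed countable list $f_0, f_1, \dots$ of hyperimmune functions (including the principal function of $A$) remains hyperimmune relative to every set added. I enumerate pairs $\langle n, e \rangle$: at stage $\langle n, e \rangle$ I look at the $e$th Turing functional applied to a fixed generator $X_n$ of $\mathcal{I}_n$, check whether it defines an $\RT^2_2(p)$-instance $g$, and if so invoke \Cref{cor:sep-permutation-hyperimmunities} with parameter $Z = X_n$ to adjoin a solution $Y$ such that every $f_i$ remains $X_n \oplus Y$-hyperimmune; $\mathcal{I}_{n+1}$ is then the Turing-downward closure of $X_n \oplus Y$. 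Setting $\mathcal{M}$ to have second-order part $\bigcup_n \mathcal{I}_n$ ensures that every $\RT^2_2(p)$-instance in $\mathcal{M}$ has a solution in $\mathcal{M}$, and that every $f_i$ stays hyperimmune relative to every set of $\mathcal{M}$.

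To conclude, I would note that $\mathcal{M} \not\models \RT^2_2$: the coloring $c$ lies in $\mathcal{M}$ since it is computable, yet any infinite $c$-homogeneous set in $\mathcal{M}$ would compute a function dominating the principal function of $A$, contradicting the preservation invariant. The main technical work is the diagonal bookkeeping so that every instance eventually appears and gets solved while the full countable sequence of hyperimmunities is simultaneously preserved at each stage. This is the general construction over preservation properties from \cite[Section 3.4, Lemma 3.4.2]{patey2016reverse}, specialized here to the case of hyperimmunities for the sake of self-containedness.
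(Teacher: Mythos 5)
Your overall architecture is the right one and matches the paper's: iteratively build a Turing ideal closed under $\RT^2_2(p)$ while preserving a fixed family of hyperimmune functions, invoking \Cref{cor:sep-permutation-hyperimmunities} at each stage, and conclude that some fixed computable $\RT^2_2$-instance has no solution in the resulting $\omega$-model. The gap is in the external input you rely on.

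You assert that there exist a computable coloring $c$ and a \emph{single} hyperimmune set $A$ such that every infinite $c$-homogeneous set computes a function dominating $p_A$. If that were true it would mean $\RT^2_2$ fails to preserve even \emph{one} hyperimmunity, and this is false: $\RT^2_2$ does preserve a single hyperimmune function (indeed this is part of the point of the preservation framework you cite). The obstruction to preservation for $\RT^2_2$ is intrinsically disjunctive. Concretely, if $c$ is a stable coloring with limit set $B$, then an infinite homogeneous set $H$ is (up to finitely many elements) contained in $B$ or in $\overline{B}$, and $p_H$ eventually dominates $p_B$ in the first case and $p_{\overline{B}}$ in the second; a single reference function cannot be dominated by solutions on both sides simultaneously. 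So as written, your conclusion \qt{any infinite $c$-homogeneous set in $\mathcal{M}$ would compute a function dominating $p_A$} does not hold, and the contradiction at the end does not go through.

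The fix, which is what the paper does, is to take a $\Delta^0_2$ set $A$ which is \emph{bi}-hyperimmune (both $A$ and $\overline{A}$ hyperimmune), and use Shoenfield's limit lemma to produce a computable stable coloring $f$ with $\lim_y f(x,y) = A(x)$. Then every infinite $f$-homogeneous set $H$ is almost contained in $A$ or in $\overline{A}$, hence kills the $H$-hyperimmunity of one of $p_A$, $p_{\overline{A}}$. You then run exactly the bookkeeping you describe, but preserving \emph{both} $p_A$ and $p_{\overline{A}}$ simultaneously at every stage — which is fine, since \Cref{cor:sep-permutation-hyperimmunities} preserves countably many hyperimmunities at once. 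With that replacement your argument becomes the paper's.
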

\begin{proof}
First, notice that $\RT^2_2$ does not even preserve 2 hyperimmunities.
Indeed, let $A$ be a $\Delta^0_2$ set which is bi-hyperimmune (both $A$ and $\overline{A}$ are hyperimmune). By Shoenfield's limit lemma, there is a computable function $f : [\NN]^2 \to 2$ such that for every~$x \in \NN$, $\lim_y f(x, y) = A(x)$. Consider $f$ as a (stable) computable instance of~$\RT^2_2$. Note that every infinite $f$-homogeneous set~$H$ is a subset of~$A$ or of $\overline{A}$, in which case either~$A$ or $\overline{A}$ is not $H$-hyperimmune.

We now construct an $\omega$-model~$\M$ of $\RT^2_2(p)$ such that for every~$X \in \M$, $A$ and $\overline{A}$ are both $X$-hyperimmune. Since $f$ is computable, $f \in \M$, but $\M$ does not contain any infinite $f$-homogeneous set, so $\M \not \models \RT^2_2$.
More precisely, we construct an infinite sequence $Z_0 \leq_T Z_1 \leq_T \dots$ of sets such that for every~$n \in \NN$,
\begin{itemize}
    \item[(1)] If~$n = \langle a, b\rangle$ and $\Phi^{Z_a}_b$ is a coloring $g : [\NN]^2 \to 2$, then $Z_{n+1}$ computes an infinite set $g$-avoiding~$p$;
    \item[(2)] $A$ and $\overline{A}$ are both~$Z_n$-hyperimmune.
\end{itemize}
First, let $Z_0 = \emptyset$. Assuming $Z_n$ is defined, letting $n = \langle a, b \rangle$, if $\Phi^{Z_a}_b$ is not a coloring, then let $Z_{n+1} = Z_n$. Otherwise,  by \Cref{cor:sep-permutation-hyperimmunities}, there is an infinite set~$H$ $\Phi^{Z_a}_b$-avoiding~$p$ such that $A$ and $\overline{A}$ are both $H \oplus Z_n$-hyperimmune. Let $Z_{n+1} = H \oplus Z_n$. This completes our construction.
Let $\M$ be the $\omega$-model whose second-order part is $\{ X : \exists n\ X \leq_T Z_n \}$. By (1), $\M \models \RT^2_2(p)$, and by (2), $f$ has no homogeneous set in~$\M$, so $\M \not \models \RT^2_2$.
\end{proof}

We shall see in \Cref{sec:2dnc-sep} that the previous corollary is tight, in that if $p$ is a separable permutation, then $\RT^2_2(p)$ implies $\RT^2_2$ over $\omega$-models (\Cref{maintheorem1}).

\section{Separable permutations and minimal covers}\label[section]{sec:minimal-covers}

The statements $\RT^2_2(p)$ and $\coRT^2_2(p)$ were studied both classically and from  a reverse mathematical viewpoint for small patterns~$p$. For instance, $\coRT^2_2(120)$ corresponds the restriction of the Chain-Antichain principle to trees~\cite{cervelle2024reverse}. However, for large patterns, the situation is less clear, as the corresponding statements were not studied previously, and the induced combinatorial objects are therefore new and seem out of reach of intuition.

To guide their computability-theoretic study, it is therefore natural to consider the known properties common to all small patterns, and try to generalize them to all sizes. Let us consider separable permutations of small size. By \Cref{thm:characterization-sep-bose}, every permutation of size at most 3 is separable. The only permutations of size~2 are $01$ and $10$, and correspond to constant patterns. Recall that the statement $\RT^2_2(p)$ for a constant pattern is combinatorially false, so the statements $\RT^2_2(01)$ and $\RT(10)$ are of no interest. The Ramsey-like statements for permutations of size~3 were studied by multiple authors, and summarized in Cervelle, Gaudelier and Patey~\cite[Section 1.2]{cervelle2024reverse}.
There are 6 permutations of size~3, which can be grouped by duals: the permutations $012 = 01 \uplus 01$ and $210 = 10 \uplus 10$ which correspond to constant patterns, the permutations $021 = 01 \uplus 10$ and $201 = 10 \uplus 01$ which are reducible and diverging, and the permutations $120$ and $102$, which are irreducible and converging. 

By Mimouni and Patey~\cite{mimouni2025ramseylike}, the statement $\RT^2_2(p \uplus q)$ is equivalent to the existence, for every coloring $f : [\NN]^2 \to 2$, of an infinite set which $f$-avoids~$p$ or~$q$. Therefore, $\RT^2_2$ is computably reducible to $\RT^2_2(p)$ for every reducible permutation of size~3, and the main cases of interest are for the irreducible permutations $120$ and $102$.
Since $\RT^2_2(120 \uplus 102)$ is equivalent to the statement \qt{for every coloring $f : [\NN]^2 \to 2$, there is an infinite set $f$-avoiding either $120$ or $102$}, it follows that $\RCA_0 \vdash \RT^2_2(120 \uplus 102) \wedge \coRT^2_2(120) \wedge \coRT^2_2(102) \to \RT^2_2$.
Cervelle, Gaudelier and Patey~\cite{cervelle2024reverse} proved that $\RCA_0 \vdash \RT^2_2(120 \uplus 102) \to \ADS$, and that $\RCA_0 \vdash \ADS \to (\coRT^2_2(120) \wedge \coRT^2_2(102))$. Thus $\RCA_0 \vdash \RT^2_2(120 \uplus 102) \to \RT^2_2$. To summarize, all the statements $\RT^2_2(p)$ for separable permutations~$p$ of size at most~3 are known to imply~$\ADS$ over $\RCA_0$ with only one application.

We now prove that $\SepRT^2_2$ implies $\ADS$ over~$\omega$-models. In some sense, one can think of $\SepRT^2_2$ as $\RT^2_2(\biguplus P)$ where $P$ is the set of all separable permutations. This intuition has two limits: first, the operator $\biguplus$ yields infinite patterns when applied on infinite sets; second, in non-standard models, a solution to an instance of $\SepRT^2_2$ may avoid some separable patterns of non-standard size. However, for a finite set~$P$ of standard separable permutations, the permutation $\biguplus P$ is separable and of standard size, and the proof can be formalized over~$\RCA_0$ to obtain $\RCA_0 \vdash \RT^2_2(\biguplus P) \to \ADS$.

\begin{proposition}\label[proposition]{prop:separable-trivial-ads}
Let $\L = (\NN, <_\L)$ be a computable linear order avoiding a separable permutation~$p$.
Then there is an infinite computable $\L$-ascending or $\L$-descending sequence.
\end{proposition}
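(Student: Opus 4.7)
The plan is to proceed by induction on $|p|$. The base case $|p|=2$ is immediate: if $p=01$, every pair $x<_\NN y$ is forced to satisfy $y<_\L x$, so the natural enumeration $0,1,2,\dots$ is $\L$-descending, and symmetrically if $p=10$.

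For the inductive step $|p|\geq 3$, separability gives $p = q \odot r$ with $\odot \in \{\oplus,\ominus\}$ and $q,r$ separable. Reversing $\L$ and complementing $p$ preserves separability and swaps $\oplus \leftrightarrow \ominus$, so I may assume $p = q \oplus r$. Whenever $p$ admits such a decomposition with $|r|\geq 2$, I exploit the identity $p = (q \rhd 0) \uplus r$ and apply the Mimouni--Patey reduction from~\cite{mimouni2025ramseylike} to the instance $\NN$: it produces a computable infinite $G \subseteq \NN$ such that $\L \restriction G$ avoids one of $q \rhd 0 = q \oplus 0$ or $r$. Both are separable permutations (since $\oplus$ preserves separability) of size strictly smaller than $|p|$ --- the bound $|q \rhd 0| = |q|+1 < |q|+|r|$ uses exactly $|r|\geq 2$ --- so the induction hypothesis applied to $\L \restriction G$ supplies the desired sequence.

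The genuinely new case is $p = q \oplus 0$ with $q$ separable of size $|p|-1 \geq 2$ (for instance $p = 102 = 10 \oplus 0$), for which the $\uplus$-reduction is trivial. If $\L$ avoids $q$, the IH on $q$ finishes, so I may assume $\L$ realizes $q$ and pick a $q$-realization $X^{(0)}$. I then iteratively search, given $X^{(k)}$, for a $q$-realization $X^{(k+1)}$ inside the computable set $Y^{(k)} := \{y : y >_\NN \max_\NN X^{(k)}\}$. The avoidance of $p = q \oplus 0$ forces every $y \in Y^{(k)}$ to satisfy $y <_\L \max_\L X^{(k)}$, for otherwise $X^{(k)} \cup \{y\}$ would realize $p$; thus whenever the iteration extends, $(\max_\L X^{(k)})_{k \geq 0}$ is automatically $\NN$-ascending and $\L$-descending. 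Either the iteration continues forever and this sequence is itself a computable $\L$-descending sequence, or it halts at some stage $k$ because $\L \restriction Y^{(k)}$ avoids $q$, in which case the IH on $q$ applied to $\L \restriction Y^{(k)}$ supplies a computable sequence inside $Y^{(k)}$. The main subtlety, and the principal obstacle, is that one need not decide between these two alternatives effectively: the statement asserts only the \emph{existence} of a computable sequence, and both branches produce one. This case arises precisely when $p$'s only $\oplus$-decomposition pins a singleton on the right, as happens for $102$ and its $\ominus$-dual $201$.
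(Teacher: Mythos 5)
Your proof is correct, but it takes a genuinely different route from the paper's. The paper proves the contrapositive by structural induction: assuming $\L$ has no computable monotone sequence, it shows $p = p_0 \oplus p_1$ appears, splitting on whether some $p_0$-realization $F_0$ has infinitely many elements $\L$-above $\max_\L F_0$ (restrict $\L$-above and apply the IH to find $p_1$) or every such realization has only finitely many (iteratively build an $\L$-descending sequence of $\max_\L F^i$'s, contradiction). This handles all decompositions uniformly, with no appeal to the Mimouni--Patey $\uplus$-reduction and no case split on the size of the right summand. You instead prove the statement directly and split on $|r|$: for $|r| \geq 2$ you black-box the Mimouni--Patey reduction to shrink to a computable $G$ avoiding a strictly smaller separable pattern, which cleanly discharges that case but imports an external theorem; for $|r| = 1$ you carry out by hand an iteration that is essentially the paper's Case~2, using the tighter observation that avoidance of $q \oplus 0$ forces \emph{every} element of the $\NN$-tail $Y^{(k)}$ below $\max_\L X^{(k)}$ (rather than all but finitely many, as the paper's assumption yields). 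The net effect is that your argument is modular and short where Mimouni--Patey applies, but the degenerate case $|r|=1$ is unavoidable and is where the real combinatorial work happens in both proofs. One small point worth noting: the paper needs its restriction at step $i$ to be both $\NN$-above $F^j$ and $\L$-below $x_j$, while your $Y^{(k)}$ is only $\NN$-restricted; your argument that the $\L$-restriction is automatic when $|r|=1$ is correct, and is precisely why you need the separate case $|r|\ge 2$.
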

\begin{proof}
We show by induction on~$p$ that for every computable linear order $\L = (\NN, <_\L)$ without any infinite computable $\L$-increasing or $\L$-decreasing sequence, the pattern $p$ appears.
More precisely, we proceed by structural induction on the construction of $p$ by a series of direct and skew summation, starting from the trivial permutation with one element.
This statement holds for the trivial permutation. The direct sum and the skew sum playing a dual role, we prove the inductive step only for the direct sum.

Let $p$ be the direct sum of two permutations $p_0$ and $p_1$ satisfying the induction hypothesis and let $\L = (\NN,<_\L)$ be a computable linear order without any infinite computable $\L$-increasing or $\L$-decreasing sequence, hence both of these pattern appear in~$\L$.

If the pattern $p_0$ appears in~$\L$ at position $F_0$ with the element $x_0 = \max_\L F_0$ such that the set $\{x \in \NN : x >_\L x_0\}$ is infinite, then we are done, as we can consider the induced order $(\{x \in \NN : x >_\L x_0 \wedge x > F_0\}, <_\L)$ and apply the induction hypothesis on it (by first taking a computable bijection between this order and one having $\NN$ as base set), thus, the pattern $p_1$ appears in that order at some position $F_1$, and by construction, $F_0 < F_1$ and $F_0 <_\L F_1$ holds, thus $F_0 \cup F_1$ $\L$-realizes $p_0 \oplus p_1$.

If for every set $F_0$ $\L$-realizing $p_0$, we have that $x_0 = \max_\L F_0$ satisfies that $\{x \in \NN : x >_{\L} x_0\}$ is finite, then we can contradict our assumption on the order by computing an infinite $\L$-decreasing sequence. Indeed, under our assumptions, there exists an infinite computable sequence $F^0 < F^1 < \dots$ of sets realizing $p_0$ and such that the sequence $(x_i)_{i \in \NN}$, where $x_i = \max_{\L} F^i$ is infinite and $\L$-decreasing. 

Such a sequence can be defined as follows: for every $i \in \NN$, assuming that $F_j$ has been defined for every $j < i$, the set $\bigcup_{j < i} \{x \in \NN : x >_{\L} x_j\}$ is finite, as it is equal to $\{x \in \NN : x >_{\L} x_{i-1}\}$ if $i > 0$ and to $\emptyset$ if $i = 0$, hence its complement is infinite and computable and we can apply the induction hypothesis on the induced order $(\{x \in \NN : \forall j < i, x <_{\L} x_j \wedge x > F^j\}, <_{\L})$ and computably find an instance of $p_0$ at a position $F^i$ in that ordering. By construction, $x_i = \max F^i <_\L x_j$ and $F^i > F^j$ for every $j < i$.

\end{proof}

\begin{corollary}
$\RCA_0 + \SepRT^2_2$ implies $\ADS$ over $\omega$-models.
\end{corollary}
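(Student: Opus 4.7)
The plan is to convert any instance of $\ADS$ into an instance of $\SepRT^2_2$ via the canonical transitive coloring, and then apply the relativization of \Cref{prop:separable-trivial-ads} to the resulting set.

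Fix an $\omega$-model $\M$ of $\RCA_0 + \SepRT^2_2$ and an infinite linear order $\L = (\NN, <_\L) \in \M$. Define $f : [\NN]^2 \to 2$ by $f(x, y) = 0$ if $x <_\L y$ and $f(x, y) = 1$ otherwise, for $x <_\NN y$. Since $f \leq_T \L$, we have $f \in \M$. Applying $\SepRT^2_2$ inside $\M$ yields an infinite set $H \in \M$ and a separable permutation $p$ such that $H$ $f$-avoids $p$. Transporting $\L\uh_H$ along the canonical order-preserving bijection between $H$ and $\NN$ produces an $H$-computable linear order on $\NN$ which avoids $p$ as a permutation pattern (note that $p$ must have size at least $2$, since singleton patterns can never be avoided, and if $|p|=2$ then avoidance of $p$ already means that $H$ is an $\L$-ascending or $\L$-descending sequence).

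The proof of \Cref{prop:separable-trivial-ads} is a structural induction on the separable decomposition of $p$ using only operations effective in the input linear order, and therefore relativizes verbatim. Applied relative to $H$, it produces an infinite $H$-computable ascending or descending sequence of $\L\uh_H$, which pulls back to an ascending or descending sequence of $\L$ itself. Since $\M$ is closed under Turing reduction, this sequence lies in $\M$, and thus $\M \models \ADS$. The only point requiring attention is the relativization of \Cref{prop:separable-trivial-ads}, which is routine; there is no real obstacle.
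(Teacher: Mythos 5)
Your argument is correct and is exactly what the paper means by "an immediate relativization of Proposition 3.1": encode the linear order as a transitive coloring, apply $\SepRT^2_2$ to get $H$ avoiding some separable permutation, then run the relativized proposition on the induced order on $H$. The only (harmless) imprecision is that the transported order is $(H\oplus\L)$-computable rather than just $H$-computable, which is irrelevant since $\M$ is a Turing ideal.
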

\begin{proof}
By an immediate relativization of \Cref{prop:separable-trivial-ads}.
\end{proof}

\begin{proposition}\label[proposition]{prop:separable-implies-ads}
For every separable permutation~$p$, $\RCA_0 \vdash \RT_2^2(p) \to \ADS$.
\end{proposition}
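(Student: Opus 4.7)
The plan is to reduce any $\ADS$-instance to an $\RT^2_2(p)$-instance via its canonical comparison coloring, apply $\RT^2_2(p)$ to obtain an infinite suborder avoiding the pattern $p$, and then invoke the relativization of \Cref{prop:separable-trivial-ads} on that suborder to extract the desired ascending or descending sequence.

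Concretely, given a linear order $\L = (\NN, <_\L)$ in the model, I would define the $\L$-computable coloring $f_\L : [\NN]^2 \to 2$ by $f_\L(x, y) = 0$ if $x <_\L y$ and $f_\L(x, y) = 1$ otherwise, for $x <_\NN y$; this exists in the model by $\Delta^0_1$-comprehension. Applying $\RT^2_2(p)$ to $f_\L$ yields an infinite set $H$ that $f_\L$-avoids~$p$. The key translation is that, upon identifying $H$ with $\NN$ via its monotone enumeration, the induced restriction $\L \uh H$ is an $(\L \oplus H)$-computable linear order which avoids the separable permutation pattern~$p$: any witness of $p$-realization in $\L \uh H$ would pull back along the enumeration of $H$ to a subset of $H$ $f_\L$-realizing $p$, contradicting the choice of $H$.

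Invoking \Cref{prop:separable-trivial-ads} relativized to the oracle $\L \oplus H$ then produces an $(\L \oplus H)$-computable infinite ascending or descending sequence in $\L \uh H$, which pulls back to an infinite ascending or descending sequence of~$\L$. The only delicate point is verifying that \Cref{prop:separable-trivial-ads} relativizes uniformly and is formalizable in $\RCA_0$ for the particular $p$ at hand. Since $p$ is a fixed standard separable permutation, the structural induction over its construction from direct and skew sums unfolds into a concrete finite proof tailored to~$p$; and the inner construction, in the hard case, of an infinite $\L$-decreasing sequence $F^0 < F^1 < \dots$ proceeds by a primitive recursion relative to the oracle in which each $F^i$ is obtained by applying the (already established) induction hypothesis to a $\Delta^0_1$-definable cofinite suborder. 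Both ingredients are available in $\RCA_0$ via $\Sigma^0_1$-induction and $\Delta^0_1$-comprehension, so once this bookkeeping is in place the conclusion is immediate.
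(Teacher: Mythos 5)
Your proposal is correct and takes essentially the same approach as the paper: apply $\RT^2_2(p)$ to the canonical comparison coloring to obtain an infinite set $H$ whose induced suborder avoids $p$, then formalize (for the fixed, standard $p$ in question) the argument of \Cref{prop:separable-trivial-ads} in $\RCA_0$ by unrolling the structural induction into finitely many explicit steps. The paper's own proof is very terse — it simply asserts that \Cref{prop:separable-trivial-ads} is formalizable over $\RCA_0$ once the structural induction is replaced by finitely many explicit steps for a standard $p$ — whereas you spell out the reduction to $f_\L$, the restriction to $H$, the relativization, and the inner primitive recursion, all of which are implicit in the paper's wording.
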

\begin{proof}
The proof of \Cref{prop:separable-trivial-ads} is formalizable over~$\RCA_0$, except the induction on the structure of~$p$ which is too complex because $p$ might be non-standard. If however $p$ is standard, then no induction is required on the structure of~$p$, as there are only finitely many steps.
\end{proof}

Note that in particular, for every separable permutation~$p$, $\RT^2_2(p)$ implies the cohesiveness principle ($\COH$) and the $\Sigma^0_2$-bounding scheme ($\BSig_2$) over~$\RCA_0$, as $\ADS$ implies both statements (see Hirschfeldt and Shore~\cite[Proposition 2.10, Proposition 4.5]{hirschfeldt2007combinatorial}). We shall see a direct proof of the second statement in the more general case of all permutations in \Cref{prop:rt22p-implies-bsig2}.

\begin{corollary}
Let $p$ be a pattern without divergent irreducible sub-pattern.
Then $\RCA_0 \vdash \RT^2_2(p) \to \ADS$.
\end{corollary}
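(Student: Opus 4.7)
The plan is to derive this as a short combination of two results already in hand: Proposition~\ref{prop:separable-no-diverging-irreducible} (separable permutations are exactly the permutations with no diverging irreducible sub-pattern) and Proposition~\ref{prop:separable-implies-ads} ($\RT^2_2(p) \to \ADS$ over $\RCA_0$ for separable $p$).

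First, I would observe that both non-transitivity patterns (see (\ref{eq:non-transitivity})) are simultaneously divergent and irreducible, a fact already exploited in the proof of Corollary~\ref{cor:sep-permutation-hyperimmunities}. Consequently, any pattern $p$ without divergent irreducible sub-patterns contains neither non-transitivity pattern, so $p$ is itself a permutation (recall that non-permutation patterns must contain at least one of the non-transitivity patterns).

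Next, I would apply Proposition~\ref{prop:separable-no-diverging-irreducible} to the permutation $p$: since $p$ has no diverging irreducible sub-pattern, $p$ is separable. Finally, Proposition~\ref{prop:separable-implies-ads} gives $\RCA_0 \vdash \RT^2_2(p) \to \ADS$, which is the desired conclusion.

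There is no real obstacle here; the corollary is essentially a bookkeeping consequence of the characterization of separable permutations in terms of forbidden sub-pattern shapes, together with the fact that non-permutation $\RT^2_2$-patterns are automatically ruled out by the hypothesis. The only point worth being explicit about is why the hypothesis forces $p$ to be a permutation in the first place, and this is immediate from the irreducibility and divergence of the two non-transitivity patterns.
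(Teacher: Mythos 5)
Your proposal is correct and follows the paper's proof essentially verbatim: observe that the non-transitivity patterns are divergent and irreducible (so $p$ codes a permutation), invoke Proposition~\ref{prop:separable-no-diverging-irreducible} to conclude $p$ is separable, and close with Proposition~\ref{prop:separable-implies-ads}. Nothing more to add.
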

\begin{proof}
Since the non-transitivity patterns (see (\ref{eq:non-transitivity})) are divergent and irreducible, $p$ does not contain these as sub-patterns, hence $p$ is transitive, so codes for a permutation.
By \Cref{prop:separable-no-diverging-irreducible}, $p$ is separable, hence by \Cref{prop:separable-implies-ads}, $\RCA_0 \vdash \RT_2^2(p) \to \ADS$.
\end{proof}

The known proofs of the Erd\H{o}s-Moser theorem are by many means very similar to the ones of Ramsey's theorem for pairs and two colors, apart from the disjunctive nature of~$\RT^2_2$. The lack of disjunction in~$\EM$ was exploited by Lerman, Solomon and Towsner~\cite{lerman2013separating} and Patey~\cite{patey2017iterative} to prove that $\RCA_0 + \EM$ does not imply $\RT^2_2$ over $\omega$-models. It is natural to wonder whether there exist Ramsey-like statements of intermediate strength, that is, patterns~$p$ such that $\RT^2_2(p)$ is strictly in between $\RT^2_2$ and $\EM$. The notion of minimal cover formalizes the absence of intermediate strength.

\begin{definition}
Let $(P, \leq)$ be a partial order. An element~$a \in P$ is a \emph{minimal cover} of $b \in P$ if $a > b$ and for every $c \in P$ such that $a \geq c \geq b$, either $c = a$, or $c = b$. An element~$a \in P$ is a \emph{strong minimal cover} of~$b \in P$ if $a > b$ and for every~$c \in P$ such that $a > c$, then $b \geq c$.
\end{definition}

Thanks to \Cref{thm:characterization-sep-bose} and \Cref{prop:separable-implies-ads}, one obtains the following very useful trichotomy.

\begin{proposition}\label[proposition]{prop:trichotomy-ads-em-3142}
Let $p$ be a pattern. Then one of the following holds: 
\begin{itemize}
    \item[(a)] $\RCA_0 \vdash \RT^2_2(p) \to \ADS$;
    \item[(b)] $\RCA_0 \vdash \EM \to \RT^2_2(p)$;
    \item[(c)] $\RCA_0 \vdash \RT^2_2(1302) \to \RT^2_2(p)$.
\end{itemize}
\end{proposition}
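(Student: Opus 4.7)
The plan is a straightforward case analysis on the combinatorial type of $p$, invoking the results already established in this section together with \Cref{thm:characterization-sep-bose}. The key observation I would use throughout is the elementary sub-pattern monotonicity principle: whenever $q$ is a sub-pattern of $p$, $\RCA_0 \vdash \RT^2_2(q) \to \RT^2_2(p)$, because any infinite set $f$-avoiding $q$ automatically $f$-avoids $p$ (an $f$-realization of $p$ would contain an $f$-realization of $q$).

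First I would handle the case where $p$ is not a permutation. By the one-to-one correspondence between permutations of size $\ell$ and $\RT^2_2$-patterns of size $\ell$ avoiding both non-transitivity patterns from (\ref{eq:non-transitivity}), such a $p$ must contain one of these two non-transitivity patterns as a sub-pattern. Since $\EM$ is equivalent to the statement that every coloring admits an infinite set avoiding both non-transitivity patterns, every $\EM$-solution $f$-avoids that sub-pattern and hence $f$-avoids $p$ by the monotonicity observation, placing us in case (b).

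If $p$ is a permutation, I would further split on separability. When $p$ is separable, \Cref{prop:separable-implies-ads} gives case (a) directly. When $p$ is non-separable, \Cref{thm:characterization-sep-bose} guarantees that $p$ contains either $1302$ or $2031$ as a sub-pattern. Since $1302$ and $2031$ are duals, $\RCA_0 \vdash \RT^2_2(1302) \leftrightarrow \RT^2_2(2031)$ (as already used in the proof of \Cref{cor:non-sep-finite-basis}), so in both situations sub-pattern monotonicity yields $\RCA_0 \vdash \RT^2_2(1302) \to \RT^2_2(p)$, which is case (c).

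There is no substantial obstacle here: the proposition is really a packaging of \Cref{thm:characterization-sep-bose} and \Cref{prop:separable-implies-ads} through the sub-pattern monotonicity principle. The only moment requiring a small verification is that every non-permutation pattern contains a non-transitivity sub-pattern, which is immediate from the definitions, and that the three cases above are exhaustive, which they are since any pattern is either not a permutation, a separable permutation, or a non-separable permutation.
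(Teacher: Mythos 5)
Your proof is correct and follows the same case analysis as the paper: not-a-permutation gives case (b) via the non-transitivity patterns, separable gives case (a) via \Cref{prop:separable-implies-ads}, and non-separable gives case (c) via \Cref{thm:characterization-sep-bose} and duality. The only cosmetic difference is that you isolate the sub-pattern monotonicity observation as an explicit reusable lemma, whereas the paper inlines the same reasoning in each branch.
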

\begin{proof}
If $p$ does not code for a permutation, it contains one of the two non-transitivity patterns as sub-pattern (see (\ref{eq:non-transitivity})). Thus, given a coloring $f : [\NN]^2 \to 2$, every infinite $f$-transitive set $f$-avoids~$p$, so $\RCA_0 \vdash \EM \to \RT^2_2(p)$.
If $p$ codes for a permutation, then by \Cref{thm:characterization-sep-bose}, either it is separable, or it contains any of $1302$ and $2031$ as sub-pattern. In the first case, by \Cref{prop:separable-implies-ads}, $\RCA_0 \vdash \RT^2_2(p) \to \ADS$. In the second case, either $\RCA_0 \vdash \RT^2_2(1302) \to \RT^2_2(p)$, or $\RCA_0 \vdash \RT^2_2(2031) \to \RT^2_2(p)$, but since $1302$ and $2031$ are dual, $\RT^2_2(1302)$ and $\RT^2_2(2031)$ are equivalent over~$\RCA_0$, hence $\RCA_0 \vdash \RT^2_2(1302) \to \RT^2_2(p)$.
\end{proof}

\begin{corollary}\label[corollary]{cor:em-minimal-cover}
$\RT^2_2$ is a minimal cover of~$\EM$ in the set of Ramsey-like statements, partially ordered by implication over~$\RCA_0$.
\end{corollary}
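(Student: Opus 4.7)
The plan is to establish the corollary in two components: the strict separation $\RT^2_2 > \EM$ over $\RCA_0$, and the absence of any strictly intermediate Ramsey-like statement. The strict separation is a classical result of Lerman, Solomon and Towsner~\cite{lerman2013separating} (see also Patey~\cite{patey2017iterative}), and alternatively follows from the $\omega$-hyperimmunity preservation argument in the proof of \Cref{cor:sep-permutation-not-rt22}: $\EM$ preserves $\omega$ hyperimmunities, while $\RT^2_2$ does not even preserve $2$ hyperimmunities.

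For the non-intermediacy component, I would suppose $\EM \leq \RT^2_2(p) \leq \RT^2_2$ over $\RCA_0$ for some pattern~$p$, and apply the trichotomy of \Cref{prop:trichotomy-ads-em-3142}. In case~(a), where $p$ is a separable permutation so that $\RT^2_2(p) \to \ADS$, combining with the hypothesis $\RT^2_2(p) \to \EM$ and the $\RCA_0$-provable implication $\EM + \ADS \to \RT^2_2$ --- obtained by extracting a transitive subset via $\EM$, defining the induced linear order on it, applying $\ADS$ to get a monotone chain, and finally extracting an $\NN$-monotone subsequence which is $f$-homogeneous --- yields $\RT^2_2(p) \equiv \RT^2_2$. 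In case~(b), where $p$ is not a permutation, $\EM \to \RT^2_2(p)$ combines directly with the hypothesis to give $\RT^2_2(p) \equiv \EM$.

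The main obstacle is case~(c), where $p$ is a non-separable permutation with $\RCA_0 \vdash \RT^2_2(1302) \to \RT^2_2(p)$. Chaining with the hypothesis yields $\RT^2_2(1302) \to \EM$, so the proof reduces to establishing the separation $\RT^2_2(1302) \not\to \EM$ over $\RCA_0$. I would prove this by constructing an $\omega$-model of $\RT^2_2(1302)$ which fails $\EM$, built around a specific computable coloring all of whose infinite transitive subsets are blocked from the model. Since $1302$ is diverging and irreducible, \Cref{cor:sep-permutation-hyperimmunities} already yields that $\RT^2_2(1302)$ preserves $\omega$ hyperimmunities; what is needed is a sharper preservation property, sensitive enough to refuse $\EM$-solutions, in the spirit of the preservation and probabilistic techniques developed later in the paper. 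This computability-theoretic separation is the technical heart of handling case~(c), and the only step that does not fall out immediately from the trichotomy.
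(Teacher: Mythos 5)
Your decomposition matches the paper's proof exactly: invoke the trichotomy of \Cref{prop:trichotomy-ads-em-3142}, collapse case~(a) to $\RT^2_2$ via $\EM + \ADS \to \RT^2_2$, collapse case~(b) to $\EM$, and rule out case~(c) by showing $\RT^2_2(1302)$ does not prove $\EM$. Cases~(a) and (b) are handled correctly.

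The issue is in your treatment of case~(c). You describe it as the technical heart of the argument and as something for which a new construction — \qt{a sharper preservation property, sensitive enough to refuse $\EM$-solutions, in the spirit of the preservation and probabilistic techniques developed later in the paper} — would have to be devised, and you leave that construction undone. But the paper does not prove this separation at all; it simply cites the result: Mimouni and Patey~\cite{mimouni2025ramseylike} already established $\RCA_0 + \RT^2_2(1302) \nvdash \EM$. (The paper even tells you, in \Cref{sec:open-questions}, precisely what preservation property does the job: $\RT^2_2(1302)$ preserves $\omega$ 2-dim hyperimmunities, which is exactly the sharpened notion you were groping for.) So your instinct about the \emph{shape} of the missing argument is sound, but as written your proposal has a genuine hole in case~(c): it reduces the corollary to a nontrivial separation theorem which it neither proves nor attributes. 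A reader who did not already know the Mimouni--Patey result could not complete your proof from what you wrote. With that citation filled in, your argument is correct and coincides with the paper's.
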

\begin{proof}
Let $p$ be an $\RT^2_2$-pattern such that $\RCA_0 \vdash \RT^2_2 \to \RT^2_2(p)$ and $\RCA_0 \vdash \RT^2_2(p) \to \EM$. By \Cref{prop:trichotomy-ads-em-3142}, we have three cases:
Case 1: $\RCA_0 \vdash \RT^2_2(p) \to \ADS$. Then, since $\RCA_0 \vdash \ADS \wedge \EM \to \RT^2_2$, $\RCA_0 \vdash \RT^2_2(p) \to \RT^2_2$.
Case 2: $\RCA_0 \vdash \EM \to \RT^2_2(p)$. In this case, we are done.
Case 3: $\RCA_0 \vdash \RT^2_2(1302) \to \RT^2_2(p)$. By Mimouni and Patey~\cite{mimouni2025ramseylike}, $\RCA_0 + \RT^2_2(1302) \nvdash \EM$, contradicting the hypothesis that $\RCA_0 \vdash \RT^2_2(p) \to \EM$.
\end{proof}

\section{Basis for separable permutations}\label[section]{sec:basis-sep-permutations}

The separable permutations form a relatively large family. In this section, we define canonical sub-families for separable permutations, the $k$-fractals for $k \in \NN$, which enjoy some universality properties: for every~$k \geq 2$, every separable permutation is a sub-pattern of a $k$-fractal. These families will play an essential role in \Cref{sec:sep-perm-measure} and \Cref{sec:2dnc-sep} to prove our main theorem.

\begin{definition}
A family $\P$ of patterns is a \emph{basis} for another family~$\Q$ of patterns
if every pattern~$q \in \Q$ is a sub-pattern of some pattern~$p \in \P$.
\end{definition}

Suppose $\P$ is a basis for $\Q$. Then the statement \qt{For every~$p \in \P$, every coloring avoiding $p$ satisfies a property} implies \qt{For every~$q \in \Q$, every coloring avoiding~$q$ satisfies the same property}.
Indeed, if a coloring does not satisfy the property, then every pattern in~$\P$ appears in it, and as $\P$ is a basis for~$\Q$, every pattern in~$\Q$ also appears.

\begin{definition}
The family $\F_k$ of \emph{$k$-fractals} is defined inductively as follows:
\begin{itemize}
    \item The $k$-fractal of dimension~0 is the trivial permutation $p : [1]^2 \to 2$. 
    \item Let $p_n$ be the $k$-fractal of dimension~$n$. The $k$-fractal of dimension~$n+1$
    is the permutation $p_{n+1}$ defined as the direct sum $p_n \oplus \dots \oplus p_n$ of $k$ copies of $p_n$  if $n$ is even, and as the skew sum $p_n \ominus \dots \ominus p_n$ of $k$ copies of $p_n$ if $n$ is odd. 
\end{itemize}
\end{definition}

Note that since the skew sum and the direct sum are associative, the previous definition is well-defined.

\begin{remark}
For a fixed coloring $f : [\NN]^2 \to 2$, by a small abuse of notation, we shall simply call \emph{$k$-fractal of dimension~$n$} any finite set $f$-realizing the $k$-fractal of dimension~$n$.
\end{remark}

\begin{example}
Let $f : [\NN]^2 \to 2$ be a coloring. The sets $f$-realizing the $k$-fractal of dimension~0 are the singletons.
The sets $f$-realizing the $k$-fractals of dimension~1 are the $f$-homogeneous sets for color~0 of cardinality~$k$. The sets $f$-realizing the $k$-fractals of dimension~2 are the finite sets of cardinality $k^2$ which can be partitionned into $k$ blocks $F_0 < F_1 < \dots F_{k-1}$, such that each~$F_i$ is $f$-homogeneous for color~0, and the pairs between any two blocks have color~1.
\end{example}

\begin{proposition}\label[proposition]{prop:fractal-basis-separable}
     For every $k \geq 2$, the family $\F_k$ is a basis for the family of separable permutations. 
\end{proposition}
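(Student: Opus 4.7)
The plan is to proceed by structural induction on the construction of the separable permutation via direct and skew sums, after first establishing an upward-closure lemma for the $k$-fractals.

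First, I would show that for every $k \geq 2$ and $n \geq 0$, the $k$-fractal of dimension $n$ is a sub-pattern of the $k$-fractal of dimension $n+1$. This is immediate from the definition: $p_{n+1}$ is obtained as a direct or skew sum of $k \geq 2$ copies of $p_n$, and any single one of these copies realizes $p_n$ as a sub-pattern. By iterating, $p_n$ embeds as a sub-pattern of $p_m$ for every $m \geq n$.

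Next, the base case of the induction is the trivial permutation, which is exactly $p_0$. For the inductive step, suppose $p$ is a non-trivial separable permutation; then $p = q \star r$ for separable permutations $q, r$ and $\star \in \{\oplus, \ominus\}$. By the induction hypothesis, $q$ is a sub-pattern of some $k$-fractal of dimension $n_q$ and $r$ of some $k$-fractal of dimension $n_r$. Using upward closure, I may assume $n_q = n_r = N$ for a common $N$.

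The key step is then to choose a target dimension with the correct parity so that the top-level operation of the enclosing fractal matches $\star$. By the definition, $p_{N'+1}$ uses $\oplus$ at the top level when $N'$ is even and $\ominus$ when $N'$ is odd, so I would pick $N' \in \{N, N+1\}$ of the appropriate parity (invoking upward closure once more to pad $q$ and $r$ from $p_N$ up to $p_{N'}$ if needed). Then $p_{N'+1}$ consists of $k \geq 2$ consecutive blocks each realizing $p_{N'}$, arranged by $\star$; embedding $q$ into the first block and $r$ into the second yields a finite set realizing $q \star r = p$ as a sub-pattern of $p_{N'+1} \in \F_k$. The argument is essentially straightforward structural induction, and the only bookkeeping delicacy is matching the parity of the fractal dimension to the operator $\star$, handled by a single extra application of upward closure. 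I do not foresee a significant obstacle beyond this parity juggling.
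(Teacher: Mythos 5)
Your proof is correct and follows essentially the same structural induction as the paper, with the only difference being that you make the upward-closure lemma (that $p_n$ embeds in $p_m$ for $m\geq n$) explicit, whereas the paper uses it implicitly when lifting $p_0,p_1$ to a common dimension. The parity-matching step to select the right top-level operator is exactly the paper's choice of an odd (resp.\ even) $n$ above $\max(n_0,n_1)$.
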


\begin{proof}
Inductively on the structure of a separable permutation $p$, obtained as a series of direct and skew sums, we can show that there exists some $n$ such that $p$ is a sub-pattern of the $k$-fractal of dimension $n$. \\

This is the case for the trivial permutation of a single point. It is a sub-pattern of the $k$-fractal of dimension $0$.

If $p = p_0 \oplus p_1$, with $n_0,n_1$ such that $p_i$ a sub-pattern of the $k$-fractal of dimension $n_i$ for every $i < 2$. Let $n$ be any odd number greater than $\max(n_0,n_1)$. Then $p_0$ and $p_1$ are sub-pattern of the $k$-fractal of dimension $n-1$, hence $p = p_0 \oplus p_1$ is a sub-pattern of the $k$-fractal of dimension $n$ (since $k \geq 2$).

The case where $p$ is a skew sum of two separable permutations $p_0, p_1$ is similar, with~$n$ taken to be even.
\end{proof}

The following lemma can be seen as a reformulation of the partition lemma for bushy trees \cite[Lemma 2.7]{khan2017forcing} in the context of fractals. Indeed, the $k$-fractal of dimension~$n$ is isomorphic to a $k$-bushy tree of depth~$n$.

\begin{lemma}\label[lemma]{lem:partition-fractal}
    For every $a,b \in \NN$, every $n \in \NN$, and every 2-coloring of the vertices of the $(a + b - 1)$-fractal of dimension $n$, then, either the $a$-fractal of dimension $n$ is a homogeneous sub-pattern for the color $0$ or the $b$-fractal of dimension $n$ is a homogeneous sub-pattern for the color $1$.
\end{lemma}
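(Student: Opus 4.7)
My plan is to prove the lemma by induction on the dimension $n$, with $a,b \geq 1$ (if either is $0$ the statement is vacuous in the natural reading, or trivially true).

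For the base case $n = 0$, the $(a+b-1)$-fractal of dimension $0$ is a single vertex. Whichever of the two colors that vertex receives, it realizes the corresponding $0$-dimensional fractal (a single vertex) homogeneously.

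For the inductive step, assume the result holds for dimension $n$. Recall that the $(a+b-1)$-fractal of dimension $n+1$ is either the direct sum or skew sum (depending on the parity of $n$) of $a+b-1$ copies of the $(a+b-1)$-fractal of dimension $n$. Call these copies $G_0 < G_1 < \dots < G_{a+b-2}$. Apply the inductive hypothesis to the restriction of the coloring to each $G_i$: each $G_i$ either contains an $a$-fractal of dimension $n$ homogeneous for color $0$ (label $G_i$ as \emph{type A}) or a $b$-fractal of dimension $n$ homogeneous for color $1$ (\emph{type B}). By pigeonhole, since there are $a+b-1$ blocks, either at least $a$ of them are type A or at least $b$ of them are type B; otherwise the total would be at most $(a-1)+(b-1) = a+b-2$, a contradiction.

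Suppose (without loss of generality) that indices $i_0 < i_1 < \dots < i_{a-1}$ are type A, and pick within each $G_{i_j}$ an $a$-fractal $H_j$ of dimension $n$ homogeneous for color $0$. The union $H_0 \cup \dots \cup H_{a-1}$ is a sub-pattern of the $(a+b-1)$-fractal of dimension $n+1$. Since the $H_j$'s sit inside distinct blocks $G_{i_j}$ of the outer sum, the relative pattern between any two of them is exactly the pattern between the corresponding blocks, which is determined by whether the outer combinator is $\oplus$ or $\ominus$. Hence $H_0 \cup \dots \cup H_{a-1}$ realizes the $a$-fold direct sum (resp.\ skew sum) of the $a$-fractal of dimension $n$, which is precisely the $a$-fractal of dimension $n+1$, and it is homogeneous for color $0$. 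The symmetric argument handles the type B case, producing a $b$-fractal of dimension $n+1$ homogeneous for color $1$.

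The main subtlety is the verification in the last paragraph: that picking $a$ out of the $a+b-1$ outer blocks and taking sub-patterns inside each really does assemble into the $a$-fractal of dimension $n+1$ (rather than merely some permutation that contains it as a sub-pattern). This reduces to the straightforward observation that direct and skew sums are hereditary under choosing any sub-collection of summands, combined with the associativity of $\oplus$ and $\ominus$ already noted after the definition of the $k$-fractals.
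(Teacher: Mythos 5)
Your proof is correct and follows essentially the same route as the paper's: induction on the dimension, decomposing the $(a+b-1)$-fractal of dimension $n+1$ into its $a+b-1$ blocks of dimension $n$, applying the inductive hypothesis blockwise, invoking the finite pigeonhole principle, and reassembling the chosen sub-fractals via the outer sum. Your closing remark on why the reassembly really yields the $a$-fractal (rather than merely a super-pattern of it) is a reasonable extra verification that the paper leaves implicit.
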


\begin{proof}
By induction on $n$. The case $n = 0$ is clear.

Suppose the property holds for some $n \in \NN$. Assume without loss of generality that $n$ is even, the case $n$ odd being similar. Let $p_{n+1}$ be the $(a + b - 1)$-fractal of dimension $n+1$ and $f : p_{n+1} \to 2$ be a $2$-coloring. The pattern $p_{n+1}$ can be written as the direct sum $p_n \oplus \dots \oplus p_n$ of $a + b - 1$ copies of the $(a + b - 1)$-fractal of dimension $n$. For $i < a + b - 1$, denote by $p_n^i$ the $i$-th such copy. 

The coloring $f$ induces a coloring on each $p_n^i$, hence, by the induction hypothesis, $p_n^i$ either has the $a$-fractal of dimension $n$ as an $f$-homogeneous sub-pattern for the color $0$, or the $b$-fractal of dimension $n$ as an $f$-homogeneous sub-pattern for the color $1$. Define $c_i < 2$ to be equal to $0$ if we are in the first case and equal to $1$ in the second and denote by $q^i$ the sub-pattern in either cases.

By the finite pigeonhole principle, either $|\{i < a + b - 1 : c_i = 0\}| \geq a$ or $|\{i < a + b - 1 : c_i = 1\}| \geq b$. In the first case, take $a$ indexes $i_0 < \dots < i_{a-1}$ such that $c_{i_0} = \dots = c_{i_{a-1}} = 0$, and consider the direct sum $q_{i_0} \oplus \dots \oplus q_{i_{a-1}}$. This direct sum is the $a$-fractal of dimension $n+1$ and is $f$-homogeneous for the color $0$. The second case is similar.
\end{proof}

\section{Separable permutations and measure}\label[section]{sec:sep-perm-measure}

In this section, we prove that for every separable permutation~$p$, every computable instance of~$\coRT^2_2(p)$ admits a probabilistic solution. More precisely, we prove that the measure of oracles computing a solution is~1.
An analysis of the complexity of the argument using the tools of algorithmic randomness yields that every 2-random is sufficient to compute such a solution. The argument will be later refined in \Cref{sec:2dnc-sep} using diagonal non-computation, to obtain \Cref{maintheorem1}.
\bigskip

\noindent
\textbf{First example.}
As a warm-up, consider the statement $\coRT^2_2(120)$, where the permutation~120 corresponds to the following pattern:
\vspace{1cm}
\begin{equation}\label{eq:perm120}
\patternthree{3}
\end{equation}
The statement was already studied by various authors under the names $\mathsf{SHER}$ (Dorais et al.~\cite{dorais2016uniform}), $\mathsf{TAC}$ (Conidis~\cite{conidistac}) $\mathsf{CAC\ for\ trees}$ and $\mathsf{SAC}$ (Cervelle, Gaudelier and Patey~\cite{cervelle2024reverse}). In particular, Cervelle, Gaudelier and Patey~\cite[Section 3]{cervelle2024reverse} proved that every computable instance of $\mathsf{SAC}$ admits a probabilistic solution. We give here the argument in the language of $\coRT^2_2(120)$ as it already contains the core ideas of the generalization to all separable permutations.

Let $f : [\NN]^2 \to 2$ be a computable coloring avoiding the permutation~120. 
Given an infinite set~$X \subseteq \NN$, we say that an element~$x \in \NN$ is \emph{$X$-bad} if for all but finitely many~$y \in X$, $f(x, y) = 1$. Otherwise, $x$ is \emph{$X$-good}.
Since $f$ avoids~120, then for every infinite set~$X \subseteq \NN$, every finite $f$-homogeneous set~$F \subseteq \NN$ for color~0 contains at most one $X$-bad element. Thus, if one picks an element~$x \in F$ at random, the chances that it is $X$-bad is at most $\frac{1}{\card F}$.

Consider the following probabilistic algorithm to compute an infinite $f$-homogeneous set for color~0: Start with the computable Mathias condition $(\emptyset, \NN)$.
At step~$s$, assume we have defined a computable Mathias condition $(\sigma_s, X_s)$, where $\card \sigma_s = s$ and $\forall x \in \sigma_s\ \forall y \in X_s\ f(x, y) = 0$. Choose a finite $f$-homogeneous set~$F_s \subseteq X_s$ for color~0 of size~$2^{s+2}$, and pick an element~$x \in F_s$ at random. Let $\sigma_{s+1} = \sigma_s \cup \{x\}$ and $X_{s+1} = \{ y \in X_s : y > x \wedge f(x, y) = 0 \}$ and go to the next step.

There are two reasons why the algorithm could fail. First, given a computable Mathias condition $(\sigma_s, X_s)$, there might be no $f$-homogeneous subset of~$X_s$ for color~0 of size~$2^{s+2}$. In this case, $f$ has a very degenerate behavior in~$X$, and by a classical argument, there exists an infinite computable $f$-homogeneous set for color~1 (see \Cref{lem:unbalanced-ramsey}). If there is such a set, then we are done, so assume this issue does not arise. The other reason why this algorithm could fail is if the set $X_{s+1} = \{ y \in X_s : y > x \wedge f(x, y) = 0 \}$ is finite, in other words if $x$ is $X_s$-bad. The chances that the algorithm fails at step~$s$ is then at most $\frac{1}{\card F_s} = 2^{-s-2}$, so the chances that the algorithm fails at some point is at most $\sum_s 2^{-s-2} = \frac{1}{2}$. One can parameterize the algorithm by a constant~$k \in \NN$, and search for sets $F_s$ of cardinality $2^{s+1+k}$, in which case the probability of failure is at most $2^{-k}$.


By coding the choices within the blocks $\{F_s\}_{s \in \NN}$ into chunks of an infinite binary sequence, that is, decomposing any $R \in \cs$ into an infinite sequence $\sigma^R_0 \cdot \sigma^R_1 \cdot \dots$ where $\sigma^R_s$ is a string of length~$2^{s+k+1}$ coding for the choice of an element in~$F_s$, the measure of oracles $R \in \cs$ such that this decomposition yields a valid algorithm is at least~$1-2^{-k}$.  This tends to~1 when $k$ tends to infinity, so the measure of oracles computing a solution is~1.

One can go one step further, and analyse the amout of randomness needed to make this computation work, using algorithmic randomness. See Nies~\cite{nies2009computability} or Downey and Hirschfeldt~\cite{downey2010algorithmic} for a good introduction to the topic. In what follows, $\mu$ denotes the Lebesgue measure over the Cantor space.

\begin{definition}
A \emph{$\Sigma^0_1(Z)$-test} (or Martin-L\"of test relative to~$Z$) is a uniform sequence $(\U_n)_{n \in \NN}$ of $\Sigma^0_1(Z)$ classes such that for every~$n \in \NN$, $\mu(\U_n) \leq 2^{-n}$.
A set~$X$ is \emph{$Z$-random} if $X \not \in \bigcap_n \U_n$ for every $\Sigma^0_1(Z)$-test $(\U_n)_{n \in \NN}$.
\end{definition}

We shall call \emph{1-random} and \emph{2-random}\footnote{The notion of 2-random set is actually defined in terms of $\Sigma^0_2$-tests instead of $\Sigma^0_1(\emptyset')$-tests. However, for every $\Sigma^0_2$ class of positive measure~$\U \subseteq \cs$ and every~$\epsilon > 0$, one can uniformly compute an index of a $\Sigma^0_1(\emptyset')$ class $\V \supseteq \U$ such that $\mu(\V) < \mu(\U) + \epsilon$, so the two notions coincide (see Kautz~\cite{kautz1991degrees}).} any $\emptyset$-random and $\emptyset'$-random, respectively. These notions, and in particular 2-randomness, were studied in the framework of reverse mathematics by various authors (see \cite{conidis2013random,nies2020randomness,belanger2021where}).

Being $X$-bad is a $\Sigma^0_1(X')$ property, so given~$k \in \NN$, the class $\U_k$ of oracles for which the algorithm with parameter~$k$ fails is $\Sigma^0_1(\emptyset')$ uniformly in~$k$, and has measure at most~$2^{-k}$. The collection $(\U_k)_{k \in \NN}$ is therefore a $\Sigma^0_1(\emptyset')$-test, so for every 2-random~$R$, $R \not \in \bigcap_k \U_k$, so there is some~$k \in \NN$ for which the $k$-parameter algorithm succeeds following the choices of~$R$. Thus, every 2-random computes a solution.

\bigskip
\noindent
\textbf{General case.} Let us now consider arbitrary separable permutations. The fractals forming a basis for separable permutations, we shall restrict ourselves to this sub-class.
We start with a technical lemma which can be considered as folklore. It admits several proofs, using various levels of induction. However, its exact reverse mathematical strength is currently unknown (see Frittaion, personal communication).

\begin{lemma}[Folklore]\label[lemma]{lem:unbalanced-ramsey}
    Let $f : [\NN]^2 \to 2$ be a computable coloring and $k \in \NN$ such that no clique of size $k$ is $f$-homogeneous for the color $0$. Then there exists some infinite computable $f$-homogeneous set for the color $1$.   
\end{lemma}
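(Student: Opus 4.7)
The plan is to proceed by induction on $k$. The base case $k = 2$ is immediate: the hypothesis then says that no pair has color~$0$, so $\NN$ itself is the desired infinite computable color-$1$ homogeneous set. For the inductive step, fix $k \geq 3$, assume the lemma for $k-1$, and let $f$ be a computable coloring with no $f$-homogeneous set for color~$0$ of size $k$. For each $x \in \NN$, let $A_x = \{y \neq x : f(x,y) = 0\}$, which is a computable set uniformly in $x$.

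I would then perform a meta-level case split depending on whether some $x$ admits an infinite $A_x$. If such an $x$ exists, then $f$ restricted to $A_x$ (identified with $\NN$ via the principal function of $A_x$) is a computable coloring with no color-$0$ clique of size $k-1$, since any such clique, together with $x$, would form a color-$0$ clique of size $k$ in the original coloring. The inductive hypothesis then yields an infinite computable color-$1$ homogeneous subset of $A_x$, which remains computable in $f$ because $A_x$ is.

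If instead every $A_x$ is finite, I would build the solution greedily: set $x_0 = 0$ and, given pairwise color-$1$ connected $x_0 < \dots < x_n$, let $x_{n+1}$ be the least $y > x_n$ with $f(x_i, y) = 1$ for all $i \leq n$. Such a $y$ exists because the excluded values all lie in the finite union $\bigcup_{i \leq n} A_{x_i}$; the search is effective, so $\{x_0, x_1, \dots\}$ is an infinite computable $f$-homogeneous set for color~$1$.

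The main obstacle is that the above dichotomy is not itself effective: one cannot computably decide whether there exists some $x$ with $A_x$ infinite, nor locate such an $x$. This is harmless here, because the lemma only asserts the existence of a computable solution rather than a procedure uniform in $f$ and $k$, so a meta-level case analysis is legitimate. As the excerpt notes in passing, this non-uniformity is presumably what makes the exact reverse-mathematical strength of the statement delicate, since a uniform proof would seem to require additional induction or collection principles to locate the relevant witness.
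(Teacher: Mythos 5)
Your proof is correct, but it takes a genuinely different route from the paper's. The paper's argument (attributed to B\'elanger) builds a single computable tree $T \subseteq \NN^{<\NN}$ in which every node is $f$-homogeneous for color~$0$, siblings are pairwise $1$-colored, and every integer appears somewhere; the hypothesis bounds the depth of $T$ by $k$, so a pigeonhole argument using $\BSig_2$ (some level of $T$ is infinite) and $\ISig_2$ (take the least such level) produces a node with infinitely many children, whose children form the desired $1$-homogeneous set. Your proof instead does an external induction on $k$, and at each level performs a $\Sigma^0_3$ meta-case split: does some $A_x$ become infinite? Both arguments establish the lemma for $\omega$-models, which is all the paper uses it for, so your reasoning about harmless non-uniformity is fine. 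The trade-off is real though: the paper's tree argument consolidates all the non-uniformity into two applications of $\Sigma^0_2$-level collection and induction, which is what lets it be carried out in $\RCA_0 + \ISig_2$ even when $k$ is non-standard, whereas your nested $\Sigma^0_3$ dichotomy (iterated up to $k$ times) does not obviously formalize below that, and the paper explicitly cares about pinning down the exact strength of this lemma (noting it is open and, in later work, that it fails over $\BSig_2$). Your proof is therefore more elementary in the $\omega$-model setting, but less informative from the reverse-mathematics viewpoint the authors are tracking.
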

\begin{proof}[Belanger's proof over~$\ISig_2$ (personal communication)]
Let $T \subseteq \NN^{<\NN}$ be the computable tree such that
\begin{itemize}
    \item[(1)] every $x \in \NN$ appears in some node~$\sigma \in T$;
    \item[(2)] every $\sigma \in T$ is $f$-homogeneous for color~0;
    \item[(3)] for every $\sigma \in T$ and every~$x, y \in \NN$ such that $\sigma \cdot x, \sigma \cdot y \in T$, $f(x, y) = 1$.
\end{itemize}
By (2) and by assumption, $T$ has finite depth, so by (1) and $\BSig_2$, there is some level~$\ell_0 \in \NN$ such that $\{ \sigma \in T : |\sigma| = \ell_0 \}$ is infinite. By $\ISig_2$, there is a least such level $\ell \leq \ell_0$. Since $T$ has only one node at level~0, namely $\langle \rangle$, $\ell > 0$. By minimality of~$\ell$, $\{ \sigma \in T : |\sigma| = \ell-1 \}$ is finite, so by $\BSig_2$, there is some~$\sigma \in T$ of length~$\ell-1$ with infinitely many immediate children. The set $\{ x \in \NN : \sigma \cdot x \in T \}$ is therefore infinite, and by (3), it is $f$-homogeneous for color~1.
\end{proof}

Given an infinite set~$X \subseteq \NN$, we say that an element~$x \in \NN$ has \emph{limit $c$ in~$X$} if for all but finitely many~$y \in X$, $f(x, y) = c$. The following proposition is the probabilistic core of the argument, and generalizes the case $\coRT^2_2(120)$ by considering blocks which have an arbitrarily small ratio of bad elements instead of a fixed number.

\begin{proposition}\label[proposition]{prop:2random-computes-homogeneous}
Let $f : [\NN]^2 \to 2$ be a computable coloring, $c < 2$ be a color, and $\varphi(k, F)$ be a $\Sigma^0_1$ formula, such that, for every infinite computable set~$X \subseteq \NN$:
\begin{itemize}
    \item[(1)] For every $k > 0$, there is a finite set~$F \subseteq X$ such that $\varphi(k, F)$ holds; 
    \item[(2)] For every~$k > 0$ and every finite set~$F \subseteq \NN$ such that $\varphi(k, F)$ holds, there are at most $\frac{\card F}{k}$ elements $x \in F$ having limit $1 - c$ in $X$.
    
    
\end{itemize}
Then every $2$-random set computes an infinite $f$-homogeneous set for the color~$c$.
\end{proposition}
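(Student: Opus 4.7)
The plan is to generalize the probabilistic algorithm sketched for $\coRT^2_2(120)$ in the warm-up, using the abstract properties (1) and (2) of $\varphi$ in place of the specific combinatorial inputs from the $120$ case. Fix a parameter $k \in \NN$ and, relative to a random oracle $R \in \cs$, construct a Mathias-style sequence $(\sigma_s, X_s)_{s \in \NN}$: start with $(\sigma_0, X_0) = (\emptyset, \NN)$, and at stage $s$, given that $X_s$ is infinite and computable (through the finite set $\sigma_s$, hence from a finite prefix of $R$), invoke property~(1) to $\Sigma^0_1$-search for a finite $F_s \subseteq X_s$ with $\varphi(2^{s+k+1}, F_s)$, use a fresh chunk of bits of $R$ to select an element $x_s \in F_s$ uniformly, and put $\sigma_{s+1} = \sigma_s \cup \{x_s\}$ and $X_{s+1} = \{y \in X_s : y > x_s \wedge f(x_s, y) = c\}$. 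Provided no $x_s$ has limit $1-c$ in $X_s$, each $X_{s+1}$ remains infinite and computable, and $\bigcup_s \sigma_s$ is the desired $R$-computable infinite $f$-homogeneous set for color~$c$.

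The core measure computation uses property (2). Declare \emph{failure} at stage $s$ when the chosen $x_s$ has limit $1-c$ in $X_s$. Applying property (2) with $X = X_s$ and $F = F_s$ bounds the number of such bad elements of $F_s$ by $\card F_s / 2^{s+k+1}$, so conditional on any history reaching stage $s$, the uniform choice from $F_s$ lands on a bad element with probability at most $2^{-s-k-1}$. A union bound over $s$ yields total failure probability at most $\sum_{s \geq 0} 2^{-s-k-1} = 2^{-k}$.

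Let $\U_k \subseteq \cs$ denote the class of oracles on which the $k$-parameter algorithm fails. The failure event at stage $s$ unfolds to $\exists N\, \forall y \in X_s\,(y > N \to f(x_s, y) \neq c)$, a $\Sigma^0_2$ predicate in $R$ (since $x_s$ and $X_s$ are $R$-computable through $\sigma_s$), hence a $\Sigma^0_1(\emptyset')$ condition. Taking the union over $s$ shows that $\U_k$ is uniformly $\Sigma^0_1(\emptyset')$ with $\mu(\U_k) \leq 2^{-k}$, so $(\U_k)_{k \in \NN}$ forms a $\Sigma^0_1(\emptyset')$-Martin-L\"of test. Any $2$-random $R$ escapes $\bigcap_k \U_k$, so for some $k$ the $k$-parameter algorithm succeeds on $R$ and produces an $R$-computable infinite $f$-homogeneous set for color $c$.

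The step requiring the most care will be the coding of uniform sampling from $F_s$ into bits of $R$: one standardly decomposes $R$ into countably many independent uniform sub-sequences (for instance by interleaving the bits of $R$) and uses the $s$-th sub-sequence to pick uniformly from $F_s$ via the natural subdivision of $[0,1)$, so that the conditional probability at stage $s$ is bounded exactly by the ratio produced by property (2). Once this bookkeeping is in place, the remainder of the argument is a straightforward union bound, combined with the $\Sigma^0_2$-ness of the limit-$1{-}c$ predicate and the definition of $2$-randomness as a $\Sigma^0_1(\emptyset')$ escape notion.
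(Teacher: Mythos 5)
Your proposal is correct and takes essentially the same approach as the paper: a stage-by-stage probabilistic Mathias construction that uses hypothesis~(1) to find blocks $F_s$ with $\varphi(2^{s+k+1}, F_s)$, hypothesis~(2) together with a union bound to bound the failure probability by $2^{-k}$, and a packaging of the failure classes $\U_k$ into a $\Sigma^0_1(\emptyset')$-test that every $2$-random escapes. The one point you flag as delicate --- coding the uniform choice from $F_s$ into bits of $R$ --- the paper resolves more cleanly than interleaving plus interval subdivision: it first replaces $\varphi(k,F)$ by $\psi(k,F) \equiv \exists F' \exists x\,(F \subseteq F' \wedge \card F = 2^x \wedge \card F' < 2^{x+1} \wedge \varphi(2k,F'))$, which still satisfies (1) and (2) but forces $\card F_s$ to be a power of two, so stage~$s$ consumes exactly $\log_2 \card F_s$ consecutive bits of $R$ and the failure class is transparently a $\Sigma^0_2$ (hence $\Sigma^0_1(\emptyset')$) subclass of Cantor space, with no measure-zero boundary events to discard.
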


\begin{proof}
Fix such a coloring $f$, a color $c$ and a formula $\varphi(k,F)$. Consider the $\Sigma^0_1$-formula 
$$
\psi(k,F) \equiv \exists F' \exists x, F \subseteq F' \wedge \card F = 2^x \wedge \card F' < 2^{x+1} \wedge \varphi(2k,F')
$$
Then $\psi(k,F)$ satisfies the same hypothesis as $\varphi(k,F)$, but with the added property that if $\psi(k,F)$ holds, then the cardinality of $F$ is a power of $2$. Thus, without loss of generality, we can assume that $\varphi(k,F)$ also has this property. 

A finite increasing sequence $\sigma \in \NN^{<\NN}$ is \emph{valid} if the set $X_\sigma = \{ z \in \NN : \forall i < |\sigma|\ f(\sigma(i), z) = c \}$ is infinite. Let $V \subseteq \NN^{<\NN}$ be the $\Pi^0_2$ set of valid sequences.

Fix some $n \in \NN$ and consider some increasing computable sequence $2 \leq u_0 < u_1 < \dots$ such that $\sum_{i = 0}^{\infty} \frac{1}{u_i} < 2^{-n}$. 
For every valid sequence $\sigma \in V$, a sequence of finite sets $F_0 < F_1 < \dots < F_{|\sigma|}$ will be said to be a \emph{$\sigma$-sequence} if it satisfies the following properties:
\begin{itemize}
    \item For every $i < |\sigma|$, $\sigma(i) \in F_i$ 
    \item $\varphi(u_i,F_i)$ holds for every $i \leq |\sigma|$.
    \item For every $i < j \leq |\sigma|$ and every $y \in F_j$, $f(\sigma(i),y) = c$.
\end{itemize}
Note that given $\sigma \in V$, being a $\sigma$-sequence is a $\Sigma^0_1$-predicate.

An $\epsilon$-sequence $\langle F_0 \rangle$ is \emph{minimal} if $F_0$ is the least set found in a computable search. A $(\sigma \cdot x)$-sequence $\langle F_0, \dots, F_{|\sigma|}, F_{|\sigma|+1}\rangle$  is \emph{minimal} if the $\sigma$-sequence $\langle F_0, \dots, F_{|\sigma|}\rangle$ is minimal and $F_{|\sigma|+1}$ is the least set found in a computable search. Thus, every $\sigma \in V$ admits at most one minimal $\sigma$-sequence. Let $T_n \subseteq V$ be the $\Pi^0_2$ set of valid~$\sigma \in V$ admitting a minimal $\sigma$-sequence.
\smallskip

\textbf{Claim 1}: The set $T_n$ is an infinite, finitely branching tree. Indeed, the property of being valid is closed under prefix, as well as the property of admitting a minimal $\sigma$-sequence, so the set $T_n$ is a tree. Let $\sigma$ be an element of $T_n$, and let $\langle F_0, \dots, F_{|\sigma|}\rangle$ be its corresponding minimal $\sigma$-sequence, then, every $x$ such that $\sigma \cdot x \in T_n$ is such that $x \in F_{|\sigma|}$, so there are finitely many such $x$, and $T_n$ is finitely branching.

Let $\sigma$ be an element of $T_n$ with minimal sequence $\langle F_0, \dots, F_{|\sigma|}\rangle$. Since $\sigma$ is valid, the set $X_{\sigma}$ is infinite, and since $\varphi(u_{|\sigma|}, F_{|\sigma|})$ holds, there are at most $\frac{\card F_{|\sigma|}}{u_{|\sigma|}}$ elements $x \in F_{|\sigma|}$ such that the set $\{ y \in X_{\sigma} : f(x, y) = c \} = X_{\sigma \cdot x}$ is finite, hence, for all the other $x \in F_{|\sigma|}$, $\sigma \cdot x$ will be valid.

Pick some $x \in F_{|\sigma|}$ such that $X_{\sigma \cdot x}$ is infinite. By the properties of $\varphi(k,F)$, there exists some $F_{|\sigma| + 1} \subseteq X_{\sigma \cdot x} \setminus \{0, \dots, \max F_{|\sigma|}\}$ such that $\varphi(u_{|\sigma|+1}, F_{|\sigma| + 1})$ holds. By definition of $X_{\sigma \cdot x}$, for every $i < |\sigma|$ and every $y \in F_{|\sigma| + 1}$, $f(\sigma(i),y) = c$ and $f(x,y) = c$, hence $\langle F_0, \dots, F_{|\sigma| + 1}\rangle$ is a $(\sigma \cdot x)$-sequence. Since $\langle F_0, \dots, F_{|\sigma|}\rangle$ is minimal, if $F_{|\sigma| + 1}$ is chosen to be the least such set found in a computable search, then $\langle F_0, \dots, F_{|\sigma| + 1}\rangle$ will be a minimal $(\sigma \cdot x)$-sequence, and $\sigma \cdot x \in T_n$. Therefore, every element of $T_n$ has at least one descendant, and since $T_n$ is not empty (as it contains the empty sequence), $T_n$ is an infinite tree.
This proves Claim~1.

\smallskip
\textbf{Claim 2}: For every $\sigma \in T_n$ with minimal $\sigma$-sequence $\langle F_0, \dots, F_{|\sigma|}\rangle$, $\sigma \cdot x \in T_n$ for at least $\frac{(u_{|\sigma|}-1)\card F_{|\sigma|}}{u_{|\sigma|}}$ elements $x \in F_{|\sigma|}$. Indeed, we saw that there are at least $\card F_{|\sigma|} - \frac{\card F_{|\sigma|}}{u_{|\sigma|}}$ elements of $F_{|\sigma|}$ such that $X_{\sigma \cdot x}$ is infinite, and that for every such element $\sigma \cdot x \in T_n$.
This proves Claim~2.
\smallskip

An infinite branch $X = \{x_0 < x_1 < \dots \}$ in $T_n$ corresponds to a unique sequence $F_0 < F_1 < \dots$ such that for every $i \in \NN$, $\langle F_0, \dots, F_{i} \rangle$ is the minimal $(X \uh i)$-sequence. Such a branch can be encoded as an element $(b_0,b_1, \dots) \in 2^{\NN}$ of the Cantor space as follows: by our added assumption on $\varphi(k,F)$, there exists $(a_i)_{i \in \NN}$ such that $\card F_0 = 2^{a_0}$ and $\card F_{i+1} = 2^{a_{i+1} - a_i}$ for every $i \in \NN$, then let $b_0, \dots, b_{a_0 - 1}$ be the binary encoding of the position of $x_0$ in $F_0$, and let $b_{a_i}, \dots, b_{a_{i+1} - 1}$ be the binary encoding of the position of $x_{i+1}$ in $F_{i+1}$ for every $i \in \NN$.

Therefore, the set of infinite branches in $T_n$ can be described as a $\Pi_2^0$ subclass of the Cantor space $2^{\NN}$. From an element $(b_0, b_1 \dots) \in 2^{\NN}$ of the Cantor space, we can computably try to see if it encodes some infinite branch $X = \{x_0 < x_1 < \dots \}$ in $T_n$ as follows: there is only one set $F_0$ (which can computably be found) such that $\langle F_0 \rangle$ is the minimal $\epsilon$-sequence, thus, we can recover $x_0$ by looking at the element of $F_0$ in position encoded in binary by $b_0, \dots, b_{(\log |F_0|) - 1}$, then, there exists at most one $F_1$ such that $\langle F_0, F_1\rangle$ is the minimal $x_0$-sequence and we can similarly retrieve $x_1$, if $(b_0, b_1 \dots)$ was indeed the encoding of an infinite branch of $T_n$, then the decoding procedure is able to continue like this to retrieve the full branch. 

If the element $(b_0, b_1 \dots) \in 2^{\NN}$ is not the encoding of an infinite branch in $T_n$, then the decoding procedure will eventually stay stuck, not being able to find a set $F_i$ to continue the decoding. This happens if for some $j < i$, the element $x_{j}$ previously decoded is such that $x_0 \dots x_{j}$ is not valid (in that case, since $X_{x_0 \cdots x_j}$ is finite, there will be at stage after which no $F_i$ to continue the decoding can be found). By Claim 2, if $x_0 \cdots x_{j-1}$ is valid, then there are less than $\frac{\card F_j}{u_j}$ elements $x \in \card F_j$ such that $x_0\cdots x_{j-1} x$ is not valid, hence the probability of that happening at stage $j$ of the decoding is less than $\frac{1}{u_j}$, and the probability of that happening at any stage less than $\sum_{i=0}^{\infty} \frac{1}{u_i} < 2^{-n}$. Therefore, the measure of the set of infinite branches of $T_n$ is greater than $1 - 2^{-n}$ with this encoding. 

For every 2-random set $Z$, there will be some value $n$ such that $Z$ is the encoding of an infinite path $x_0 < x_1 < \dots$ through the corresponding tree $T_n$. Since our decoding procedure was computable, $Z$ computes the set $\{x_0, x_1, \dots\}$, which is an infinite $f$-homogeneous set for the color $c$.
\end{proof}

The following proposition is the desired result, restricted to fractals. The theorem about separable permutations follows directly from it, since the fractals form a basis for separable permutations.

\begin{proposition}\label[proposition]{prop:probabilistic-fractal-avoiding}
    Let $f : [\NN]^2 \to 2$ be a computable coloring. For every $n, k \in \NN$, if $f$ avoids the $k$-fractal of dimension $n$, then every 2-random set computes an infinite $f$-homogeneous set.
\end{proposition}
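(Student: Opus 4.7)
My plan is to proceed by induction on $n$. The base cases $n = 0$ and $n = 1$ are direct: for $n = 1$, a $k$-fractal is just a $k$-element $0$-homogeneous clique, so if $f$ avoids it then \Cref{lem:unbalanced-ramsey} provides an infinite computable $1$-homogeneous set.

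For the inductive step with $n \geq 2$, write $c^* = (n+1) \bmod 2$ for the color between top-level blocks of the $n$-dimensional $k$-fractal (this is $0$ if $n$ is odd and $1$ if $n$ is even, from the alternating skew-sum/direct-sum construction) and set $c = 1 - c^*$. The plan is to apply \Cref{prop:2random-computes-homogeneous} with this color $c$ and the $\Sigma^0_1$ formula $\varphi(k', F)$ stating that $F$ $f$-realizes the $L(k')$-fractal of dimension $n-1$, where $L(k')$ is chosen sufficiently large that $(L - k + 1)^{n-1} > L^{n-1}(1 - 1/k')$; a Bernoulli-type estimate shows that any $L(k') \geq (n-1)(k-1)k' + 1$ suffices.

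The first hypothesis of \Cref{prop:2random-computes-homogeneous} is verified by dichotomy: if some infinite computable $X$ contains no $L(k')$-fractal of dimension $n-1$, then $f|_X$ is a computable coloring avoiding that pattern, and the inductive hypothesis applied to $f|_X$ already yields the conclusion; otherwise, every infinite computable set contains such an $F$ and hypothesis (1) holds. For the second hypothesis I aim to show that if $F$ is an $L(k')$-fractal of dimension $n-1$ and more than $\card F / k'$ of its elements have limit $c^*$ in some infinite computable $X$, then $f$ must $f$-realize the $k$-fractal of dimension $n$, contradicting the avoidance. The first step is to apply \Cref{lem:partition-fractal} with $a = k$ and $b = L(k') - k + 1$ to $F$ under the vertex $2$-coloring ``has limit $c^*$ in $X$'' versus ``does not''; the choice of $L(k')$ makes the color-$1$ alternative numerically impossible, delivering an initial block $F_0 \subseteq F$ that is itself a $k$-fractal of dimension $n-1$ whose vertices all have limit $c^*$ in $X$.

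The rest of the combinatorial claim is carried out iteratively in the reservoirs $X_{i+1} = \{y \in X : y > \max F_i,\ f(x, y) = c^* \text{ for all } x \in \bigcup_{j \leq i} F_j\}$, which are infinite and computable whenever every previously constructed block consists of limit-$c^*$ elements, building further blocks $F_1, \dots, F_{k-1}$ by re-applying \Cref{lem:partition-fractal} to fresh $L$-fractals of dimension $n-1$ found in each $X_{i+1}$; the union $\bigcup_{i < k} F_i$ then $f$-realizes the $k$-fractal of dimension $n$. I expect the main obstacle to be precisely this iteration: at each step the fresh $L$-fractal must again contain more than $L(k')^{n-1}/k'$ vertices with limit $c^*$ relative to $X_{i+1}$ for the partition lemma to deliver Case (I) once more. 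Overcoming this will likely require either a pigeonhole argument transferring limit-$c^*$ density from $X$ down through the shrinking reservoirs, or strengthening the inductive statement so that it propagates such density through the iteration explicitly.
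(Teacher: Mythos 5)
Your base cases, choice of color, use of the partition lemma (\Cref{lem:partition-fractal}), the Bernoulli-type bound on $L(k')$, and the plan to feed a density-controlled family of fractals into \Cref{prop:2random-computes-homogeneous} are all exactly the paper's ingredients. The gap you flag at the end, however, is genuine and is the crux. Attempting to verify hypothesis~(2) of \Cref{prop:2random-computes-homogeneous} by reductio supplies only a single triple $(k', F, X)$ in which too many vertices of $F$ have limit $c^*$ in $X$; \Cref{lem:partition-fractal} then yields one all-$c^*$ block $F_0 \subseteq F$ of dimension $n-1$, but once you pass to $X_1$ nothing forces the next $L$-fractal of dimension $n-1$ you locate there to again carry density $>1/k'$ of limit-$c^*$ vertices, so the partition lemma need not deliver a second all-$c^*$ block. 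Neither a pigeonhole transfer of density nor a density-strengthened inductive hypothesis closes this, because the density you assumed is a property of one finite set in one reservoir and simply does not descend to $X_{i+1}$.

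The paper avoids the reductio entirely and runs a dichotomy that places the universal quantifier where your iteration needs it. \emph{Case~1}: there is an infinite computable $G \subseteq D$ such that for every infinite computable $H \subseteq G$, every $k$-fractal of dimension $n-1$ inside $H$ has at least one vertex not having limit $c^*$ in $H$. After passing to $G$, the density bound you derive from \Cref{lem:partition-fractal} holds uniformly for all sufficiently large fractals in all sub-reservoirs of $G$, hypothesis~(2) is established outright (this is where your computation of $L(k')$ goes), and \Cref{prop:2random-computes-homogeneous} produces the solution. \emph{Case~2}: the negation — for every infinite computable $G \subseteq D$ there exist an infinite computable $H \subseteq G$ and a $k$-fractal $S \subseteq H$ of dimension $n-1$ all of whose vertices have limit $c^*$ in $H$. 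Because this holds for \emph{every} $G$, you may iterate: extract $S_0 \subseteq H_0$, shrink to a reservoir $G_1 \subseteq H_0$ on which the limit behaviour of $S_0$ has settled, re-apply the Case~2 hypothesis to $G_1$ to get $S_1 \subseteq H_1$, and repeat $k$ times; the stacked blocks $S_0 < \dots < S_{k-1}$ then $f$-realize the $k$-fractal of dimension $n$, contradicting avoidance. The fix you are looking for is therefore not an extra density-transfer lemma but a restructuring of the argument around this Case~1/Case~2 split; with that change, the rest of your plan goes through as written.
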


\begin{proof}
Fix a computable coloring $f : [\NN]^2 \to 2$. By induction on~$n$, we will show that for any $k \in \NN$ and every infinite computable set~$D \subseteq \NN$ $f$-avoiding the $k$-fractal of dimension~$n$, every $2$-random set computes an infinite $f$-homogeneous subset of~$D$.

The case $n = 0$ is vacuously true, as no set $f$-avoids the singleton pattern.

In the case $n = 1$, if some infinite computable set~$D$ $f$-avoids the $k$-fractal of dimension~1, then there is no $f$-homogeneous subset of~$D$ for color~0 of size~$k$. Then, by \Cref{lem:unbalanced-ramsey}, there exists an infinite computable $f$-homogeneous subset of~$D$ for the color $1$.


Assume the property to be true for some $n \in \NN$. Fix an infinite computable set~$D$ such that some 2-random set does not compute any infinite $f$-homogeneous subset of~$D$. 
By our induction hypothesis, for every $k' \in \NN$, and every infinite computable set $X \subseteq D$, there is some set $S \subseteq X$ $f$-realizing the $k'$-fractal of dimension~$n$.

Fix a size $k \in \NN$ and let $c$ be equal to $1$ if $n$ is even and $0$ if $n$ is odd. Consider the two following cases:
\smallskip


\textbf{Case 1:} There exists some infinite computable set $G \subseteq D$, such that, for every infinite computable subset $H \subseteq G$, every $S \subseteq H$ $f$-realizing the $k$-fractal of dimension $n$ has at least one element which does not have limit $1-c$ in $H$. 

 

Fix some infinite computable subset $H \subseteq G$, some $\ell \in \NN$ and some $S \subseteq G$ that $f$-realizes the $(k\times \ell - 1)$-fractal of dimension $n$ (such an $S$ exists by our induction hypothesis). We claim that out of the $(k \times \ell - 1)^n$ elements in $S$, at least $(k \times (\ell - 1))^n$ does not have limit $1 - c$ in $H$ for $f$. Indeed, by \Cref{lem:partition-fractal}, either there exists a subset of $S \subseteq G$ $f$-realizing the $k$-fractal of dimension $n$ such that every element has limit behavior $1 - c$ in $H$ for $f$, or there exists a subset of $S \subseteq G$ $f$-realizing the $(k \times (\ell - 1))$-fractal of dimension $n$ such that no element has limit behavior $1 - c$ in $H$ for $f$. The first case being impossible by our assumption, the second case holds, and since the cardinality of the $(k \times (\ell - 1))$-fractal of dimension $n$ is $(k\times (\ell - 1))^n$, we are done.

Since $\lim_{\ell \to \infty} \frac{(k\times (\ell - 1))^{n}}{(k\times \ell - 1)^{n}} = 1$, for every $k$ there exists a bound $\ell_k$ such that every finite set $S \subseteq G$ $f$-realizing the $(k\times \ell_k - 1)$-fractal of dimension $n$ will have at most $\frac{\card S}{k}$ elements having limit $1-c$ for $f$ in any infinite computable subset $H \subseteq G$.

Say $D = \{ x_0 < x_1 < \dots \}$.
Thus, for $\varphi(k,F)$ the $\Sigma_1^0$ formula stating that $\{ x_s : s \in F \}$ is a finite set that $f$-realizes the $(k\times \ell_k - 1)$-fractal of dimension $n$, we get by \Cref{prop:2random-computes-homogeneous} that every $2$-random computes an infinite $f$-homogeneous set for the color $c$.
\smallskip

\textbf{Case 2:} For every infinite computable set $G \subseteq D$, there exists some infinite computable subset $H \subseteq G$ and some $S \subseteq H$ $f$-realizing the $k$-fractal of dimension $n$ and with all its elements having limit $1-c$ in $H$.

In that case, let $H_0 \subseteq D$ be an infinite computable set and $S_0 \subseteq H_0$ be a finite set $f$-realizing the $k$-fractal of dimension $n$ such that all its elements have limit $1-c$ in $H_0$. By our assumption, such a pair exists. Let $G_1 \subseteq H_0$ be such that every element of $S_0$ has reached its limit in $G_1$. Then, there exists an infinite computable set $H_1 \subseteq G_1$ and a finite set $S_1 \subseteq H_1$ $f$-realizing the $k$ fractal of dimension $n$ such that all its element have limit $1-c$ in $H_1$. Pick such a pair $H_1$, $F_1$, and iterate the construction to obtain a sequence $F_0 < \dots < F_{k-1}$ such that $F_0 \cup \dots \cup F_{k-1}$ $f$-realizes the $k$-fractal of dimension $n+1$.
\end{proof}

We are now ready to prove the main theorem of this section.

\begin{theorem}\label[theorem]{thm:avoid-separable-randomness}
    For every separable permutation~$p$ and every computable instance of $\coRT^2_2(p)$, every 2-random computes a solution.
\end{theorem}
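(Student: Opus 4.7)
The plan is to observe that this theorem falls out immediately by combining the two preceding results: \Cref{prop:fractal-basis-separable}, which shows that the family $\F_k$ of $k$-fractals is a basis for the class of separable permutations for every $k \geq 2$, and \Cref{prop:probabilistic-fractal-avoiding}, which establishes the probabilistic content for colorings avoiding a fractal. The strategy is to reduce avoidance of an arbitrary separable permutation to avoidance of a single $k$-fractal, at which point the measure-theoretic construction of the previous section applies.

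Concretely, I would fix a separable permutation $p$ and a computable coloring $f : [\NN]^2 \to 2$ avoiding $p$, thought of as a computable $\coRT^2_2(p)$-instance. First, I would invoke \Cref{prop:fractal-basis-separable} with $k = 2$ to obtain some dimension $n \in \NN$ such that $p$ is a sub-pattern of the $2$-fractal $p_n$ of dimension $n$. Second, I would verify the straightforward monotonicity of avoidance: if some finite set $F \subseteq \NN$ were to $f$-realize $p_n$, then an appropriate subset of $F$ would $f$-realize the sub-pattern $p$, contradicting the assumption that $f$ avoids $p$. Therefore $f$ also avoids the $2$-fractal of dimension $n$. Finally, I would apply \Cref{prop:probabilistic-fractal-avoiding} directly to conclude that every $2$-random set computes an infinite $f$-homogeneous set, which is in particular an infinite solution to the instance $f$ of $\coRT^2_2(p)$.

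There is no substantive obstacle at this stage, since the two ingredients have already been set up precisely for this purpose. All of the real work was carried out earlier: the structural understanding of separable permutations through the fractal basis in \Cref{sec:basis-sep-permutations}, and the delicate probabilistic argument underlying \Cref{prop:2random-computes-homogeneous} and \Cref{prop:probabilistic-fractal-avoiding}. The only check required here is the trivial observation that avoidance of a pattern is inherited by any super-pattern containing it, so the proof amounts to a short chaining of the two propositions.
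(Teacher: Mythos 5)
Your proposal is correct and matches the paper's proof exactly: both reduce to the $k$-fractal basis via \Cref{prop:fractal-basis-separable} (using that avoidance of a sub-pattern propagates to the containing fractal) and then apply \Cref{prop:probabilistic-fractal-avoiding}. The paper leaves the monotonicity-of-avoidance step implicit, whereas you spell it out, but the argument is the same.
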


\begin{proof}
Let $f : [\NN]^2 \to 2$ be a computable coloring avoiding~$p$.
By \Cref{prop:fractal-basis-separable}, there exists some $k,n \in \NN$ such that it avoids the $k$-fractal of dimension $n$. Then by \Cref{prop:probabilistic-fractal-avoiding} every 2-random computes an infinite $f$-homogeneous set.
\end{proof}

\section{Separable permutations and $\DNCs{2}$}\label[section]{sec:2dnc-sep}

We now refine the probabilistic argument of \Cref{sec:sep-perm-measure} to obtain better bounds in terms of relativized diagonal non-computation. Intuitively, it would be sufficient to have a partial choice function instead of a total one, to find good elements in the blocks of sets and obtain an infinite homogeneous set. One therefore does not need to have access to all the bits of the 2-random sequence. It is therefore natural to consider the computational power corresponding to finding infinitely many bits of a random sequence, which is exactly diagonal non-computation.
\bigskip

\noindent
\textbf{DNC functions}. Historically, DNC functions are defined as follows:

\begin{definition}
A function $f : \NN \to \NN$ is \emph{diagonally non-$Z$-computable} or $Z$-DNC if for every~$e$, $f(e) \neq \Phi^Z_e(e)$. A Turing degree is \emph{$Z$-DNC} if it bounds a $Z$-DNC function.
\end{definition}

The diagonally non-computable degrees have many characterizations, using various paradigms, such as fixpoint-free functions (\cite{jockusch1989recursively}), infinite subsets of random sequences (\cite{kjos2009infinite,greenberg2009lowness}, see \cite[Theorem 8.10.2]{downey2010algorithmic}) sequences of high Kolmogorov complexity (\cite{kjos2011kolmogorov}). The following characterization, in terms of uniform avoidance of finite $\Sigma^0_1(Z)$ sets with bounded size, is arguably the most useful:

\begin{definition}
A function $g : \NN^2 \to \NN$ is \emph{$Z$-escaping} if for every $e, k \in \NN$, if $\card W_e^Z \leq k$,
    then $g(e, k) \not \in W_e^Z$.
\end{definition}

The following lemma can be considered as folklore. We reprove it for the sake of completion.

\begin{lemma}[{Folklore (see \cite[Lemma 3.3]{bienvenu2017logical})}]\label[lemma]{lem:equiv-dnc-avoid}
Fix a set $Z$ and a Turing degree~$\dbf$. The following are equivalent:
\begin{itemize}
    \item[(1)] $\dbf$ bounds a $Z$-DNC function;
    \item[(2)] $\dbf$ bounds a $Z$-escaping function.
\end{itemize}
\end{lemma}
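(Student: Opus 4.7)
I will prove the two directions separately.

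For $(2) \Rightarrow (1)$, the approach is a direct application of the $s$-$m$-$n$ theorem. Given a $Z$-escaping function $g$ of Turing degree at most $\dbf$, there is a total computable function $\alpha : \NN \to \NN$ such that $W^Z_{\alpha(e)} = \{\Phi^Z_e(e)\}$ if $\Phi^Z_e(e)\converge$ and $W^Z_{\alpha(e)} = \emptyset$ otherwise. In both cases $|W^Z_{\alpha(e)}| \leq 1$, so the escaping property of $g$ yields $g(\alpha(e), 1) \notin W^Z_{\alpha(e)}$, which forces $g(\alpha(e), 1) \neq \Phi^Z_e(e)$ whenever the latter converges. The function $f(e) := g(\alpha(e), 1)$ is therefore $Z$-DNC and of degree at most $\dbf$.

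For $(1) \Rightarrow (2)$, given a $Z$-DNC function $f$ of degree at most $\dbf$, I construct a $Z$-escaping function $g$ by iterating $f$ through a self-referential $s$-$m$-$n$ construction. For each pair $e, k$ and each partial tuple $(v_0, \ldots, v_{i-1})$, the $s$-$m$-$n$ theorem produces an index $\pi(e, v_0, \ldots, v_{i-1})$ such that $\Phi^Z_{\pi(\ldots)}(\pi(\ldots))$ enumerates $W^Z_e$ and outputs the first element enumerated that does not belong to $\{v_0, \ldots, v_{i-1}\}$, diverging if no such element ever appears. Iteratively set $v_i := f(\pi(e, v_0, \ldots, v_{i-1}))$ for $i = 0, 1, \ldots, k$; each $v_i$ is uniformly $\dbf$-computable, and $g(e, k)$ will ultimately be defined in terms of $v_0, \ldots, v_k$.

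The main obstacle is that, applied naively, the DNC property of $f$ does not by itself force any $v_i$ to escape $W^Z_e$: $f$ could conspire to cycle its outputs within $W^Z_e$ while still respecting each individual DNC constraint at steps where $\Phi^Z_{\pi(\ldots)}(\pi(\ldots))$ converges. The resolution is to first boost $f$, via $s$-$m$-$n$ padding, to a richer DNC witness — for any fixed $k$, a function producing $k$-tuples whose entries jointly avoid the single target value $\Phi^Z_e(e)$ — and then interleave this enhanced witness with the enumeration of $W^Z_e$ so that, when $|W^Z_e| \leq k$, a counting argument ensures at least one computed value must lie outside $W^Z_e$. The value $g(e, k)$ is then selected $\dbf$-computably among the candidates produced. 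The detailed combinatorial verification, classical but technical, follows the argument of~\cite[Lemma 3.3]{bienvenu2017logical}.
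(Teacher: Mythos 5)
Your direction $(2)\Rightarrow(1)$ is correct and is essentially the paper's argument: use $s$-$m$-$n$ to build, from $e$, an index $a$ with $W^Z_a = \{\Phi^Z_e(e)\}$ (or $\emptyset$ if divergent), and set $f(e) := g(a,1)$. Nothing to add there.

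For $(1)\Rightarrow(2)$, you correctly diagnose that the naive iterative scheme fails: building $v_0, v_1, \dots$ where each $v_i$ is a DNC value against ``the first element of $W^Z_e$ not among $v_0,\dots,v_{i-1}$'' gives no guarantee, since $f$ may keep returning values already in $W^Z_e$, so no single $v_i$ ever escapes. But your proposed ``resolution'' — boost $f$ to produce $k$-tuples that ``jointly avoid the single target value $\Phi^Z_e(e)$'' and then ``interleave with the enumeration'' — is not actually an argument: it gestures at a counting step without specifying what tuples are formed, what they diagonalize against, or why one must escape $W^Z_e$. The part you defer to the cited reference is precisely the content of the lemma. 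The missing idea is that one should not iterate $f$ at all, but apply it in parallel to $k+1$ carefully chosen indices $a_0,\dots,a_k$ (depending on $e,k$), where $\Phi^Z_{a_i}(a_i)$ searches for the $i$th element $x_i$ of $W^Z_e$ in order of enumeration, views $x_i$ as a $(k+1)$-tuple $\langle x_i^0,\dots,x_i^k\rangle$, and outputs the $i$th coordinate $x_i^i$. Then $g(e,k) := \langle f(a_0),\dots,f(a_k)\rangle$ differs from every $x_i$ in coordinate $i$ (whenever $x_i$ exists), so when $\card W^Z_e \le k$ it cannot equal any element of $W^Z_e$; there is no iteration, no dependence among the $a_i$, and the DNC property is used once per index. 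Your iterative setup, even boosted, would need a new argument to deliver this coordinatewise diagonalization, and as written the proof of $(1)\Rightarrow(2)$ has a real gap.
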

\begin{proof}
$(1) \to (2)$. Given a $Z$-DNC function~$f$ and $e, k \in \NN$, define $g(e, k)$ as follows:
For every~$i \leq k$, let $\Phi_{a_i}^Z$ be the function which, on every input, searches for the $i$th element~$x_i$ of~$W_e^Z$ in order of apparition, if it exists, and interprets $x_i$ as a $(k+1)$-tuple $\langle x^0_i, \dots, x^k_i \rangle$ and outputs~$x^i_i$. Let $g(e, k) = \langle f(a_0), \dots, f(a_k)\rangle$.
Suppose for the contradiction that $\card W_e^Z \leq k$ but $g(e, k) = x_i \in W_e^Z$ for some~$i \leq k$. Then $\Phi_{a_i}^Z(a_i) = x_i^i = f(a_i)$, contradicting the fact that $f$ is $Z$-DNC.

$(2) \to (1)$. Given a $Z$-escaping function~$g$, and $e \in \NN$, define $f(e)$ as follows:
Let $W_a^Z$ be the $Z$-c.e.\ set whose unique element is $\Phi^Z_e(e)$, if it exists, and $W_a^Z = \emptyset$ otherwise. Let $f(e) = g(a, 1)$. By construction, $\card W_a^Z \leq 1$, so $g(a, 1) \not \in W_a^Z$, hence $f(e) = g(a, 1) \neq \Phi^Z_e(e)$.
\end{proof}

One can refine the previous lemma to require the $Z$-escaping function to output its value within an infinite computable set~$X \subseteq \NN$, by transforming any $Z$-c.e.\ set $W_e^Z \subseteq X$ into the set of positions of its elements in~$X$. 

\bigskip
\noindent
\textbf{Computably bounded escaping functions}.
The situation becomes more problematic if one wants the $Z$-escaping function $g$ to output its value within a finite set~$F \subseteq \NN$. One would require $g$ to be $h$-bounded, for a computable function~$h$. By an analysis of the proof of \Cref{lem:equiv-dnc-avoid}, the existence of a computably bounded $Z$-escaping function is equivalent to the existence of a computably bounded $Z$-DNC function (with a different computable bound). Such functions are still probabilistic objects for sufficiently fast-growing bounds, in that every $Z$-random computes a computably bounded $Z$-DNC function. They have been studied by multiple authors~\cite{beros2024imunity,greenberg2011diagonally}, and in particular by Khan and Miller~\cite{khan2017forcing}, who proved the existence, for every~$Z$, of $Z$-DNC functions which do not compute any computably bounded DNC function, and a fortiori any 2-random set.

Although strictly weaker than 2-randomness, computing solutions from any computably bounded escaping function is not satisfactory for the following reason: The underlying goal is to prove that for every separable permutation~$p$, $\RT^2_2(p)$ implies $\coRT^2_2(p)$ over $\omega$-models, to deduce that $\RT^2_2(p)$ implies $\RT^2_2$. Mimouni and Patey~\cite{mimouni2025ramseylike} constructed a computable coloring $f : [\NN]^2 \to 2$ such that any infinite set $f$-avoiding any pattern~$p$ (not only separable permutations) computes a $\emptyset'$-DNC function. It follows that for every pattern~$p$, $\RT^2_2(p)$ implies the existence of a $\emptyset'$-DNC function. There is however no hope of improving this bound by showing that any pattern avoiding~$p$ computes a computably bounded $\emptyset'$-DNC function, as Liu~\cite{liu2015cone} proved that every computable coloring $f : [\NN]^2 \to 2$ admits an infinite $f$-homogeneous set which does not compute any computably bounded DNC function.

\bigskip
\noindent
\textbf{Partial escaping functions.}
Thankfully, we shall see that any $\emptyset'$-DNC function (or equivalently any $\emptyset'$-escaping function) computes a partial, computably bounded $\emptyset'$-escaping function, in the following sense:

\begin{lemma}\label[lemma]{lem:DNC-avoid-infinite}
    Let $B \subseteq \NN$ be a $\Sigma^0_2$ set of \qt{bad elements}, $(F_i)_{i \in \NN}$ be a computable family of subsets of $\NN$ and $k \in \NN$ such that: $\card F_i \cap B \leq k$ and $\card F_i \geq i$ for every $i$. Then, every $\emptyset'$-DNC degree $\dbf$ computes an infinite subset of $\bigcup_i F_i \setminus B$. 
\end{lemma}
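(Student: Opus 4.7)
By the $\emptyset'$-relativization of Lemma~\ref{lem:equiv-dnc-avoid}, the degree $\dbf$ computes a $\emptyset'$-escaping function $g$, and the plan is to use $g$ to inductively produce an infinite set $Y = \{y_0 < y_1 < \dots\}$ of good elements. After passing to a subsequence, I assume the $F_i$ are pairwise disjoint and ordered by $\max F_i < \min F_{i+1}$, still satisfying $|F_i| \geq i$ and $|F_i \cap B| \leq k$ (possible since $|F_i| \geq i$ guarantees that cofinitely many $F_i$ have elements arbitrarily far to the right). Writing $F_i = \{x^i_0 < \dots < x^i_{m_i-1}\}$, the set of bad positions $V_i = \{j < m_i : x^i_j \in B\}$ is a $\Sigma^0_1(\emptyset')$ subset of $\{0,\dots,m_i-1\}$ of size at most $k$, uniformly in $i$.

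At stage $n$, given $y_0 < \dots < y_{n-1}$, I want to pick a position $j$ outside $V_{i_n}$ for some $i_n$ large enough that $F_{i_n}$ sits above $y_{n-1}$, and set $y_n := x^{i_n}_j$. Applying $g$ directly to a $\emptyset'$-c.e. index of $V_{i_n}$ with bound $k$ only guarantees $g(e,k) \notin V_{i_n}$, but since $g$ is not a priori bounded, this value may fall outside $\{0,\dots,m_{i_n}-1\}$ and be uninformative. My key technical move is to \emph{pad} $V_{i_n}$ into the $\emptyset'$-c.e. set $\widetilde V_{i_n} = V_{i_n} \cup (\{0,\dots,M_{i_n}\}\setminus F_{i_n})$, where $M_{i_n} = \max F_{i_n}$, which has size at most $k + M_{i_n} + 1 - m_{i_n}$; applying $g$ with this matching bound forces the output to either exceed $M_{i_n}$ (still uninformative) or to actually lie in $F_{i_n}\setminus B$, in which case $y_n$ is found.

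The main obstacle is guaranteeing that the favorable case occurs infinitely often, since a priori the escaping function could systematically return values above the threshold $M_{i_n}$. To address this, I would run the construction in parallel over many candidate $F_i$'s at each stage, producing a computable family of distinct padded $\emptyset'$-c.e. indices (obtained for example by trivial padding of the index) all encoding the same $\widetilde V_i$, and then invoke the fact that the underlying $\emptyset'$-DNC function $f$ from which $g$ is built cannot evade all of these requests simultaneously: the growing ratio $m_i/k$ combined with the freedom in index choice forces, by a pigeonhole-style argument, that infinitely many stages produce outputs inside the target interval $\{0,\dots,m_{i_n}-1\}$. This yields the required $\dbf$-computable infinite subset of $\bigcup_i F_i \setminus B$.
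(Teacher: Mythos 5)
There is a genuine gap at the crux of your argument. You correctly identify the obstacle: since a $\emptyset'$-escaping function has no a~priori bound on its outputs, the padded query $\widetilde V_{i_n}$ might always be answered with a value above the threshold $M_{i_n}$, and your construction would then never place any element in $Y$. But the ``pigeonhole-style argument'' you invoke to rule this out does not exist. A $\emptyset'$-DNC function is subject to only one constraint per index ($f(e)\neq\Phi^{\emptyset'}_e(e)$); it carries no upper bound whatsoever, and in particular can consistently return values far above any threshold you compute, for any computable family of padded indices, no matter how many you query in parallel. ``The freedom in index choice'' does not constrain the DNC function; it merely produces more questions that the function is free to answer with large values.

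What is actually needed --- and what the paper's proof supplies --- is a non-domination argument, together with a case split you omit. If $\dbf\geq_T\emptyset'$ the lemma is easy ($\emptyset'$ can directly search for good elements, using $\BSig_2$-style reasoning on the number $\ell\le k$ of bad elements that recurs infinitely often). If $\dbf\not\geq_T\emptyset'$, then \emph{no} $\dbf$-computable function dominates the modulus $\mu$ of $\emptyset'$. The paper exploits this by defining, for each $x$, a single $\emptyset'$-c.e.\ set $W^{\emptyset'}_{e(x)}$ that contains $\{0,\dots,x-1\}$ together with the bad positions in $F_{m^x_j}$ for all the (at most $x$) values $m^x_j$ that the left-c.e.\ approximation $\mu_s(x)$ takes. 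This set has size at most $x(k+1)$, so $p_x := g(e(x), x(k+1))$ avoids it; in particular $p_x\ge x$. Because $(p_x)_x$ is $\dbf$-computable and fails to dominate $\mu$, infinitely many $x$ satisfy $p_x<\mu(x)$, and for such $x$ one can \emph{computably} find a stage $s$ with $\mu_s(x)\ge p_x$, at which point $p_x$ is a good position inside $F_{\mu_s(x)}$. This mechanism --- baking the entire finite approximation history of $\mu(x)$ into a single query, and then cashing in the non-domination property --- is precisely what your proposal is missing; the padding trick alone cannot force the escaping function's hand. (A smaller but real issue: your passage to a disjoint, ordered subfamily requires an argument, since e.g.\ $F_i=\{0,\dots,i-1\}$ satisfies the hypotheses yet all $F_i$ share $0$; one can fix this by trimming, but it is not automatic.)
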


\begin{proof}
Fix such $B \subseteq \NN$, $(F_i)_{i \in \NN}$, $k \in \NN$ and $\dbf$. We have two cases.
\smallskip

\textbf{Case 1:} $\dbf \geq_T \emptyset''$. Then $\dbf \geq_T B$ and therefore $\dbf \geq_T \bigcup_i F_i \setminus B$.

\smallskip

\textbf{Case 2:} $\dbf \not \geq_T \emptyset''$. Then, there exists some $Z \leq_T \emptyset'$ such that $\dbf \geq_T Z$ and $\dbf \not \geq_T Z'$. Indeed, if $\dbf \geq_T \emptyset'$, take $Z = \emptyset'$ and otherwise take $Z = \emptyset$.

Let $g \leq_T \dbf$ be the $\emptyset'$-escaping function of \Cref{lem:equiv-dnc-avoid}(2). 
Let $\mu_s$ be the standard left-c.e.\ approximation of the modulus $\mu$ of $Z'$, that is, $\mu_s(x)$ is the least~$t \leq s$ such that $Z'_t \uh x = Z'_s \uh x$. For every $x \in \NN$, the sequence $(\mu_s(x))_{s \in \NN}$ can only change values at most $x$ times (every time a program with Turing index less than $x$ stops), and let $m_0^x < \dots < m_i^x (= \mu(x))$ be those values for some $i < x$ which depends on $x$. Then, as $\emptyset' \geq Z$, let $e : \NN \to \NN$ be a computable function such that $W^{\emptyset'}_{e(x)}$ is the $\emptyset'$-enumerable set containing $\{0, \dots, x-1\}$ and the position in $F_{m_j^x}$ of the elements of $F_{m_j^x} \cap B$ for every $j < i$. 

Note that $\card W^{\emptyset'}_{e(x)} \leq x \times (k+1)$, hence, by hypothesis on $g$, letting $p_x = g(e(x), x \times (k+1))$, the $\dbf$-computable sequence $(p_x)_{x \in \NN}$ is such that $p_x \notin W^{\emptyset'}_{e(x)}$ for every $x$. In particular, $p_x \geq x$ for every~$x$ as $\{0, \dots, x-1\} \subseteq W^{\emptyset'}_{e(x)}$. Since $\dbf \not \geq_T Z'$, the sequence $(p_x)_{x \in \NN}$ does not dominate $\mu$, hence, for infinitely many $x$, we can $\dbf$-computably find some stage $s$ such that $\mu_s(x) \geq p_x$ and if $s$ is chosen to be minimal, this ensures that the element in position $p_x$ of $F_{\mu_s(x)}$ does not belong to~$B$. Let $S$ be this set of elements. Then $S \subseteq \bigcup_i F_i \setminus B$, and since~$p_x \geq x$, then $S$ is infinite.


\end{proof}

Consider for instance a stable computable version of $\coRT^2_2(120)$, that is, a computable coloring $f : [\NN]^2 \to 2$ such that for every~$x \in \NN$, $\lim_y f(x, y)$ exists, and which $f$-avoids the permutation~120. As explained in \Cref{sec:sep-perm-measure}, for every finite $f$-homogeneous set~$F \subseteq \NN$ for color~0, at most one element has limit~1. If we let $B = \{ x : \lim_y f(x, y) = 1 \}$ and $(F_n)_{n \in \NN}$ be a sequence of $f$-homogeneous sets for color~0 such that $\card F_n = n$ (if no such sequence exists, then by \Cref{lem:unbalanced-ramsey} there is a computable $f$-homogeneous set and we are done), then by \Cref{lem:DNC-avoid-infinite}, any $\emptyset'$-DNC degree bounds an infinite subset of $A = \{ x : \lim_y f(x, y) = 0 \}$, hence computes an infinite $f$-homogeneous set. One can therefore reprove the following theorem.

\begin{theorem}[Cervelle, Gaudelier and Patey~\cite{cervelle2024reverse}]\label[theorem]{thm:rt22-120-omega}
Every $\omega$-model of~$\RT^2_2(120)$ is a model of~$\RT^2_2$.
\end{theorem}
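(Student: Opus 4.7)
The plan is to combine three ingredients: $\RT^2_2(120)$ itself, the fact that $120$ being separable forces $\ADS$ (and hence $\COH$), and \Cref{lem:DNC-avoid-infinite}, so as to reproduce inside an arbitrary $\omega$-model the argument sketched informally just before the theorem statement. Let $\M$ be an $\omega$-model of $\RCA_0 + \RT^2_2(120)$ and fix a coloring $f \in \M$; I want to exhibit an infinite $f$-homogeneous set inside $\M$. Relativizing everything that follows to an oracle in $\M$ computing $f$, I may assume $f$ is computable.

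First I reduce to the case where $f$ is simultaneously stable and 120-avoiding. Since $120$ is separable, \Cref{prop:separable-implies-ads} together with the implication $\ADS \to \COH$ (Hirschfeldt--Shore) shows $\M \models \COH$; applying $\COH$ in $\M$ to the sequence $R_x = \{y : f(x,y) = 0\}$ produces a cohesive set $C \in \M$ on which the limit $d(x) = \lim_{y \in C} f(x,y)$ exists for every $x \in C$. Applying $\RT^2_2(120)$ inside $\M$ to the $C$-computable coloring $f \uh [C]^2$ then yields an infinite $H \subseteq C$ in $\M$ on which $f$ is stable and $f$-avoids the pattern $120$.

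Next I secure a $C'$-DNC function inside $\M$. The Mimouni--Patey construction recalled in the excerpt produces a computable coloring all of whose infinite 120-avoiding sets bound a $\emptyset'$-DNC function; relativizing this construction to $C$ and invoking $\RT^2_2(120)$ in $\M$ on the resulting $C$-computable coloring yields a $C'$-DNC function $g \in \M$. I then set $B = \{x \in H : d(x) = 1\}$, a $\Sigma^0_2(C)$ set, and observe that any $f$-homogeneous subset $F \subseteq H$ for colour $0$ satisfies $|F \cap B| \leq 1$: two elements $a < c$ of $F \cap B$ would give $f(a,c) = 0$, and by definition of $B$ one can pick $y \in H$ above $c$ with $f(a,y) = f(c,y) = 1$, realising the forbidden pattern $120$. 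Either \Cref{lem:unbalanced-ramsey} already supplies an infinite $C$-computable $f$-homogeneous set for colour $1$ in $\M$, and I am done, or for every $n$ I can $C$-computably produce a $0$-homogeneous $F_n \subseteq H$ of size at least $n$; in that case \Cref{lem:DNC-avoid-infinite} relativized to $C$ and applied with $k = 1$ using $g$ outputs an infinite $S \subseteq \bigcup_n F_n \setminus B$ lying in $\M$.

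Finally, each $s \in S$ has $d(s) = 0$, i.e.\ $f(s, y) = 0$ for cofinitely many $y \in H$; a greedy thinning computable from $S$ and $f$ — choosing $g_0 = \min S$ and at each stage $g_{i+1}$ the next element of $S$ with $f(g_j, g_{i+1}) = 0$ for all $j \leq i$, which exists since only finitely many constraints are active — extracts an infinite $f$-homogeneous set for colour $0$ inside $\M$. The main delicate point is oracle bookkeeping: Mimouni--Patey's construction and \Cref{lem:DNC-avoid-infinite} must be carried out relative to $C$, and uniformity in $C$ is what guarantees the resulting objects really lie in $\M$; once this is checked, the argument is just the stable illustration from before the theorem, embedded inside an arbitrary $\omega$-model.
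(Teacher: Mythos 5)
Your proposal follows the same route as the paper — cohesiveness from $\ADS$, a second application of $\RT^2_2(120)$ to land on a stable 120-avoiding set, relativized Mimouni--Patey for a DNC function, then \Cref{lem:unbalanced-ramsey} together with \Cref{lem:DNC-avoid-infinite} — but the oracle bookkeeping, which you correctly flag as the delicate point, is in fact wrong. You relativize Mimouni--Patey to the cohesive set $C$ and then try to run \Cref{lem:DNC-avoid-infinite} relative to $C$ using the resulting $C'$-DNC function $g$. However, the sequence of finite $0$-homogeneous blocks $F_n$ must be located inside $H$, and $H$, being a solution to the $\RT^2_2(120)$-instance $f\uh[C]^2$ inside $\M$, need not be $C$-computable at all; likewise $B = \{x \in H : d(x)=1\}$ is $\Sigma^0_2(H)$, not $\Sigma^0_2(C)$. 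So the relativized lemma's hypotheses are not met by the $C$-level objects you have, and a $C'$-DNC function is not known to be $H'$-DNC.

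The fix is exactly what the paper does: once $H$ (the paper's $X_1$) is obtained, discard $C$ and relativize the Mimouni--Patey coloring to $H$. Since $H \in \M$ and $\M$ is a Turing ideal, a third application of $\RT^2_2(120)$ to this $H$-computable coloring produces a 120-avoiding set in $\M$ that, together with $H$, computes an $H'$-DNC function $g \in \M$. Then $B$ is $\Sigma^0_2(H)$, the $(F_n)$ are $H$-computable, and \Cref{lem:DNC-avoid-infinite} relativized to $H$ applies with $k=1$, giving the infinite subset $S$ of the limit-$0$ elements in $\M$; your final greedy thinning is then correct. With this one correction the argument coincides with the paper's.
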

\begin{proof}
Let $\M$ be an $\omega$-model of~$\RT^2_2(120)$.
Given a coloring $f : [\NN]^2 \to 2$ in~$\M$, since $\RCA_0 \vdash \RT^2_2(120) \to \COH$ (see \Cref{prop:separable-implies-ads}), there is an infinite subset~$X_0 \subseteq \NN$ in~$\M$ on which $f$ is stable. Apply $\RT^2_2(120)$ to obtain an infinite subset $X_1 \subseteq X_0$ in~$\M$ which $f$-avoids the permutation~120. By Mimouni and Patey~\cite{mimouni2025ramseylike}, there is an~$X'_1$-DNC function~$h$ in~$\M$. By the argument above, $h \oplus X_1$ computes an infinite $f$-homogeneous set~$H$, which is therefore in~$\M$.
\end{proof}

Note that in this proof, three applications of $\RT^2_2(120)$ are used to prove $\RT^2_2$ in \Cref{thm:rt22-120-omega}.

\begin{remark}
The argument in \Cref{lem:DNC-avoid-infinite} admits many degrees of freedom, opening the door to many generalizations.
First, the notion of \qt{bad element} is absolute, in the sense that it does not depend on the previous choices of elements, while with non-stable colorings, one uses the notion of $X$-bad which depends on the reservoir~$X$, which itself depends on the choices of the previous elements.
Second, \Cref{lem:DNC-avoid-infinite} only states the existence of an infinite subset $S$ of $\bigcup_i F_i \setminus B$, but one could require that for some~$k \in \NN$, the set $S$ intersects simultaneously $F_i, F_{i+1}, \dots, F_{i+k}$ for infinitely many~$i \in \NN$.
We shall use both generalizations when dealing with arbitrary separable permutations, through \Cref{prop:2dnc-computes-homogeneous}.
\end{remark}

\bigskip
\noindent
\textbf{General case.} We now turn to the study of $\coRT^2_2(p)$ for arbitrary separable permutations~$p$.
As in \Cref{sec:sep-perm-measure}, we shall first prove the theorem for fractals, and deduce the general case since they form a basis for separable permutations.
By \Cref{prop:separable-implies-ads}, $\RT^2_2(p)$ implies $\COH$ for every separable permutation~$p$, so it would be sufficient to consider only stable instances of~$\coRT^2_2(p)$, as in \Cref{thm:rt22-120-omega}. We however prove it for every computable instance of~$\coRT^2_2(p)$.
It follows that $\coRT^2_2(p)$ is computably reducible to~$\RT^2_2(p)$, that is, every $\coRT^2_2(p)$-instance~$f$ computes an $\RT^2_2(p)$-instance~$g$, such that for every infinite $\RT^2_2(p)$-solution~$Y$ to~$g$, $f \oplus Y$ computes a $\coRT^2_2(p)$-solution to~$f$. Thus, $\RT^2_2(p)$ implies $\RT^2_2$ with only 2 applications.

\begin{definition}
Fix $k, n \geq 1$.
Let $f : [\NN]^2 \to 2$ be a coloring and $F$ be a set $f$-realizing the $k$-fractal of dimension~$n$.
The \emph{fractal decomposition} of~$F$ is the sequence $F_0 < \dots < F_{k-1}$ of subsets of~$F$
such that for every~$i < k$, $F_i$ is a $k$-fractal of dimension~$n-1$. We shall refer to the $F_i$'s as \emph{blocks}.
\end{definition}

Recall that given an infinite set~$X \subseteq \NN$, we say that an element~$x \in \NN$ has \emph{limit $c$ in~$X$} if for all but finitely many~$y \in X$, $f(x, y) = c$. The following definition generalizes the notion of~$X$-good/$X$-bad from \Cref{sec:sep-perm-measure}. Indeed, an $i$-fractal of dimension~1 is nothing but an $f$-homogeneous set for color~0 of size~$i$. With the following definition, a finite $f$-homogeneous set for color~$0$ is $(f, k)$-good for color~$0$ in~$X$ if it has at most $k-1$ $X$-bad elements, that is, at most $k-1$ elements which have limit~$1$ in~$X$.

\begin{definition}
Fix $k, i \geq 1$ and $n \in \NN$.
Let $f : [\NN]^2 \to 2$ be a coloring, $X$ be an infinite set and $c < 2$ be a color.
A finite set~$F \subseteq \NN$ $f$-realizing the $i$-fractal of dimension~$n$ is \emph{$(f,k)$-good for color~$c$ in~$X$} if
there is no subset~$G \subseteq F$ $f$-realizing the $k$-fractal of dimension~$n$ with all its elements having limit $1-c$ in~$X$.
Otherwise, $F$ is \emph{$(f,k)$-bad for color~$c$ in~$X$}.
\end{definition}

Note that in the case $n = 0$, $F$ is a singleton and its unique element does not have limit $1-c$ in~$X$.

The following lemma bounds the number of bad choices of fractals of dimension~$n-1$ given a good fractal of dimension~$n$. This yields a probabilistic algorithm to find an element which does not have limit~$1-c$ in~$X$ given an $i$-fractal~$F$ of dimension~$n$ which is $(f, k)$-good for color~$c$ in~$X$: pick a block~$F'$ at random in the fractal decomposition of~$F$. With probability $1-\frac{k-1}{i}$, $F'$ is an $i$-fractal of dimension~$n-1$ which is $(f, k)$-good for color~$c$ in~$X$. Iterate the process until we obtain an $i$-fractal of dimension~0 which is $(f,k)$-good for color~$c$ in~$X$, in other words, a singleton element which does not have limit~$1-c$ in~$X$. The probability of making a bad choice at some point is at most $\frac{(k-1) \times n}{i}$, which can be made arbitrarily small by considering $i$-fractals for very large~$i$'s.

\begin{lemma}\label[lemma]{lem:bad-blocks-bounded}
Fix $k, i, n \geq 1$.
Let $f : [\NN]^2 \to 2$ be a coloring, $X$ be an infinite set and $c < 2$ be a color.
Let $F \subseteq \NN$ be a set $f$-realizing the $i$-fractal of dimension~$n$ and let $F_0 < \dots < F_{i-1}$ be its fractal decomposition.
If $F$ is $(f, k)$-good for color~$c$ in~$X$, then there are at most $k-1$ many blocks which are $(f,k)$-bad for color~$c$ in~$X$.
\end{lemma}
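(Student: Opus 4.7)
The plan is to argue by contraposition: I will assume that at least $k$ of the blocks $F_0, \dots, F_{i-1}$ are $(f,k)$-bad for color~$c$ in~$X$ and build from these bad blocks a subset of~$F$ that $f$-realizes the $k$-fractal of dimension~$n$ with every element having limit $1-c$ in~$X$, contradicting the assumption that $F$ is $(f,k)$-good for color~$c$ in~$X$.

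Concretely, pick $k$ indices $j_0 < \dots < j_{k-1}$ among the bad blocks. By definition of $(f,k)$-badness, for each~$s < k$ there is a subset $G_s \subseteq F_{j_s}$ which $f$-realizes the $k$-fractal of dimension~$n-1$ and all of whose elements have limit $1-c$ in~$X$. Set $G = G_0 \cup \dots \cup G_{k-1}$. Every element of~$G$ then has limit $1-c$ in~$X$, so it only remains to check that $G$ $f$-realizes the $k$-fractal of dimension~$n$.

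The key point is that the outer summation operator used to build the $i$-fractal of dimension~$n$ from its blocks $F_0, \dots, F_{i-1}$ and the one used to build the $k$-fractal of dimension~$n$ from $k$ copies of the $k$-fractal of dimension~$n-1$ are both determined by the same parity of~$n-1$. Thus for any two blocks $F_{j_s} < F_{j_t}$, and any $x \in G_s \subseteq F_{j_s}$ and $y \in G_t \subseteq F_{j_t}$, the value $f(x,y)$ is exactly the constant color prescribed between distinct blocks of a $k$-fractal of dimension~$n$. Combined with the fact that each~$G_s$ internally $f$-realizes the $k$-fractal of dimension~$n-1$, this shows that $G$ $f$-realizes the direct (resp.\ skew) sum of $k$ copies of the $k$-fractal of dimension~$n-1$, which is the $k$-fractal of dimension~$n$.

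The only potential obstacle is making the block-pattern-matching argument precise, but this is essentially bookkeeping once one observes that the construction of the $r$-fractal of dimension~$n$ is uniform in~$r$: only the number~$r$ of copies varies, while the alternation between direct and skew sum depends solely on the dimension. So picking any $k$ of the $i$ blocks of~$F$ automatically yields, after restricting to suitable sub-fractals, a configuration realizing the $k$-fractal of dimension~$n$. The contradiction with the $(f,k)$-goodness of~$F$ then completes the proof.
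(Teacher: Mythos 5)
Your proof is correct and follows exactly the same route as the paper's: argue by contraposition, pick $k$ bad blocks, extract a $k$-fractal of dimension $n-1$ of limit-$(1-c)$ elements from each, and observe that their union realizes the $k$-fractal of dimension $n$ since the outer sum (direct or skew) depends only on the dimension, not on the branching factor. The paper states the last step without elaboration; your additional bookkeeping paragraph is just a more explicit version of the same observation.
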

\begin{proof}
Assuming by contrapositive that there exists indices $\ell_0 < \dots < \ell_{k-1} < i$ such that, for every $j < k$, the blocks $F_{\ell_j}$ is $(f,k)$-bad for color~$c$ in~$X$. Since $n \geq 1$ and by definition of a $(f,k)$-bad block, for every $j < k$ there exists a subset $G_{\ell_j} \subseteq F_{\ell_j}$ such that $G_{\ell_j}$ $f$-realizes the $k$-fractal of dimension $n-1$ and all its elements have limit $1-c$ in $X$.

Then, $G_{\ell_0} \cup \dots \cup G_{\ell_{k-1}}$ $f$-realizes the $k$-fractal of dimension $n$ and all its elements have limit $1-c$ in $X$, hence, $F$ is $(f, k)$-bad for color~$c$ in~$X$. 
\end{proof}

The proof of \Cref{lem:DNC-avoid-infinite} is based on a case analysis: either the $\emptyset'$-escaping function~$g$ computes $\emptyset''$, in which case we use the existence of a $\Delta^0_3$ solution, or $g$ has infinitely many relatively small values, and can be considered as a partial computably bounded $\emptyset'$-escaping function.

We now prove \Cref{prop:2dnc-computes-homogeneous}, which is the counterpart of \Cref{prop:2random-computes-homogeneous}, but using an elaboration of \Cref{lem:DNC-avoid-infinite} instead of the existence of a 2-random.
There is however an additional level of difficulty: the probabilistic algorithm given before \Cref{lem:bad-blocks-bounded} requires to make $n$ iterated queries to the $\emptyset'$-escaping function, asking each time to give an answer smaller than~$i$.

\begin{proposition}\label[proposition]{prop:2dnc-computes-homogeneous}
Fix a computable coloring $f : [\NN]^2 \to 2$, some color~$c < 2$ and some~$k, n \geq 1$.
Suppose that for every~$i \in \NN$ and every infinite computable set~$X$, the following holds:
\begin{itemize}
    \item[(1)] There is a subset $F \subseteq X$ $f$-realizing the $i$-fractal of dimension~$n$;
    \item[(2)] Every set~$F \subseteq X$ $f$-realizing the $i$-fractal of dimension~$n$ is $(f,k)$-good for color~$c$ in~$X$.
\end{itemize}
Then every $\emptyset'$-DNC degree $\dbf$ computes an infinite $f$-homogeneous set.
\end{proposition}

\begin{proof}
Fix $f$, $c$, $k$, $n$ and $\dbf$. 

Suppose $\dbf \not \geq_T \emptyset''$, otherwise, by Jockusch~\cite[Theorem 4.2]{jockusch1972ramsey}, $\dbf$ computes an infinite $f$-homogeneous set and we are done. Let $Z \leq_T \emptyset'$ be such that $\dbf \not \geq_T Z'$ and $\dbf \geq_T Z$.

Let $g \leq_T \dbf$ be the $\emptyset'$-escaping function of \Cref{lem:equiv-dnc-avoid}(2). Let $\mu_s$ be the standard left-c.e.\ approximation of the modulus $\mu$ of $Z'$, that is, $\mu_s(x)$ is the least~$t \leq s$ such that $Z'_t \uh x = Z'_s \uh x$. For every $x \in \NN$, the sequence $(\mu_s(x))_{s \in \NN}$ can only change values at most $x$ times (every time a program with Turing index less than $x$ stops), and let $m^x_0 < \dots < m^x_i (= \mu(x))$ be those values for some $i < x$ which depends on~$x$.

For every finite set $E \subseteq \NN$, let 
$$
X_E = \{y \in \NN : \forall x \in E, x < y \wedge f(x,y) = c\}
$$

Given an $i$-fractal $F$ of dimension~$n$, and $\sigma \in \NN^{\leq n}$, let $F(\sigma)$ be defined inductively as follows:
$F(\langle\rangle) = F$ and $F(w \cdot \tau) = F_w(\tau)$, where $F_0 < \dots < F_{i-1}$ is the fractal decomposition of~$F$, if $w < i$. If $w \geq i$, then $F(w \cdot \tau) = \emptyset$.
Note that if $F(\sigma) \neq \emptyset$, then $F(\sigma)$ is an $i$-fractal of dimension $n-|\sigma|$.

As $\emptyset' \geq Z$, can consider $e : \NN^3 \to \NN$ a computable function such that, for every~$\ell < n$, $(w_0, \dots, w_{\ell-1}) \in \NN^\ell$ and every finite set~$E \subseteq \NN$, $W^{\emptyset'}_{e(x, E, \langle w_0, \dots, w_{\ell-1} \rangle)}$ is the $\emptyset'$-c.e.\ set which, for every $j < x$ searches the smallest $F_j \subseteq X_E$ $f$-realizing the $m_j^x$-fractal of dimension $n$, and, if such an $F_j$ is found, outputs the indices of the bad blocks in the fractal decomposition of $F_j(\langle w_0, \dots, w_{\ell-1} \rangle)$.


For every~$\ell < n$, let $w_\ell$ be the $\dbf$-computable function defined inductively as follows:
$$
w_\ell(x, E) = \begin{cases}
    g(e(x, E, \langle\rangle), (k-1) \times x) & \mbox{ if } \ell = 0\\
    g(e(x, E, \langle w_0(x), \dots, w_{\ell-1}(x)\rangle), (k-1) \times x) & \mbox{ if } \ell > 0
\end{cases}
$$
\smallskip

\textbf{Claim 1:} For every finite set~$E \subseteq \NN$ such that $X_E$ is infinite, for every $x, j \in \NN$ such that  $\max \{ w_\ell(x, E) : \ell < n\}< m_j^x$ and every~$\ell \leq n$,
letting $F^j \subseteq X_E$ be the least $m^x_j$-fractal of dimension~$n$,
\begin{itemize}
    \item[(a)] $F^j(\langle w_0(x, E) \dots w_{\ell-1}(x, E)\rangle)$ is $(f,k)$-good for color~$c$ in~$X_E$;
    \item[(b)] $\card W^{\emptyset'}_{e(x, E, \langle w_0(x,E), \dots, w_{\ell-1}(x,E) \rangle)} \leq (k-1) \times x$
\end{itemize}
By induction on $\ell$. If $\ell = 0$, (a) is due to the fact that every fractal of dimension $n$ is $(f,k)$-good for color~$c$ in~$X_E$ and (b) follows from \Cref{lem:bad-blocks-bounded}. Then, assuming the property to be true for some $\ell < n$, the set $F^j(\langle w_0(x, E) \dots w_{\ell-1}(x, E)\rangle)$ is therefore an $m^x_j$-fractal of dimension~$n-\ell$ which is $(f, k)$-good for color~$c$ in~$X_E$. By \Cref{lem:bad-blocks-bounded}, there are at most $k-1$ blocks in the fractal decomposition of $F^j(\langle w_0(x, E) \dots w_{\ell-1}(x, E)\rangle)$ that are $(f,k)$-bad for color~$c$ in~$X_E$. The indices of these blocks are the one outputted by $W^{\emptyset'}_{e(x, E, \langle w_0(x,E), \dots, w_{\ell-1}(x,E) \rangle)}$ for every $j < x$ such that $F_j$ is found. Hence $\card W^{\emptyset'}_{e(x, E, \langle w_0(x,E), \dots, w_{\ell-1}(x,E) \rangle)} \leq (k-1) \times x$ and (b) holds. Thus, by definition of $g$, $w_{\ell}(x,E)$ does not belongs to $W^{\emptyset'}_{e(x, E, \langle w_0(x,E), \dots, w_{\ell-1}(x,E) \rangle)}$ and $F^j(\langle w_0(x, E), \dots, w_{\ell-1}(x, E), w_{\ell}(x,E)\rangle)$ is an $m^x_j$-fractal of dimension~$n-\ell-1$ which is $(f,k)$-good for color~$c$ in~$X_E$, so (a) holds.
This proves Claim~1.
\smallskip


We will define a $\dbf$-computable increasing sequence of computable Mathias conditions of the form $(E_0, X_{E_0}) \geq (E_1, X_{E_1}) \geq \dots$ such that for every~$s$, $\card E_s = s$.
Given $(E_s, X_{E_s})$, apply the following steps:
\begin{itemize}
    \item[(S1)] Search $\dbf$-computably for some~$x, j \in \NN$ such that 
$$\max \{ w_\ell(x, E_s) : \ell < n\}< m_j^x$$
Such $x, j$ is eventually found. Indeed, for infinitely many $x$ we have $\max \{ w_\ell(x, E_s) : \ell < n \} < \mu(x)$, otherwise $\dbf$ would compute $\emptyset'$, contradicting our assumption. 
    \item[(S2)] Once $x, j$ found, look for the smallest set $F \subseteq X_{E_s}$  $f$-realizing the $m_j^x$-fractal of dimension $n$. Such an $F$ will be found by (2) since $X_{E_s}$ is infinite.
    \item[(S3)] By Claim 1, $F_j(\langle w_0(x, E), \dots, w_{n-1}(x, E)\rangle)$ is $(f, k)$-good for color~$c$ in~$X_{E_s}$. Since $F_j(\langle w_0(x, E), \dots, w_{n-1}(x, E)\rangle)$ is an $i$-fractal of dimension~0, it is a singleton $\{x_0\}$. By definition of $(f, k)$-good for color~$c$ in~$X_{E_s}$, the set $X_{E_s \cup \{x_0\}}$ is infinite.
    Let $E_{s+1} = E_s \cup \{x_0\}$.
\end{itemize}
The set $H = \bigcup_s E_s$ is a $g$-computable $f$-homogeneous set for color~$c$.
\end{proof}

We can now prove our main proposition in the case of $k$-fractals.

\begin{proposition}\label[proposition]{prop:2dnc-fractal-avoiding}
    Let $f : [\NN]^2 \to 2$ be a computable coloring. For every $n,k \in \NN$, if $f$ avoids the $k$-fractal of dimension $n$, then every $\emptyset'$-DNC degree $\dbf$ computes an infinite $f$-homogeneous set.
\end{proposition}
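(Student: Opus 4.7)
The plan is to mirror the structure of the probabilistic argument from \Cref{prop:probabilistic-fractal-avoiding}, replacing the 2-random ingredient \Cref{prop:2random-computes-homogeneous} by its $\emptyset'$-DNC analog \Cref{prop:2dnc-computes-homogeneous}, which has already been established. Specifically, I would prove by induction on $n$ the following relativized statement: for every $k \in \NN$ and every infinite computable set $D \subseteq \NN$ $f$-avoiding the $k$-fractal of dimension $n$, every $\emptyset'$-DNC degree $\dbf$ computes an infinite $f$-homogeneous subset of $D$. The proposition follows by taking $D = \NN$.

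The base case $n = 0$ is vacuous since no set avoids the singleton pattern. The case $n = 1$ is handled by \Cref{lem:unbalanced-ramsey}: avoiding the $k$-fractal of dimension $1$ means $D$ has no $f$-homogeneous subset of size $k$ for color $0$, so there is an infinite computable $f$-homogeneous subset of $D$ for color $1$.

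For the inductive step, I would fix an infinite computable $D$ $f$-avoiding the $k$-fractal of dimension $n+1$, assume by contradiction that some fixed $\emptyset'$-DNC degree does not compute any infinite $f$-homogeneous subset of $D$, and invoke the induction hypothesis to obtain that every infinite computable $X \subseteq D$ contains, for each $k' \in \NN$, a subset $f$-realizing the $k'$-fractal of dimension $n$. Setting $c = 1$ if $n$ is even and $c = 0$ otherwise, so that color $1-c$ is the between-block color dictated by the direct/skew sum used to assemble dimension $n+1$, I would then split along the same dichotomy as in \Cref{prop:probabilistic-fractal-avoiding}. In the first case, there is an infinite computable $G \subseteq D$ such that for every infinite computable $H \subseteq G$, every $S \subseteq H$ $f$-realizing the $k$-fractal of dimension $n$ is $(f,k)$-good for color $c$ in $H$; passing to the coloring induced by $f$ on $G$ via the computable increasing enumeration of $G$, the hypotheses of \Cref{prop:2dnc-computes-homogeneous} are exactly met, and the conclusion transfers back to an infinite $f$-homogeneous subset of $D$. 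In the second case, one iteratively builds blocks $S_0 < S_1 < \dots < S_{k-1}$, each $f$-realizing the $k$-fractal of dimension $n$ with all elements having limit $1-c$ in successive reservoirs (where each new reservoir waits for the previously chosen elements to stabilize to limit $1-c$); the union then $f$-realizes the $k$-fractal of dimension $n+1$, contradicting the avoidance hypothesis.

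The main conceptual obstacle, upgrading from \qt{some 2-random computes a solution} to \qt{every $\emptyset'$-DNC degree computes a solution}, has already been absorbed into \Cref{prop:2dnc-computes-homogeneous} via \Cref{lem:DNC-avoid-infinite} and its iterated-choice variant. What remains is essentially bookkeeping: relativizing the induction hypothesis cleanly to a computable subset $G$ so as to supply clause~(1) of \Cref{prop:2dnc-computes-homogeneous}, and checking in the contradictory Case~2 that the parity of $n$ matches the color $1-c$ used to glue blocks, so that the assembled configuration is indeed a $k$-fractal of dimension $n+1$ rather than of some other shape.
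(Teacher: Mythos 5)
Your proposal is correct and follows essentially the same route as the paper: the same induction on the dimension $n$, the base cases handled vacuously and by \Cref{lem:unbalanced-ramsey}, the same dichotomy in the inductive step (either \Cref{prop:2dnc-computes-homogeneous} applies after re-indexing along $G$, or one greedily assembles a $k$-fractal of dimension $n+1$ to contradict the avoidance hypothesis), and the same parity bookkeeping for $c$. You also correctly identify that all of the genuine new work was already absorbed into \Cref{prop:2dnc-computes-homogeneous}, so that this proposition only differs from \Cref{prop:probabilistic-fractal-avoiding} in the treatment of Case 1.
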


\begin{proof}
The proof is very similar to that of \Cref{prop:probabilistic-fractal-avoiding}, except Case~1 which is treated differently.
Fix a computable coloring $f : [\NN]^2 \to 2$.  By induction on $n$, we will show that for any $k \in \NN$, any infinite computable set $D \subseteq \NN$ $f$-avoiding the $k$-fractal of dimensions $n$ and any $\emptyset'$-DNC degree $\dbf$, $\dbf$ computes an infinite $f$-homogeneous set.

The case $n = 0$ is vacuously true, as no set $f$-avoids the singleton pattern.

In the case $n = 1$, if some infinite computable set~$D$ $f$-avoids the $k$-fractal of dimension~1, then there is no $f$-homogeneous subset of~$D$ for color~0 of size~$k$. Then, by \Cref{lem:unbalanced-ramsey}, there exists an infinite computable $f$-homogeneous subset of~$D$ for the color $1$

Assume the property to be true for some $n \in \NN$. Fix an infinite computable set $D$ such that some $\emptyset'$-DNC degree $\dbf$ does not compute any infinite $f$-homogeneous set. By our induction hypothesis, for every $k' \in \NN$, and every infinite computable set $X \subseteq D$, there is some set $S \subseteq X$ $f$-realizing the $i$-fractal of dimension~$n$.

Fix a size $k \in \NN$ and let $c$ be equal to $1$ if $n$ is even and $0$ if $n$ is odd. Consider the two following cases:
\smallskip


\textbf{Case 1:} There exists some infinite computable set $G \subseteq D$, such that, for every infinite computable subset $H \subseteq G$, every $S \subseteq H$ $f$-realizing the $k$-fractal of dimension $n$ has at least one element which does not have limit $1-c$ in $H$.

Let $G = \{x_0 < x_1 < \dots \}$ and let $g : [\NN]^2 \to 2$ be the computable coloring defined by $g(i,j) = f(x_i,x_j)$ for every $i < j$. Fix some infinite computable subset $H \subseteq \NN$ and some $i \in \NN$. By our hypothesis on $G$, every $S \subseteq \NN$ $g$-realizing the $i$-fractal of dimension $n$ will be $(g,k)$-good for color $c$ in $H$ 
and by the induction hypothesis, there will be some subset $S \subseteq H$ $g$-realizing the $i$-fractal of dimension $n$.

Hence, by \Cref{prop:2dnc-computes-homogeneous}, $\dbf$ computes an infinite $f$-homogeneous set, contradicting our assumption. Hence, this case is impossible.

\smallskip

\textbf{Case 2:} For every infinite computable set $G \subseteq D$, there exists an infinite computable set $H \subseteq G$ and some $S \subseteq H$ $f$-realizing the $k$-fractal of dimension $n$ with all its element having limit $1 - c$ in $H$. This case is treated exactly as in the proof of \Cref{prop:probabilistic-fractal-avoiding}.

\end{proof}

\begin{theorem}\label[theorem]{thm:cort22-sep-dnczp}
For every separable permutation~$p$ and every computable instance of~$\coRT^2_2(p)$,
every $\emptyset'$-DNC degree computes a solution.
\end{theorem}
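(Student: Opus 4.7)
The plan is to combine two previously established results in a direct way, closely mirroring the structure of the proof of \Cref{thm:avoid-separable-randomness}. First, I would fix a separable permutation $p$ and a computable coloring $f : [\NN]^2 \to 2$ avoiding $p$, which is an arbitrary computable instance of $\coRT^2_2(p)$. The crucial observation, immediate from the definition of $f$-avoidance, is that avoiding $p$ propagates upward to avoiding every $\RT^2_2$-pattern that contains $p$ as a sub-pattern: if $f$ realized some larger pattern, one could restrict to the sub-positions corresponding to $p$ and thereby realize $p$ itself.

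Next, I would apply \Cref{prop:fractal-basis-separable} with any fixed $k \geq 2$: the family of $k$-fractals is a basis for the class of separable permutations, so there exists $n \in \NN$ such that $p$ is a sub-pattern of the $k$-fractal of dimension $n$. By the preceding observation, $f$ also avoids this $k$-fractal. Then I would invoke \Cref{prop:2dnc-fractal-avoiding}, which establishes precisely the analogous statement for $k$-fractals, to conclude that every $\emptyset'$-DNC degree computes an infinite $f$-homogeneous set. Such a set is, by definition, a solution to the $\coRT^2_2(p)$ instance $f$.

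I do not anticipate a genuine obstacle at this stage, since the theorem is a clean packaging of two structural results. All the technical difficulty has already been absorbed into \Cref{prop:2dnc-fractal-avoiding}, whose proof depends in turn on \Cref{prop:2dnc-computes-homogeneous} — where the intricate use of a $\emptyset'$-escaping function to iterate $n$ times through the fractal decomposition takes place — and on \Cref{prop:fractal-basis-separable}, which carries out the combinatorial reduction to $k$-fractals. The only subtlety worth double-checking during write-up is that the basis property is used in the form \emph{sub-pattern}, not \emph{induced sub-structure}, so that downward propagation of avoidance is automatic.
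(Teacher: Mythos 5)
Your proof is correct and follows precisely the paper's own argument: fix $f$ avoiding $p$, use \Cref{prop:fractal-basis-separable} to find $k,n$ with $p$ a sub-pattern of the $k$-fractal of dimension~$n$ (so $f$ also avoids that fractal, by the upward-propagation observation you correctly spell out), then invoke \Cref{prop:2dnc-fractal-avoiding}. The paper compresses the sub-pattern/avoidance step into the single phrase ``there exists some $k,n$ such that it avoids the $k$-fractal,'' but the reasoning is identical.
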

\begin{proof}
Let $f : [\NN]^2 \to 2$ be a computable coloring avoiding~$p$.
By \Cref{prop:fractal-basis-separable}, there exists some $k,n \in \NN$ such that it avoids the $k$-fractal of dimension $n$. Then by \Cref{prop:2dnc-fractal-avoiding} every $\emptyset'$-DNC degree computes an infinite $f$-homogeneous set.
\end{proof}

As mentioned, Mimouni and Patey~\cite{mimouni2025ramseylike} proved that $\RT^2_2(p)$ for any non-constant pattern~$p$ is not computably true by constructing a computable coloring $f : [\NN]^2 \to 2$ such that for every infinite set~$H$ $f$-avoiding any pattern, $H$ computes a $\emptyset'$-DNC function. Combined with \Cref{thm:cort22-sep-dnczp}, this yields the following corollary:

\begin{corollary}\label[corollary]{cor:seprt22-rt22}
$\SepRT^2_2$ implies $\RT^2_2$ over $\omega$-models.
\end{corollary}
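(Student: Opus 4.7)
The plan is to combine \Cref{thm:cort22-sep-dnczp} (in its relativized form) with the computable coloring of Mimouni and Patey inside an arbitrary $\omega$-model of $\SepRT^2_2$. Fix an $\omega$-model $\M \models \SepRT^2_2$ and a coloring $f \in \M$; I will produce an infinite $f$-homogeneous set lying in~$\M$.

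First, I would apply $\SepRT^2_2$ in~$\M$ to~$f$ itself. This yields an infinite set $H \in \M$ that $f$-avoids some separable permutation~$p$. Reading $f$ along the increasing enumeration of~$H$ produces a coloring $\hat f$ which is $(f \oplus H)$-computable and avoids~$p$ on all of~$\NN$, so $\hat f$ is a genuine instance of $\coRT^2_2(p)$. Because $\M$ is an $\omega$-model, $p$ is a standard separable permutation, so no non-standardness issue arises.

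Next I would manufacture a $(f \oplus H)'$-DNC function inside~$\M$. By the Mimouni--Patey construction cited just above the corollary, relativized to $f \oplus H$, there is an $(f \oplus H)$-computable coloring $g : [\NN]^2 \to 2$ such that every infinite set $g$-avoiding any non-constant pattern computes an $(f \oplus H)'$-DNC function. Since $g \in \M$, a second application of $\SepRT^2_2$ in~$\M$ gives an infinite $G \in \M$ that $g$-avoids some separable permutation, and hence computes an $(f \oplus H)'$-DNC function $d \in \M$.

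Finally, I would feed $d$ into the relativization of \Cref{thm:cort22-sep-dnczp}: since $\hat f$ is an $(f \oplus H)$-computable instance of $\coRT^2_2(p)$ and $d$ is $(f \oplus H)'$-DNC, the join $d \oplus f \oplus H$ computes an infinite $\hat f$-homogeneous set, which translates back to an infinite $f$-homogeneous subset of~$H$. This set lies in~$\M$, establishing $\M \models \RT^2_2$. The only delicate point is checking that the proof of \Cref{thm:cort22-sep-dnczp} relativizes uniformly (so that the $\emptyset'$-DNC hypothesis becomes $Z'$-DNC for a $Z$-computable instance); this is routine given that the entire argument in \Cref{sec:2dnc-sep} is carried out with $\emptyset'$-c.e.\ predicates that relativize transparently.
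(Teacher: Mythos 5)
Your proof is correct and follows essentially the same route as the paper's: one application of $\SepRT^2_2$ to~$f$ to extract a set avoiding a separable permutation, a second to the relativized Mimouni--Patey coloring to place a relativized DNC function inside the model, and then the relativized form of \Cref{thm:cort22-sep-dnczp}. You spell out the relativization base ($f \oplus H$) and the pulled-back coloring $\hat f$ a bit more explicitly than the paper does, but the argument is the same.
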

\begin{proof}
Let $\M$ be an $\omega$-model of~$\SepRT^2_2$
and let $f : [\NN]^2 \to 2$ be a coloring in~$\M$.
By $\SepRT^2_2$, there is an infinite set~$X \in \M$ which $f$-avoids some separable permutation~$p$.
By Mimouni and Patey~\cite{mimouni2025ramseylike}, there is an $X'$-DNC function~$g$ in~$\M$.
By a relativized version of \Cref{thm:cort22-sep-dnczp}, $g \oplus X$ computes an infinite $f$-homogeneous set~$H$.
In particular, $H \in \M$ since~$\M$ is a Turing ideal.
\end{proof}

We are now ready to prove our main theorem.

\begin{repmaintheorem}{maintheorem1}
For every pattern~$p$, $\RT^2_2(p)$ implies $\RT^2_2$ over $\omega$-models if and only if $p$ is a separable permutation.
\end{repmaintheorem}
\begin{proof}
If $p$ is a separable permutation, then $\RCA_0 \vdash \RT^2_2(p) \to \SepRT^2_2$, so by \Cref{cor:seprt22-rt22}, $\RT^2_2(p)$ implies $\RT^2_2$ over $\omega$-models.
If $p$ is not a separable permutation, then by \Cref{cor:sep-permutation-not-rt22}, $\RT^2_2(p)$ does not imply $\RT^2_2$ over~$\omega$-models.
\end{proof}

Note that the bound of \Cref{thm:cort22-sep-dnczp} is not optimal as for every separable permutation~$p$, every computable instance of~$\coRT^2_2(p)$ admits an $\emptyset'$-computable solution, while there is no such $\emptyset'$-DNC function. Moreover, for the case $p = 120$, Cervelle, Gaudelier and Patey~\cite{cervelle2024reverse} proved that $\ADS$ implies $\coRT^2_2(120)$ over~$\RCA_0$, and by Hirschfeldt and Shore~\cite{hirschfeldt2007combinatorial}, $\ADS$ does not even imply the existence of a DNC function over $\omega$-models.

\section{Linear orders and measure}\label[section]{sec:linear-order-measure}

In \Cref{sec:sep-perm-measure,sec:2dnc-sep}, we proved that $\coRT^2_2(p)$ admits probabilistic solutions when $p$ is a separable permutation, and gave computability-theoretic upper bounds in terms of relativized diagonal non-computability. In this section, we prove the tightness of \Cref{thm:avoid-separable-randomness} by proving that if $p$ is a pattern which is not a separable permutation, then $\coRT^2_2(p)$ admits no probabilistic solutions, in the sense that there is a computable instance such that the measure of oracles computing a solution is~0. For this, we use some interactions between computable linear orders and measure.


Given a linear order $\L = (\NN, <_\L)$ of order type $\omega+\omega^*$.
We write $A(\L)$ for the $\omega$-part of $\L$, that is, the set $\{ x \in \NN : \forall^\infty y\ x <_\L y \}$.
A sequence is \emph{$\L$-monotonous} if it is $\L$-ascending or $\L$-descending.

The following proposition is a consequence of \cite[Theorem 4.1,Theorem 4.2]{jockusch1968semirecursive}. Indeed, for every computable linear order $\L = (\NN, <_\L)$ of order type $\omega+\omega^*$, by \cite[Theorem 4.1]{jockusch1968semirecursive} $A(\L)$ and $\overline{A}(\L)$ are semirecursive. If furthermore $\L$ has no infinite computable $\L$-monotonous sequence, then $A(\L)$ and  $\overline{A}(\L)$ are both immune, hence by \cite[Theorem 4.2]{jockusch1968semirecursive} they are both hyperimmune.
We give a more direct proof without using the notion of semirecursive set.

\begin{proposition}\label[proposition]{prop:sads-bihyperimmune}
Let $\L = (\NN, <_\L)$ be a computable linear order of order type $\omega+\omega^*$, with no computable infinite $\L$-monotonous sequence. Then $A(\L)$ is bi-hyperimmune.
\end{proposition}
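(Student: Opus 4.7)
The plan is to prove the contrapositive: if either $A(\L)$ or $\overline{A(\L)}$ fails to be hyperimmune, then an infinite computable $\L$-monotonous sequence exists. The key structural observation, which I would record at the outset, is that since $\L$ has order type $\omega + \omega^*$, every element of $A(\L)$ is $\L$-below every element of $\overline{A(\L)}$. In particular, for any finite set $F \subseteq \NN$, if $F$ meets $A(\L)$ then $\min_\L F$ already lies in $A(\L)$, and dually if $F$ meets $\overline{A(\L)}$ then $\max_\L F \in \overline{A(\L)}$.

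To show $A(\L)$ is hyperimmune, I would assume otherwise and fix a computable array $(F_n)_{n \in \NN}$ of pairwise disjoint non-empty finite sets with $F_n \cap A(\L) \neq \emptyset$ for every $n$. Setting $a_n = \min_\L F_n$ gives, by the observation above, a computable sequence of pairwise distinct elements of $A(\L)$ (distinctness follows from the pairwise disjointness of the $F_n$'s). From this I would greedily extract an infinite $\L$-ascending sub-sequence: put $b_0 = a_0$, and having defined $b_k$, let $b_{k+1}$ be the first $a_n$ in the enumeration with $a_n >_\L b_k$.

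The main obstacle is verifying that this greedy search never stalls. This is where the order-type hypothesis is essential: because $A(\L)$ has $\L$-order type $\omega$, only finitely many elements of $A(\L)$ lie $\L$-below the current $b_k$, hence only finitely many of the $a_n$'s do, so an $a_n$ with $a_n >_\L b_k$ must eventually appear. The resulting sequence $(b_k)$ is a computable infinite $\L$-ascending sequence, contradicting the hypothesis on $\L$. The argument for $\overline{A(\L)}$ is entirely dual: given a witnessing computable array $(F_n)$, one sets $a_n = \max_\L F_n$, uses that $\overline{A(\L)}$ has $\L$-order type $\omega^*$ to guarantee the analogous greedy step finds a next term, and thereby extracts an infinite computable $\L$-descending sequence, again a contradiction.
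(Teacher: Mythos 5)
Your proof is correct and follows essentially the same approach as the paper's: both extract $\min_\L F_n$ (resp.\ $\max_\L F_n$) from a witnessing array, observe these land in $A(\L)$ (resp.\ $\overline{A(\L)}$) because $A(\L)$ is an $\L$-downward-closed initial segment of order type $\omega$, and from the resulting infinite computable enumeration of $A(\L)$ obtain a computable $\L$-ascending sequence. The paper states this last step without proof (``the set $H$ contains a computable infinite $\L$-ascending sequence''); your greedy extraction is exactly the argument that justifies it, and the only thing you might add is that one should also require $b_{k+1} > b_k$ in the standard order so that the output is a genuine ascending set, which costs nothing since the same finiteness argument applies.
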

\begin{proof}
Suppose for the contradiction that $A(\L)$ is not hyperimmune.
Let $(F_n)_{n \in \NN}$ be a c.e.\ array such that $F_n \cap \A(\L) \neq \emptyset$ for every~$n \in \NN$.
Then, the set $H = \{ \min_\L(F_n) : n \in \NN \}$ is an infinite c.e.\ subset of~$A(\L)$. The set~$H$ contains  a computable infinite $\L$-ascending sequence, contradicting the hypothesis.
By a dual argument, $\overline{A}(\L)$ is hyperimmune since there is no computable infinite $\L$-descending sequence.
\end{proof}

Mileti~\cite[Theorem 5.2.6]{mileti2004partition} proved that for every infinite hyperimmune set~$A \subseteq \NN$, the measure of oracles computing an infinite subset of~$A$ is~0. It follows that if $\L = (\NN, <_\L)$ is a computable instance of~$\SADS$ with no computable solution, the measure of oracles computing a solution is~0. We prove it directly for the sake of containment and simplicity.

\begin{proposition}\label[proposition]{prop:sads-measure-0}
Let $\L = (\NN, <_\L)$ be a computable linear order of order type $\omega+\omega^*$, with no computable infinite $\L$-monotonous sequence. The measure of oracles computing an infinite $\L$-monotonous sequence is~0.  
\end{proposition}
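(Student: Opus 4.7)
The plan is to reduce the statement to the standard fact that for any hyperimmune set $A$, the measure of oracles computing an infinite subset of $A$ is zero, and then give a direct construction of the required c.e.\ array contradicting hyperimmunity. Both ingredients are already hinted at in the excerpt (the reduction uses \Cref{prop:sads-bihyperimmune}, and the measure-theoretic fact is due to Mileti).

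For the reduction, by \Cref{prop:sads-bihyperimmune} both $A(\L)$ and $\overline{A}(\L)$ are hyperimmune. Since $\L$ has order type $\omega+\omega^*$, the $\omega^*$-part admits no infinite $\L$-ascending chain and the $\omega$-part admits no infinite $\L$-descending chain. Hence any infinite $\L$-ascending sequence has cofinitely many elements in $A(\L)$, and any infinite $\L$-descending sequence has cofinitely many elements in $\overline{A}(\L)$. In particular, every oracle computing an infinite $\L$-monotonous sequence computes an infinite subset of one of these two hyperimmune sets, so it suffices to show that each such class has measure zero.

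For the core claim, I would fix a Turing functional $\Phi_e$, assume the class $\mathcal{U} = \{X : \Phi_e^X \text{ is an infinite subset of } A\}$ has positive measure, and derive a contradiction with hyperimmunity of $A$. First, by the Lebesgue density theorem, pass to a string $\sigma$ on which the relative measure of $\mathcal{U}$ exceeds $3/4$. Then, given a threshold $m_n$ larger than every previously enumerated element, effectively enumerate an antichain of strings $\tau \succeq \sigma$ together with values $k_\tau \in \Phi_e^\tau$ with $k_\tau > m_n$, halting as soon as the enumerated cylinders cover relative measure exceeding $5/8$ of $[\sigma]$; set $F_n = \{ k_\tau : \tau \text{ enumerated} \}$. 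The enumeration terminates because every $X \in \mathcal{U} \cap [\sigma]$ eventually contributes such a prefix, so $F_n$ is finite and uniformly c.e. Combining the $3/4$ and $5/8$ bounds, the union of enumerated cylinders meets $\mathcal{U} \cap [\sigma]$, and any $X$ in this intersection witnesses $k_\tau \in \Phi_e^X \subseteq A$ for some enumerated $\tau$, yielding $F_n \cap A \neq \emptyset$. The $F_n$ are pairwise disjoint by the threshold rule.

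The main obstacle is ensuring that the search for the antichain terminates with the required measure bound: without passing to a density point, the measure of prefixes forcing $\Phi_e$ to output a large value might not exceed $1-\mu(\mathcal{U})$, in which case a naive inclusion-exclusion with $\mathcal{U}$ gives only a vacuous intersection. Passing to $\sigma$ boosts the relative measure of $\mathcal{U}$ close enough to $1$ that any two thresholds summing to more than $1$ work, which is precisely what enables the contradiction with hyperimmunity.
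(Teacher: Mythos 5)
Your proposal is correct, and it takes the alternative route the paper itself flags: immediately before its proof, the paper notes that Mileti's theorem (for a hyperimmune set $A$, the measure of oracles computing an infinite subset of $A$ is $0$) combined with \Cref{prop:sads-bihyperimmune} already gives the result, but it elects to give a direct argument ``for the sake of containment and simplicity.'' You instead follow the Mileti route, and you additionally supply a proof of that lemma, which is fine. Your reduction step is sound: since the $\omega^*$-part admits no infinite $\L$-ascending chain and $A(\L) <_\L \overline{A(\L)}$, every infinite $\L$-ascending sequence actually lies entirely inside $A(\L)$ (you only claim cofinitely many, which is weaker but suffices), and dually for descending sequences. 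Your hyperimmunity argument is a correct density-plus-majority-vote construction of a computable array $(F_n)$ meeting $A$; the specific constants $3/4$ and $5/8$ are immaterial as long as the density bound and the halting threshold sum to more than $1$ and the threshold is attainable, which they do. The paper's direct proof uses the same density/majority ingredients but applies them to $\L$ itself: it finds, computably in $x \in A(\L)$, a finite set $W_x$ of strings of total relative measure $> 1/2$ on which the functional already outputs something $<_\L$-above $x$, observes that at least one witnesses a genuine partial ascending sequence inside $A(\L)$, and takes the $<_\L$-least of the last outputs (which is in $A(\L)$ since $A(\L)$ is $<_\L$-downward closed), thereby iterating to produce a computable infinite $\L$-ascending sequence. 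What your approach buys is modularity and a reusable lemma about hyperimmune sets; what the paper's buys is self-containment and a more transparent view of where the computable monotone sequence comes from.
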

\begin{proof}
Assume by contradiction that this measure is non-zero. There are countably many functionals $(\Phi_e)_{e \in \NN}$, hence there exists one such functional $\Phi$ such that the class of oracles $A \subseteq \NN$ for which $\Phi^A$ is an infinite $\L$-monotonous sequence is also of non-zero measure (as a countable union of measure zero sets is still of measure zero). Without any loss of generality, assume that $\Phi$ only outputs $\L$-ascending sequences for every oracle.

By Lebesgue's density lemma, there exists a finite binary sequence $\sigma$ such that the measure of oracles $A$ in the open class $[\sigma] := \{A : \sigma \prec A\}$ such that $\Phi^A$ is an infinite $\L$-ascending sequence is greater than half the measure of $[\sigma]$. By considering a functional $\Psi$ such that $\Psi^A = \Phi^{\sigma\cdot A}$, we obtain a functional outputting an infinite $\L$-ascending sequence for a set of oracles of measure greater than~$0.5$.

For every $x \in \A(\L)$, there are only finitely many elements $<_{\L}$-smaller than $x$, hence every infinite $\L$-ascending sequence (in particular those outputted by $\Psi$) is eventually bigger than $x$. Therefore, for every such $x$, there exists a finite set $W_x$ of finite binary sequences such that $[W_x] := \bigcup_{\sigma \in W_x} [\sigma]$ is of measure greater than $0.5$ and such that for every $\sigma \in W_x$, $\Psi^{\sigma}$'s output is a finite $\L$-ascending sequence whose last element is $<_{\L}$-bigger than $x$. At least one of those $\sigma$ is a prefix of a set $A$ such that $\Psi^A$ is an infinite $\L$-ascending sequence, as we cannot have two disjoint sets of measure greater than $0.5$ in a set of measure $1$, hence, for this $\sigma$, the last element outputted by $\Psi^{\sigma}$ will also be in $\A(\L)$. Therefore, by considering the last elements of the sequences $\Psi^{\sigma}$ for $\sigma \in W_x$ and by picking the $<_{\L}$-smallest of them, we obtained an element $y \in \A(\L)$ such that $x <_{\L} y$. Such sets $W_x$ and elements $y$ can be computably uniformly found for every $x \in \A(\L)$. Thus, using $\Psi$ we can find a computable infinite $\L$-ascending sequence, contradicting our assumptions on $\L$.
\end{proof}

We shall see in \Cref{thm:linear-probabilistic-noncomputable} that the previous proposition does not hold for arbitrary computable linear orderings.
One can derive many consequences from \Cref{prop:sads-bihyperimmune} and \Cref{prop:sads-measure-0}.
First of all, we reprove \Cref{prop:separable-trivial-ads} in the particular case of linear orders of order type $\omega + \omega^*$.

\begin{corollary}\label[corollary]{cor:linear-separable-trivial}
Let $\L = (\NN, <_\L)$ be a computable linear order of order type~$\omega+\omega^*$ avoiding any separable pattern.
Then there is an infinite computable $\L$-monotonous sequence.
\end{corollary}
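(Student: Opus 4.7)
The plan is to obtain the corollary by combining the probabilistic result for separable permutations (\Cref{thm:avoid-separable-randomness}) with the measure-theoretic characterization of computable $(\omega+\omega^*)$-orders without computable monotonous sequences (\Cref{prop:sads-measure-0}). The key conceptual step is to view $\L$ as a transitive $\RT^2_2$-coloring and transfer the separable-avoidance hypothesis from the order-theoretic to the combinatorial setting.

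More precisely, I would first define the computable coloring $f : [\NN]^2 \to 2$ by setting $f(x,y) = 0$ if $x <_\L y$ and $f(x,y) = 1$ otherwise, for $x <_\NN y$. Under this identification, infinite $f$-homogeneous sets for color~$0$ are exactly infinite $\L$-ascending sequences, and infinite $f$-homogeneous sets for color~$1$ are exactly infinite $\L$-descending sequences. Since $\L$ is a linear order, $f$ avoids both non-transitivity patterns (\ref{eq:non-transitivity}); and by hypothesis, $\L$ avoids some separable permutation $p$, which means (translating the permutation to its pattern $p_\pi$) that $f$ avoids the pattern $p$ as well. Thus $f$ is a computable instance of $\coRT^2_2(p)$ for a separable permutation $p$.

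Now I would invoke \Cref{thm:avoid-separable-randomness}: every $2$-random set computes an infinite $f$-homogeneous set, i.e., an infinite $\L$-monotonous sequence. Because the class of $2$-random sets has measure~$1$, the class of oracles computing an infinite $\L$-monotonous sequence also has measure~$1$. Assume now, for contradiction, that there is no infinite computable $\L$-monotonous sequence. Since $\L$ has order type $\omega+\omega^*$, \Cref{prop:sads-measure-0} applies and tells us that the measure of oracles computing such a sequence is~$0$. This contradicts the previous paragraph, so an infinite computable $\L$-monotonous sequence must exist.

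I do not expect any serious obstacle: every ingredient is already in place, and the only thing to check carefully is that the transfer from \qt{$\L$ avoids the separable permutation $p$} to \qt{$f$ is a $\coRT^2_2(p)$-instance} is faithful under the identification of patterns with permutations used in \Cref{sec:irreducible-convergent}. The argument is short and serves mostly as an illustration, in the restricted $\omega+\omega^*$ setting, of the general probabilistic methodology underlying \Cref{prop:separable-trivial-ads}.
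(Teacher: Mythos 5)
Your proof is correct and follows exactly the same route as the paper: apply \Cref{thm:avoid-separable-randomness} to conclude that the measure of oracles computing an infinite $\L$-monotonous sequence is~$1$, then invoke \Cref{prop:sads-measure-0} to rule out the case where no computable such sequence exists. The only difference is that you explicitly spell out the identification of $\L$ with a transitive $\RT^2_2$-coloring, which the paper treats as implicit.
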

\begin{proof}
By \Cref{thm:avoid-separable-randomness}, the measure of oracles computing an infinite $\L$-monoto\-nous sequence is~1. Thus, by \Cref{prop:sads-measure-0}, there is a computable infinite such sequence.
\end{proof}



The following proposition yields an example of an instance of~$\SADS$ avoiding every pattern, except the separable permutations. Indeed, since it is a linear order, the only patterns appearing are permutations, and by \Cref{thm:characterization-sep-bose}, if a permutation is not separable, it contains either 1302 or 2031 as sub-pattern, and therefore does not appear in the linear ordering.

Furthermore, since this instance of~$\SADS$ does not contain any solution computable in the instance, by \Cref{cor:linear-separable-trivial}, every separable pattern appears in it. This linear order therefore has the property that no further pattern can be avoided without computing a solution.

\begin{proposition}\label[proposition]{prop:linear-order-avoid-all-but-sep}
There exists a linear order $\L = (\NN, <_\L)$ of order type $\omega+\omega^{*}$ avoiding the permutations 1302 and 2031, with no $\L$-computable infinite $\L$-monotonous sequence.
\end{proposition}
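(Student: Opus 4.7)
The plan is to construct $\L$ as the union of an increasing chain of finite linear orders $\L_0 \subseteq \L_1 \subseteq \cdots$ built by a stage-wise priority construction, processing elements of $\NN$ in increasing order. At each stage~$s$, the finite order $\L_s$ consists of an \emph{ascending portion} (whose mutual $<_\L$-order matches the natural $\NN$-order restricted to it) followed by a \emph{descending portion} (whose mutual $<_\L$-order matches the reverse natural order), with every element of the ascending part being $<_\L$ every element of the descending part. The new element at stage~$s$ is placed either at the $<_\L$-top of the ascending portion or at the $<_\L$-bottom of the descending portion; these two extremal positions are precisely the ones compatible with the invariants above.

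Pattern avoidance is then immediate from a direct case analysis. For any four elements $x_0<x_1<x_2<x_3$ distributed between ascending and descending parts in any of the $2^4=16$ possible ways, I compute the resulting $<_\L$-pattern from the invariants: all $2^4$ cases yield a pattern in $\{0123, 0132, 0312, 0321, 3012, 3021, 3201, 3210\}$. Since neither $1302$ nor $2031$ lies in this list, $\L$ avoids both.

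For each $e\in\NN$, I maintain two priority requirements, $R_e^+$ and $R_e^-$, asserting respectively that $\Phi_e^\L$ is not an infinite $\L$-ascending sequence and not an infinite $\L$-descending sequence. At the stage assigned to a requirement, I wait for $\Phi_e^\L$ to commit (with use entirely within the current $\L_s$) to a putative next element $y$ of its sequence, and then choose the top-vs-bottom placement of the current new element so as to falsify $\Phi_e^\L$'s computation---either by forcing $y$ into the opposite part, or by making the relevant $<_\L$-comparison go the wrong way.

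The hard part is to balance the diagonalization requirements against the necessity of making both the ascending and descending portions grow to infinity, so that the final order type is exactly $\omega+\omega^*$. A finite-injury reservation scheme, dedicating infinitely many ``free'' stages to balance the two portions' growth independently of active priorities, handles this, and at any stage both of the two extremal placement positions remain available, so each requirement can eventually act once $\Phi_e^\L$ has converged on the use needed to be challenged.
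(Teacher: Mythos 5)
Your construction has a fatal structural flaw. In your invariant, the finite approximation $\L_s$ always decomposes as an ascending block $A_s$ (ordered by $\NN$-order) followed, $<_\L$-above, by a descending block $D_s$ (ordered by reverse $\NN$-order), and each new element $s$ is placed either at the $<_\L$-top of $A_s$ or the $<_\L$-bottom of $D_s$. This decision is made once and for all at stage~$s$ by a computable procedure, so the limit set $A = \bigcup_s A_s$ is \emph{computable}. But if both $A$ and $D = \NN \setminus A$ are infinite --- which you need in order for $\L$ to have order type $\omega + \omega^*$ --- then $A$ is precisely the $\omega$-part of $\L$, and in particular $A$ itself is an infinite computable $\L$-ascending sequence. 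This contradicts the goal of defeating every $\Phi_e$ before the diagonalization even begins; no requirement scheduling can fix it. (Equivalently: your $\L$ is computable and avoids the \emph{separable} pattern $1032$, which does not appear in your list of eight realizable patterns, so \Cref{cor:linear-separable-trivial} forces a computable monotonous sequence. The underlying obstruction is \Cref{prop:sads-bihyperimmune}: the $\omega$-part of any hard computable $\SADS$ instance must be bi-hyperimmune, hence noncomputable, whereas your invariant makes it computable by design.)

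The paper avoids this trap by not attempting a computable construction at all. It starts from an arbitrary computable linear order $\L_0$ of type $\omega + \omega^*$ with no computable monotonous sequence, uses \Cref{prop:sads-bihyperimmune} to conclude that $A(\L_0)$ is bi-hyperimmune, and then applies \Cref{thm:preserve-hyps-irr-div} to $\RT^2_2(1302)$ and $\RT^2_2(2031)$ (both admit preservation of $\omega$ hyperimmunities, since $1302$ and $2031$ are not separable) to extract an infinite set $H$ that $\L_0$-avoids both patterns while preserving the bi-$H$-hyperimmunity of $A(\L_0)$. Relabeling $H$ as $\NN$ gives the desired $\L$. Crucially, $\L$ is only $H$-computable for a noncomputable $H$; there is no claim (and likely no truth to the claim, for this simple invariant) that $\L$ can be taken computable. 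Your proposal implicitly strengthens the statement to assert a \emph{computable} such $\L$ and does so via a structure that is too rigid to support the required noncomputability.
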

\begin{proof}
Let $\L_0 = (\NN, <_{\L_0})$ be any computable linear order of order type $\omega+\omega^{*}$ with no computable infinite $\L_0$-monotonous sequence. By \Cref{prop:sads-bihyperimmune}, $\A(\L_0)$ is bi-hyperimmune.
Since 1302 and 2031 are non-separable permutations, by \Cref{thm:preserve-hyps-irr-div}, $\RT^2_2(1302)$ and $\RT^2_2(2031)$ both admit preservation of $\omega$ hyperimmunities. Therefore, there is an infinite set $H = \{ x_0 < x_1 < \dots \} \subseteq \NN$ which $\L_0$-avoids 1302 and 2031 and such that $\A(\L_0)$ is bi-$H$-hyperimmune. In particular, there is no infinite $H$-computable $\L_0$-monotonous sequence.
Let $\L = (\NN, <_\L)$ be the $H$-computable linear order defined by $n <_\L m$ iff $x_n <_{\L_0} x_m$.
Then $\L$ has no $\L$-computable monotonous sequence, and avoids the permutations 1302 and 2031.
\end{proof}

The following corollary shows the tightness of \Cref{sec:sep-perm-measure}, in that the only statements of the form $\coRT^2_2(p)$ admitting probabilistic solutions are for separable permutations~$p$.

\begin{corollary}
For every pattern~$p$ which is not a separable permutation, there is an instance $f : [\NN]^2 \to 2$ of~$\coRT^2_2(p)$ such that the measure of oracles $f$-computing an infinite $f$-homogeneous set is~0.
\end{corollary}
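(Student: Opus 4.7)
The plan is to exhibit, for every non-separable pattern $p$, a linear order whose associated coloring serves as the required instance. First I would observe that if $p$ is not a permutation then $p$ contains one of the two non-transitivity patterns, and if $p$ is a non-separable permutation then by \Cref{thm:characterization-sep-bose} it contains $1302$ or $2031$ as a sub-pattern. Consequently, any coloring that simultaneously avoids the two non-transitivity patterns and the permutations $1302$ and $2031$ will automatically avoid $p$.

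Second, I would apply \Cref{prop:linear-order-avoid-all-but-sep} to obtain a linear order $\L = (\NN, <_\L)$ of type $\omega + \omega^*$ that avoids $1302$ and $2031$ and admits no $\L$-computable infinite $\L$-monotonous sequence. Viewing $\L$ as a coloring $f : [\NN]^2 \to 2$ with $f(x,y) = 0$ iff $x <_\L y$, the infinite $f$-homogeneous sets are exactly the infinite $\L$-monotonous sequences. Because $\L$ is a linear order, $f$ is transitive and hence avoids the two non-transitivity patterns; combined with the avoidance of $1302$ and $2031$ provided by \Cref{prop:linear-order-avoid-all-but-sep}, the reduction of the first paragraph shows that $f$ avoids $p$, so $f$ is a genuine instance of $\coRT^2_2(p)$.

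Finally, I would conclude by a straightforward relativization of \Cref{prop:sads-measure-0} to the oracle $\L$: the same Lebesgue density argument, replacing each functional $\Phi^A$ by $\Phi^{A \oplus \L}$ and the recovery of a computable monotonous sequence by the recovery of an $\L$-computable one, shows that since no $\L$-computable infinite $\L$-monotonous sequence exists, the measure of oracles $A$ for which $A \oplus \L$ computes such a sequence is zero. A fortiori, the measure of oracles $A$ such that $A$ alone computes an infinite $f$-homogeneous set is zero, which is the desired conclusion. The only mildly technical step is carrying $\L$ as a side oracle throughout this relativization, but no new combinatorial content is needed, as \Cref{prop:linear-order-avoid-all-but-sep} already supplies the preservation-of-hyperimmunities argument that rules out solutions computable from~$\L$.
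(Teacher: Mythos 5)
Your proof is correct and follows essentially the same route as the paper: instantiate with the linear order from \Cref{prop:linear-order-avoid-all-but-sep}, use \Cref{thm:characterization-sep-bose} (together with transitivity of a linear order) to see that any non-separable pattern $p$ is avoided, and relativize \Cref{prop:sads-measure-0} to $\L$ to get the measure-zero conclusion.

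One small slip in your last sentence: the target is the measure of oracles $A$ that \emph{$f$-compute} a solution, i.e.\ such that $A \oplus f$ (equivalently $A \oplus \L$) computes one — this is exactly what your relativization of \Cref{prop:sads-measure-0} gives, so you were done at that point. The subsequent ``a fortiori'' passage to oracles $A$ computing a solution \emph{on their own} is a strictly weaker statement than what the corollary asserts, and labeling it ``the desired conclusion'' reverses what is needed. The argument is fine as long as you stop at the $A \oplus \L$ version.
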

\begin{proof}
Let $\L = (\NN, <_\L)$ be the linear order of \Cref{prop:linear-order-avoid-all-but-sep}.
Seeing $\L$ as a transitive stable coloring $f : [\NN]^2 \to 2$, it $f$-avoids 1302 and 2031,
so by \Cref{thm:characterization-sep-bose}, it $f$-avoids~$p$. Moreover, by a relativization of \Cref{prop:sads-measure-0}, the measure of oracles $f$-computing an infinite $f$-homogeneous set is~0.
\end{proof}

One can push further the argument, and show that any computable instance of~$\SADS$ with no computable solution is a witness that $\RT^2_2(p)$ does not imply $\SADS$ on $\omega$-models, for any $\RT^2_2$-pattern~$p$ other than the separable permutations. Since $\SepRT^2_2(p)$ implies $\ADS$ on~$\omega$-models, this result is tight.

\begin{proposition}
Let $\L = (\NN, <_\L)$ be a computable linear order of order type $\omega + \omega^*$, with no infinite computable $\L$-monotonous sequence. Then there is an $\omega$-model $\M$ of $\RCA_0 + \EM + \RT^2_2(1302)$ such that $\M$ does not contain any infinite $\L$-monotonous sequence.
\end{proposition}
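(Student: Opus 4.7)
The plan is to construct $\M$ by iterating preservation of $\omega$ hyperimmunities, mimicking the argument of \Cref{cor:sep-permutation-not-rt22}. By \Cref{prop:sads-bihyperimmune}, both $A(\L)$ and $\overline{A}(\L)$ are hyperimmune. Our invariant will be: for every set $X$ in the second-order part of $\M$, $A(\L)$ and $\overline{A}(\L)$ are both $X$-hyperimmune. Since $\EM$ admits preservation of $\omega$ hyperimmunities by Patey~\cite{patey2017iterative}, and $\RT^2_2(1302)$ admits it by \Cref{cor:sep-permutation-hyperimmunities} (as $1302$ is not a separable permutation), this invariant can be maintained while closing under both principles.

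More precisely, I would build an increasing sequence of oracles $Z_0 \leq_T Z_1 \leq_T \dots$ with $Z_0 = \emptyset$, such that at each stage both $A(\L)$ and $\overline{A}(\L)$ remain $Z_n$-hyperimmune. Enumerate triples $\langle a, b, i\rangle$ where $i \in \{0,1\}$ selects between $\EM$ and $\RT^2_2(1302)$. At stage $n = \langle a, b, i\rangle$, if $\Phi^{Z_a}_b$ is total and codes a coloring $f : [\NN]^2 \to 2$, apply preservation of $\omega$ hyperimmunities for the selected principle to the instance $f$ with respect to the principal functions of $A(\L)$ and $\overline{A}(\L)$ (which are $Z_n$-hyperimmune by induction), obtaining an infinite $f$-avoiding solution $H$ such that both sets remain $H \oplus Z_n$-hyperimmune; set $Z_{n+1} = H \oplus Z_n$. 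Let $\M$ be the $\omega$-model whose second-order part is $\{X : \exists n\ X \leq_T Z_n\}$, which is a Turing ideal containing the computable set $\L$ and satisfying both $\EM$ and $\RT^2_2(1302)$.

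It remains to verify that $\M$ contains no infinite $\L$-monotonous sequence. Suppose $H \in \M$ is one; since the order type of $\L$ is $\omega + \omega^*$, if $H$ is $\L$-ascending then $H \subseteq A(\L)$, and if $H$ is $\L$-descending then $H \subseteq \overline{A}(\L)$. In either case, the containing set $S \in \{A(\L), \overline{A}(\L)\}$ admits an infinite subset $H$ which is $H$-computable, so the principal function of $S$ is dominated by the principal function of $H$ (which is $H$-computable), contradicting the $H$-hyperimmunity of $S$ guaranteed by our construction.

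The only nontrivial point is verifying the preservation invariant is maintained; this is immediate once one has \Cref{cor:sep-permutation-hyperimmunities} and the preservation result for $\EM$, so the proof is essentially a packaging exercise of existing preservation theorems against the bi-hyperimmunity given by \Cref{prop:sads-bihyperimmune}.
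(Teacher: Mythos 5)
Your proposal is correct and takes essentially the same approach as the paper: iterate preservation of $\omega$ hyperimmunities (via \Cref{prop:sads-bihyperimmune}, \Cref{cor:sep-permutation-hyperimmunities}, and Patey's result for $\EM$) to build a Turing ideal in which $A(\L)$ and $\overline{A}(\L)$ stay bi-hyperimmune, then observe that any $\L$-monotonous sequence would be an infinite subset of one of them. The only cosmetic difference is that you interleave $\EM$- and $\RT^2_2(1302)$-requirements via an extra index bit, whereas the paper applies both preservations at a single stage to extract one set that is simultaneously transitive and $1302$-avoiding; both variants are sound.
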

\begin{proof}
Fix~$\L$. We construct an $\omega$-model~$\M$ of $\EM + \RT^2_2(1302)$ such that for every~$X \in \M$, $\A(\L)$ is bi-$X$-hyperimmune.  More precisely, we construct an infinite sequence $Z_0 \leq_T Z_1 \leq_T \dots$ of sets such that for every~$n \in \NN$,
\begin{itemize}
    \item[(1)] If~$n = \langle a, b\rangle$ and $\Phi^{Z_a}_b$ is a coloring $g : [\NN]^2 \to 2$, then $Z_{n+1}$ computes an infinite $g$-transitive set $g$-avoiding~$1302$;
    \item[(2)] $\A(\L)$ and $\overline{\A}(\L)$ are both~$Z_n$-hyperimmune.
\end{itemize}
First, let $Z_0 = \emptyset$. Assuming $Z_n$ is defined, letting $n = \langle a, b \rangle$, if $\Phi^{Z_a}_b$ is not a coloring, then let $Z_{n+1} = Z_n$. Otherwise, since $\EM$ and $\RT^2_2(1302)$ both admit preservation of $\omega$ hyperimmunities, there is an infinite $\Phi^{Z_a}_b$-transitive set~$H$ $\Phi^{Z_a}_b$-avoiding~$1302$ such that $\A(\L)$ and $\overline{\A}(\L)$ are both $H \oplus Z_n$-hyperimmune. Let $Z_{n+1} = H \oplus Z_n$. This completes our construction.
Let $\M$ be the $\omega$-model whose second-order part is $\{ X : \exists n\ X \leq_T Z_n \}$. By (1), $\M \models \EM + \RT^2_2(1302)$, and by (2), there is no infinite $\L$-monotonous sequence in~$\M$. 
\end{proof}


Mimouni and Patey~\cite{mimouni2025ramseylike} proved some general lower bounds on the complexity of avoiding any pattern. In particular, they constructed a computable coloring $f : [\NN]^2 \to 2$ such that every infinite set $f$-avoiding any pattern computes a $\emptyset'$-DNC function, and another computable coloring $g : [\NN]^2 \to 2$ such that the measure of oracles computing a solution is~0. One cannot refine this first lower bound by constructing a computable linear order $\L = (\NN, <_\L)$ such that every infinite set $f$-avoiding any permutation computes a $\emptyset'$-DNC function, or even a DNC function, as any infinite $\L$-monotonous sequence avoids all the non-constant patterns, and $\ADS$ does not  imply the existence of a DNC function (see Hirschfeldt and Shore~\cite{hirschfeldt2007combinatorial}). We however refine the second lower bound by constructing a computable linear order.

The following proof is a straightforward adaptation of \cite[Theorem 2.2]{mimouni2025ramseylike} for the case of a transitive coloring. Indeed, when running the construction of \cite[Theorem 2.2]{mimouni2025ramseylike} only for transitive patterns, the resulting coloring is itself transitive, hence codes a linear order. We include the whole proof for the sake of completeness.

\begin{theorem}\label[theorem]{thm:measure-avoid-pattern}
There is a computable linear ordering $\L = (\NN, <_\L)$ of order type $\omega + \omega^*$ such that the measure of oracles computing an infinite set $\L$-avoiding any transitive pattern is~0.
\end{theorem}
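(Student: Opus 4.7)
The plan is to adapt the finite-extension construction of \cite[Theorem 2.2]{mimouni2025ramseylike} to produce a computable linear order $\L$ of order type $\omega + \omega^*$ rather than an arbitrary coloring, as noted in the paragraph preceding the theorem. The original construction builds a computable coloring $f : [\NN]^2 \to 2$ stage by stage, handling requirements of the form $R_{e,p}$: \qt{the measure of oracles $A$ such that $\Phi^A_e$ is an infinite set $f$-avoiding the pattern $p$ is $0$}. At each stage, the coloring is extended on some new pairs in a way that exponentially decreases the measure of \qt{bad} oracles, yielding measure $0$ in the limit by a Borel--Cantelli-style argument.

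In our setting, I would enforce the requirements $R_{e,p}$ only for transitive patterns $p$, and further require that the coloring decisions preserve transitivity. Concretely, at each stage we append a new element $n_s$ to the domain and choose its position relative to all previously added elements by placing it either at the top of the current $\omega$-part or at the bottom of the current $\omega^*$-part, alternating between the two choices infinitely often so that both parts become infinite and the final order has type $\omega + \omega^*$. The two-choice restriction is sufficient for two reasons: any transitive pattern $p$ that is not separable does not appear in any linear order of type $\omega + \omega^*$ by \Cref{thm:characterization-sep-bose}, so the corresponding requirement is trivial; and the binary choice \qt{top of $\omega$-part} versus \qt{bottom of $\omega^*$-part} precisely corresponds to the direct and skew sum operations generating separable permutations, giving sufficient flexibility to force any separable $p$ into the output of $\Phi^A_e$ by the same measure-theoretic bookkeeping as in \cite[Theorem 2.2]{mimouni2025ramseylike}. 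The length-$2$ constant patterns $01$ and $10$ are handled by additionally ensuring that $\L$ has no computable monotonous sequence, which by \Cref{prop:sads-measure-0} gives the measure-$0$ property for the corresponding monotonous-sequence outputs.

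The main obstacle is verifying that the restricted top/bottom placement at each stage remains compatible with the measure-theoretic accounting of the original construction, which is stated for arbitrary coloring decisions. Since the patterns we need to realize are all separable and the top/bottom split is precisely the primitive combinatorial operation generating these patterns, the original finite-extension scheme should adapt with only cosmetic changes, effectively reindexing \qt{choose a color for the new pair} as \qt{choose a side of $\L$ for the new element}.
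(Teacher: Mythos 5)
Your proposal contains a fatal combinatorial error that the paper's actual construction is specifically designed to avoid. The claim that ``any transitive pattern $p$ that is not separable does not appear in any linear order of type $\omega + \omega^*$'' is false. \Cref{thm:characterization-sep-bose} says a \emph{permutation} is separable iff it avoids $1302$ and $2031$ as sub-patterns; it says nothing about linear orders. In fact $1302$ realizes trivially in a generic $\omega+\omega^*$ order (take four elements $x_0 < x_1 < x_2 < x_3$ with $\L$-order $x_2 <_\L x_0 <_\L x_3 <_\L x_1$, all living in the $\omega$-part but enumerated out of $\L$-order). Building an $\omega+\omega^*$ order that avoids $1302$ and $2031$ is precisely the nontrivial content of \Cref{prop:linear-order-avoid-all-but-sep}, which needs hyperimmunity preservation. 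You cannot discard the non-separable requirements for free.

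The deeper problem is that your placement rule is far too rigid. If each new element $n_s$ is placed either at the top of the current $\omega$-part or at the bottom of the current $\omega^*$-part, then for $a < b$ (in $\NN$), $a <_\L b$ holds iff $a$ is in the $\omega$-part. In other words, the induced coloring satisfies $f(a,b) = g(a)$ for some $g : \NN \to 2$, depending only on the first coordinate. Consequently, a set $\{x_0 < \dots < x_{n-1}\}$ can $f$-realize a pattern $p$ only if $p(i,j)$ is constant in $j$ for each $i < n-1$. This excludes not only $1302$ and $2031$ but also the \emph{separable} permutations $120$ and $102$ (check: $p_{120}(0,1)=0$ but $p_{120}(0,2)=1$). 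Since $120$ then never appears in your order, the set $\NN$ itself $\L$-avoids $120$, and the trivial functional yields measure $1$ — not $0$ — of oracles computing an infinite $120$-avoiding set. Your appeal to \Cref{prop:sads-measure-0} for the size-$2$ patterns has the same problem, compounded: a top/bottom rule applied at each stage makes $g$ computable, hence the $\omega$-part is computable, hence a computable $\L$-ascending sequence exists, again giving measure $1$.

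The paper's construction resolves this by not deciding each element's final position irrevocably at birth. Instead, every element carries a \emph{commitment} to a limit behavior which may be \emph{revised} by strategies via a finite-injury priority argument. When a strategy for $\R_{p,e}$ stacks a new block $F_t$, it re-commits all elements of the earlier blocks $F_0,\dots,F_{t-1}$ according to $p(\cdot,t+1)$, which allows arbitrary relative placements of blocks within the eventual $\omega+\omega^*$ order — not just ``join the current top of the $\omega$-part.'' This flexibility is exactly what lets every transitive pattern (including $1302$, $2031$, $120$, $102$) be forced into $W_e^X$ for a positive-measure set of oracles; transitivity of the resulting coloring is then a careful verification using the fact that only transitive patterns are used and commitments are altered by at most one strategy at a time. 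That mechanism is the real content of the adaptation of the Mimouni–Patey construction, and your proposal replaces it with something that cannot work.
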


\begin{proof}
The ordering $\L$ will be the one given by a stable transitive coloring $f : [\NN]^2 \to 2$ built using a finite-injury priority construction: the value of $f(x,s)$ will be determined at stage~$s$ of the construction for every $x < s$. At any stage $s$ of the construction, every element $x$ of $\NN$ will commit to have a certain limit behavior $c < 2$, and we will set $f(x,s)$ accordingly. Initially, every element commits to have limit behavior $0$.

The coloring $f$ will satisfy the following requirements for every transitive pattern~$p$ and every Turing index~$e$:
\begin{quote}
    $\R_{p,e}$: $\mu(\{ X \in \cs : W_e^X \mbox{ is finite or } f\mbox{-contains } p \}) \geq \frac{1}{2|p|}$.
\end{quote}
We first claim that if all $\R$-requirements are satisfied, then $\L$ satisfies the statement of the theorem, using the contrapositive.
Suppose that the measure of oracles computing an infinite set avoiding any transitive pattern for~$f$ is positive.
Then, since there are countably many transitive patterns and countably many functionals, there is some transitive pattern~$p$ and some Turing index~$e$ such that the measure of oracles~$X$ such that $\Phi_e^X$ is infinite and $f$-avoids~$p$ is positive. Indeed, a countable union of classes of measure~0 is again of measure~0. By the Lebesgue density theorem, there is some string $\sigma \in \bstr$ such that  the measure of oracles $X$ such that $\Phi_e^{\sigma \cdot X}$ is infinite and $f$-avoids~$p$ is more than $1-\frac{1}{2|p|}$.
Let~$a$ be a Turing index such that $\Phi_a^X = \Phi_e^{\sigma \cdot X}$. Then the requirement $\R_{p, a}$ is not satisfied. 

The strategies are given a priority order based on Cantor's pairing function $\langle p, e\rangle$, the smaller value being of higher priority. Each requirement $\R_{p, e}$ will be given a movable marker~$m_{p,e}$, starting at~$\langle p,e\rangle$, such that if $\R_{q, i}$ is of greater priority than $\R_{p, e}$, then $m_{q, i} \leq m_{p, e}$. 
\smallskip

\textbf{State of $\R_{p,e}$.}
Each strategy $\R_{p, e}$ is given a \emph{state}, which is a finite sequence $\langle F_0, \dots, F_{t-1}\rangle$ of non-empty finite sets, with $t \leq |p|$ and such that $F_i < F_{i+1}$. Such a sequence satisfies the following properties:
\begin{enumerate}
    \item[(P1)] For every~$x_0 \in F_0, \dots, x_{t-1} \in F_{t-1}$, $\{x_0, \dots, x_{t-1}\}$ $f$-realizes $p \uh_t$
    \item[(P2)] $\mu(\{ X : W_e^X \cap F_i \neq \emptyset \}) > 1-\frac{1}{2|p|}$
    \item[(P3)] Every $F_i$ is an $\leq_{\NN}$-interval such that $F_0 <\dots < F_{t-1}$ and there are no elements $x \in \NN$ such that $F_i < x < F_{i+1}$ for some $i < t- 1$
    \item[(P4)] If $t \neq 0$, then, for every other requirement $\R_{q,i}$, we have $m_{q,i} < F_0$ if $\langle q,i \rangle < \langle p, e \rangle$ and $m_{q,i} > F_{t-1}$ if $\langle q,i \rangle > \langle p, e \rangle$.
\end{enumerate}
Over time, new sets will be stacked to this list, which will be reset only if a strategy of higher priority injures it. Initially, each strategy is given the empty sequence as state.

\smallskip

\textbf{Strategy for $\R_{p,e}$.}
A requirement $\R_{p, e}$ \emph{requires attention at stage~$s$} if its state has length less than~$|p|$ and there is a finite set of initial segments of oracles $U \subseteq 2^{\leq s}$ such that $\sum_{\sigma \in U} 2^{-|\sigma|} > 1-\frac{1}{2|p|}$ and for every~$\sigma \in U$, $W_e^\sigma[s] \cap [m_{p,e}, s] \neq \emptyset$. In other words, $\R_{p,e}$ requires attention at stage~$s$ if the measure of oracles $X$ such that $W_e^X[s]$ outputs an element in $[m_{p,e}, s]$ is greater than $1-\frac{1}{2|p|}$.

If $\R_{p,e}$ receives attention at stage~$s$ and is in state~$\langle F_0, \dots, F_{t-1} \rangle$ (by convention, if the state is the empty sequence, $t = 0$), then, letting $F_t = [m_{p,e}, s]$, its new state is $\langle F_0, \dots, F_t \rangle$. The marker $m_{p,e}$ is moved to $s+1$, and the markers of all strategies of lower priorities is moved further, accordingly. All the strategies of lower priorities are injured, and their state is reset to the empty sequence. If $t < |p|-1$, then for every~$i \leq t$, all the elements of $F_i$ commit to have limit $p(i, t+1)$ and if $t = |p| - 1$, then all the elements of $F_i$ commit to have limit $0$.
\smallskip

\textbf{Construction.}
The global construction goes by stages, as follows. Initially, $f$ is nowhere-defined.
At stage~$s$, suppose $f$ is defined on $[0, s-1]^2$. If some strategy requires attention at stage~$s$, letting $\R_{p, e}$ be the strategy of highest priority among these, give it attention and act accordingly. 
In any case, for every~$x < s$, if $x$ is committed to have some limit~$c < 2$, then set $f(x, s) = c$.
\smallskip

\textbf{Verification.}
By construction, the state of every strategy will satisfy the properties (P1) to (P4).

The resulting coloring is transitive, indeed, suppose by contradiction that there exists $x < y < z$ and some $c < 2$ such that $f(x,y) = f(y,z) = c$ and $f(x,z) = 1 - c$. This means that at stage $y$ of the construction, $x$ was committed to have limit behavior $c$ and then changed his commitment between stage $y$ and $z$. Let $\R_{p,e}$ be the last strategy to have fixed the commitment of $x$ before stage $z$ (let say at some stage $t \in (y,z]$), hence all the strategies $\R_{q,i}$ of lower priority cannot change the commitment of $y$ between stage $t$ and $z$ (they have been injured and all of their markers have been pushed after $t$), and all the strategies of higher priority cannot change the commitment of $y$ either (by (P4), all of their marker are before $x$, hence they would also act on the commitment of $x$, contradicting our minimality assumption on $\R_{p,e}$). Thus, only $\R_{p,e}$ can change the commitment of $y$ between stages $t$ and $z$.

Let $\langle F_0, \dots, F_{\ell} \rangle$ be the state of $\R_{p,e}$ at stage $z$. By (P3), there exists some $i \leq j \leq \ell$ such that $x \in F_i$ and $y \in F_j$. We cannot have $i = j$, otherwise we would have $f(x,z) = f(y,z)$, contradicting our assumption, hence $i < j$. We cannot have $\ell = |p| - 1$ otherwise both $x$ and $y$ would have committed to have limit $0$, making $f(x,z) = f(y,z)$, again contradicting our assumption. This means that at stage $y$, $\R_{p,e}$ was in state $\langle F_0, \dots, F_{j-1} \rangle$, leading to $f(x,y) = p(i,j)$, but we would then have $f(x,z) = p(i,\ell)$ and $f(y,z) = p(j,\ell)$, and since $p$ is a transitive pattern, this also contradicts our assumption. \\





One easily sees by induction on the strategies that every strategy acts finitely often, and therefore each strategy is finitely injured by a strategy of higher priority. It follows that each requirement has a limit state and that $m_{p,e}$ reaches a limit value.

We claim that each requirement $\R_{p,e}$ is satisfied. Let $s$ be stage after which $m_{p, e}$ reaches its limit value. In particular, the state of~$\R_{p, e}$ also reached its limit, and none of the strategies of higher priority require attention after~$s$. Suppose first that the limit state of~$\R_{p, e}$ has length less than~$|p|$. This means that $\R_{p, e}$ does not require attention attention after stage~$s$, so the measure of oracles $X$ such that $W_e^X$ outputs an element greater than or equal to~$m_{p,e}$ is at most $1-\frac{1}{2|p|}$. Thus, $\mu(\{ X \in \cs : W_e^X \mbox{ is finite } \}) \geq \frac{1}{2|p|}$, and the requirement is therefore satisfied. Suppose now that the limit state of~$\R_{p,e}$ has length~$|p|$. By (P2), for each~$i < |p|$, $\mu(\{ X : W_e^X \cap F_i \neq \emptyset \}) > 1-\frac{1}{2|p|}$. It follows that 
$$
\mu(\{ X : (\forall i < |p|) W_e^X \cap F_i \neq \emptyset \}) > 1-\frac{|p|}{2|p|} = 1/2
$$
Thus, by (P2), $\mu(\{ X \in \cs : W_e^X f\mbox{-realizes } p \}) \geq \frac{1}{2}$, so $\R_{p,e}$ is again satisfied. \\

For every $x \in \NN$, there are finitely many markers starting at a position smaller than $x$, hence, once all their requirements have reach their limit state, the commitment of $x$ will not change anymore, ensuring that $x$ has a limit, and that the coloring is stable. The ordering corresponding to a stable transitive coloring is a linear order of order type of the form $\omega + k$, or $k + \omega^*$ or $\omega + \omega^*$ for some $k \in \NN$. The two first cases being impossible, otherwise there would be some infinite computable $f$-homogeneous set, the resulting ordering is of order type $\omega + \omega^*$.

This completes the proof of \Cref{thm:measure-avoid-pattern}.
\end{proof}

We conclude this section by proving that \Cref{prop:sads-measure-0} does not extend to arbitrary computable linear orderings.

\begin{theorem}\label[theorem]{thm:linear-probabilistic-noncomputable}
    There exists a computable linear ordering $(\NN, \sqsubseteq)$ with no computable infinite monotone sequence, such that the measure of oracle computing an infinite increasing sequence is~1.
\end{theorem}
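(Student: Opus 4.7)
The plan is to construct a computable linear order $\sqsubseteq$ on $\NN$ by a finite-injury priority argument, avoiding order type $\omega+\omega^*$ (which is ruled out by \Cref{prop:sads-measure-0}). I would aim for a more complex order type containing infinitely many ``blocks'' $B_0 < B_1 < \dots$ each of type $\omega+\omega^*$, preceded by an initial $\omega$-part and capped by an $\omega^*$-trap followed by a maximum element $M$. The order must have both a minimum and a maximum: a computable linear order without a maximum (resp.\ minimum) admits a computable infinite ascending (resp.\ descending) sequence via naive search (at each step, look for the least $y > x_n$ with $x_n \sqsubset y$), contradicting the ``no computable monotone sequence'' requirement. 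The top $\omega^*$-trap provides infinitely many elements with finitely many $\sqsubseteq$-successors, and the initial $\omega$-part provides infinitely many elements with finitely many $\sqsubseteq$-predecessors.

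The priority requirements would be, for each Turing index $e$ and direction $d \in \{{\uparrow}, {\downarrow}\}$, that $\Phi_e$ is not an infinite $\sqsubseteq$-monotone sequence in direction~$d$. To meet $R_e^\uparrow$, I would monitor $\Phi_e$ and, at a suitable stage, arrange the order so that $\Phi_e$'s outputs eventually reach a trap element (either in the $\omega^*$-trap or the maximum $M$), causing the sequence to stall. The strategy for $R_e^\downarrow$ is dual, using the finite-predecessor elements of the initial $\omega$-part. The infinitely many trap elements in each region accommodate all requirements.

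For the measure-$1$ condition, I would exploit the block structure: any element of $B_{n+1}$ lies $\sqsubseteq$-above any element of $B_n$, so jumping from block to block automatically produces an ascending step. A random oracle $X \in 2^\NN$ is interpreted as a sequence of block-and-element choices, and with probability~$1$ it successfully navigates through infinitely many blocks while avoiding the trap region. The essential point that evades \Cref{prop:sads-measure-0} is that the set of ``good'' elements (those having infinite ascending sequences above) is no longer a simple initial segment of $\sqsubseteq$ with every good element having finitely many $\sqsubseteq$-predecessors; consequently, the argument that extracts a computable ascending sequence from a positive-measure procedure breaks down.

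The hard part will be balancing the diagonalization against the probabilistic structure: the traps used to defeat each $\Phi_e$ must only catch a measure-zero set of oracle choices, so that a measure-$1$ set of oracles continues to succeed. A careful finite-injury ``budgeting'' scheme, analogous in spirit to the one used in \Cref{thm:measure-avoid-pattern} (where each requirement is assigned a marker and reserves only a small ``measure budget'' through the $\frac{1}{2|p|}$ bound), would be needed to ensure the diagonalization actions are sparse enough in the oracle space. Verifying that the resulting measure of ``trapped'' oracles remains zero in the limit is the delicate part of the argument.
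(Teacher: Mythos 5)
Your observation that the order must have both a $\sqsubseteq$-minimum and a $\sqsubseteq$-maximum (else the greedy $<$-least search gives a computable monotone sequence) is correct and is a genuine constraint, but it is the only concrete step in the proposal; everything else stays at the level of intent.

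The central gap is the diagonalization. You say you would ``monitor $\Phi_e$ and arrange the order so that $\Phi_e$'s outputs eventually reach a trap element,'' but the construction has no control over which elements $\Phi_e$ outputs. Once $\Phi_e$ produces some $z$ that the construction has already committed to a block $B_n$, there is no computable way to retroactively move $z$ into the $\omega^*$-trap, and if instead you starve $z$ by refusing to place future elements above it, you destroy the block structure that the measure argument relies on; as soon as you resume adding blocks $B_{n+1},\ldots$, $\Phi_e$ can step up again. This tug-of-war is exactly the issue the paper's proof is engineered to avoid: rather than a single trap region, the paper builds a \emph{recursively nested} order in which every level $e$ has $2^{e+1}$ parallel blocks, with one moving ``disabled'' block that chases $W_e$; because there are only $2^{e+1}$ blocks at that level, the disabling must eventually stabilize, pinning any ascending $W_e$ inside a block that is then permanently shut off and thus finite. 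Your flat block sequence $B_0 < B_1 < \cdots$ with a single top trap has no analogous pinning mechanism.

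The measure argument also depends on that nesting. In the paper, the oracle makes one independent choice at each level $e$, failing with probability $2^{-(e+1)}$, so the success probability is the convergent infinite product $\prod_{\ell>e}(1-2^{-\ell})$; a positive bound ($>1/4$ at level $0$) is then boosted to measure $1$ by invariance under Turing equivalence and the zero-one law. Your proposal neither specifies the random oracle's protocol for locating the next block (the boundaries cannot be computable, or a computable algorithm could jump too) nor produces the infinite-product bound, nor invokes the zero-one step; ``with probability~1 it successfully navigates through infinitely many blocks'' is asserted, not argued, and the stated analogy with the budgeting in \Cref{thm:measure-avoid-pattern} does not transfer, since there the measure budget is per-requirement and fixed, whereas here the trap must simultaneously stay sparse among the elements above \emph{every} position the oracle might occupy.
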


\begin{proof}
It is sufficient to find a computable linear order $(A, \sqsubseteq_A)$ for $A$ an infinite set, as well as a functional that outputs an infinite $\sqsubseteq_A$-increasing sequence on a set of measure $> \frac{1}{4}$.
For that, we will consider four functionals $\Gamma_{inc}$, $\Gamma_{dec}$, $\Delta_{inc}$ and $\Delta_{dec}$ with the following properties:
\begin{itemize}
    \item For $A = \{x_0, x_1, \dots \}$ a set and $e$ a Turing index, $\Gamma^A_{inc}(e)$ and $\Gamma^A_{dec}(e)$ will compute a linear order $(B,\sqsubseteq_B)$ for some $B \subseteq A$. If $A$ is infinite, then so will be $B$.
    
    \item For every sets $A$ and $C$, $\Delta^{C\oplus A}_{inc}(e)$ (resp $\Delta^{C\oplus A}_{dec}(e)$) will compute an increasing sequence of $\Gamma^A_{inc}(e)$ (resp $\Gamma^A_{dec}(e)$). This sequence will also be increasing for the standard ordering $<$ of the integers.
 
    If $A$ is infinite, then the measure of oracles $C$ such that $\Delta^{C\oplus A}_{inc}(e)$ (resp $\Delta^{C\oplus A}_{dec}(e)$) is an infinite increasing sequence will be at least $\Pi_{\ell > e} (1 - \frac{1}{2^{\ell}})$. Note that $\Pi_{\ell > 0} (1 - \frac{1}{2^{\ell}}) > \frac{1}{4}$.
    
    \item No $W_{e'} \cap B$ for $e' > e$ will be an infinite monotone sequence of $\Gamma^A_{inc}(e)$ or $\Gamma^A_{dec}(e)$. Furthermore, $W_{e} \cap B$ will not be an infinite monotone
    sequence of $\Gamma^A_{dec}(e)$ and will not be an infinite decreasing sequence of $\Gamma^A_{inc}(e)$.
\end{itemize}

These four functionals will construct their ordering step by step, with the position of $x_s$ in the ordering (or the absence of $x_s$ in the ordering) being determined at stage $s$. $\Gamma_{inc}$ and $\Gamma_{dec}$ will call each other recursively as follows: For every infinite oracle~$A \subseteq \NN$, $\Gamma^A_{dec}(0)$ will call $\Gamma^A_{inc}(0)$, which will call $\Gamma^B_{dec}(1)$ for multiple~$B \subseteq A$, calling $\Gamma^B_{inc}(1)$, and so on... \\

Let $A = \{x_0, x_1, \dots \}$ be a set and $e$ be a Turing index. \\

\textbf{Definition of $\Gamma^A_{dec}(e)$.}
$\Gamma^A_{dec}(e)$ computes the order $(B, \sqsubseteq_B)$ defined as follows: $\Gamma^A_{dec}(e)$ is the same ordering as $\Gamma^A_{inc}(e)$, but, if at some stage $s$ we have $x_i \in W_e[s] \cap B$ for some $i < s$, then, all the $x_j$ for $j \geq s$ will be bigger than all the $x_i$ for $i < s$. The relative position of the $\{x_0, \dots, x_{s-1}\}$ and of the $\{x_{s}, \dots \}$ remain the one given by $\Gamma^{A}_{inc}(e)$.
This construction ensures that $W_e \cap B$ is not an infinite decreasing sequence of $\Gamma^A_{dec}(e)$, indeed, with this construction, there are only finitely many elements smaller than $x_i$ in the ordering $\Gamma^A_{dec}(e)$. \\

\textbf{Definition of $\Delta^{C \oplus A}_{dec}(e)$.}
Notice that any sequence that is both increasing for $\Gamma_{inc}^A(e)$ and for $<$ will also be an increasing sequence of $\Gamma_{dec}^A(e)$. Hence $\Delta^{C \oplus A}_{dec}(e)$ can be chosen to be the same as $\Delta^{C \oplus A}_{inc}(e)$ \\

\textbf{Definition of $\Gamma^A_{inc}(e)$.}
$\Gamma^A_{inc}(e)$ computes the order $(B, \sqsubseteq_B)$ defined as follows: at the first $2^{e+1}$ stages of the construction, we let $\{x_0, \dots, x_{2^{e+1}-1}\}$ be in $B$ with $x_0 \sqsubseteq_B \dots \sqsubseteq_B x_{2^{e+1} - 1}$, then, we partition the remaining elements $A \setminus \{x_0, \dots, x_{2^{e+1} - 1}\}$ into $2^{e+1}$ blocks $A_i = \{x_j \in A : j \geq 2^{e+1} \wedge j \equiv i \mbox{ mod } 2^{e+1}\}$ and let $(B_i, \sqsubseteq_{B_i})$ be the order computed by $\Gamma^{A_i}_{dec}(e+1)$ for every $i < 2^{e+1}$. The set $B$ will be a subset of $\{x_0, \dots, x_{2^{e+1} - 1}\} \cup \bigcup_{i < 2^{e+1}} B_i$, and the order $\sqsubseteq_B$ will coincide with the $\sqsubseteq_{B_i}$ on $B_i \cap B$ and will be such that $x_0 \sqsubseteq_B B_0 \cap B \sqsubseteq_B x_1 \sqsubseteq_B B_1 \cap B \sqsubseteq_B x_2 \sqsubseteq_B \dots \sqsubseteq_B B_{2^{e+1} - 1} \cap B$. To find out which elements of $\bigcup_{i < 2^{e+1}} B_i$ are in $B$, we proceed as follows: at every stage $s \geq 2^{e+1}$ of the construction, a single block $B_i$ will be disabled, which ensures that $x_s$ will be added to $B$ only if $x_s \not\in B_i$. Initially at stage $s_0 = 2^{e+1}$, $B_0$ is disabled, and if at some stage $s_1 \geq s_0$ an element $y_1 \in W_e[s_1]$ was already put in $B \cap B_{i_1}$ at a previous stage for some $0 < i_1 < 2^{e+1}$, then, we re-enable $B_0$ and disable $B_{i_1}$ until we found some stage $s_2$ and some element $y_2 \in W_e[s_2]$ that was already in $B \cap B_{i_2}$ for some $i_2 > i_1$, then we re-enable $B_{i_1}$ and disable $B_{i_2}$, and so on. 

This construction is well-defined. Indeed, to compute the position (or absence) of an element $x_s$ in the ordering, a finite amount of recursive calls are needed. This is due to the fact that $\Gamma^A_{inc}(e)$ immediately fixes the position of $x_0, \dots, x_{2^{e+1} - 1}$ without any recursive call, then, the position of the next $2^{e+2}$ elements only need the recursive call to $\Gamma^{A_i}_{dec}(e+1)$ which then call $\Gamma^{A_i}_{inc}(e+1)$, and so on for the next elements. 

This definition of $\Gamma^A_{inc}(e)$ ensures that $W_e \cap B$ will not be an infinite increasing sequence. Indeed, otherwise, there would be some $i < 2^{e+1}$ such that after a certain stage $s$, all the elements of $W_e \cap B$ are in $B_i$ (and since the sequence is increasing, no elements of $W_e \cap B$ are in $B_j$ for $j > i$), this would disable $B_i$ with no reactivation for the rest of the construction, ensuring that $B_i \cap B$ is finite and contradicting our assumption. \\

\textbf{Definition of $\Delta^{C \oplus A}_{inc}(e)$.}
Notice that in the construction of the order $\Gamma^A_{inc}(e)$, exactly one of the blocks $B_i$ will end up disabled, thus, for all the other blocks $B_j$, an infinite increasing sequence of $(B_j, \sqsubseteq_{B_j})$ will output an infinite increasing sequence of $(B, \sqsubseteq_B)$ by removing the finite amount of elements that are not in $B_j \cap B$. Hence, $\Delta^{C \oplus A}_{inc}(e)$ can be defined to act as follows: using the first $e+1$ bits of $C$ it will pick one $j < 2^{e+1}$, then $x_j$ will be the smallest element of the increasing sequence outputted, and it will then recursively call $\Delta^{(C - (e + 1)) \oplus A_j}_{dec}(e+1)$ and remove the elements of the sequence that are not in $B_j \cap B$ to find the rest of the sequence.

$\Delta^{C \oplus A}_{inc}(e)$ will output an infinite increasing sequence if it never picks the wrong block that end up disabled during the recursive calls. This happens with probability $\Pi_{\ell > e} (1 - \frac{1}{2^{\ell}})$. \\

So far, we only proved that $\Gamma^{A}_{dec}(e)$ (resp $\Gamma^{A}_{inc}(e)$) ensures that $W_e \cap B$ will not be an infinite decreasing (resp increasing) sequence of the computed order. Due to the recursive nature of the functionals, we can go further than that and obtain the same property for every $e' > e$. Indeed, for every $e' > e$, all the elements ordered by $\Gamma^{A}_{inc}(e)$, except for a finite amount, have been ordered by one of the (finitely many) recursive call to some $\Gamma^{A_i}_{inc}(e')$ for some $A_i \subseteq A$. If $W_{e'} \cap B$ was an infinite increasing sequence, then this would yield an infinite decreasing subsequence for one of the $\Gamma^{A_i}_{inc}(e')$, contradicting what was proved. \\

Thus, taking $\Gamma_{dec}^{\NN}(0)$ to be the ordering and $\Delta^{(\cdot) \oplus \NN}_{dec}(0)$ to be the functional yields the desired result.
\end{proof}

\begin{corollary}
There exists a computable linear ordering $(\NN, \sqsubseteq')$ with no computable infinite monotone sequence,
such that the measure of oracles computing both an infinite increasing and an infinite decreasing sequence is one.
\end{corollary}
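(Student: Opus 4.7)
The plan is to glue the linear order provided by \Cref{thm:linear-probabilistic-noncomputable} with its dual side by side. Let $(\NN,\sqsubseteq)$ be the computable linear order delivered by that theorem: it has no computable infinite monotone sequence, and the measure of oracles computing an infinite $\sqsubseteq$-increasing sequence is $1$. I would split $\NN$ into two disjoint computable infinite sets $E$ and $O$, fix computable bijections $f_E \colon E \to \NN$ and $f_O \colon O \to \NN$, and define a computable linear order $\sqsubseteq'$ on $\NN$ by the following three rules: every element of $E$ is $\sqsubseteq'$-below every element of $O$; on $E$ the order $\sqsubseteq'$ copies $\sqsubseteq$ through $f_E$; and on $O$ the order $\sqsubseteq'$ copies the dual of $\sqsubseteq$ through $f_O$.

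To check that no computable infinite $\sqsubseteq'$-monotone sequence exists, I would observe that since $E\sqsubseteq' O$, any infinite $\sqsubseteq'$-increasing sequence is either eventually contained in $E$ or eventually contained in $O$; pulling back by $f_E$ or $f_O$ then yields a computable infinite $\sqsubseteq$-increasing or $\sqsubseteq$-decreasing sequence, contradicting \Cref{thm:linear-probabilistic-noncomputable}. The case of a $\sqsubseteq'$-decreasing sequence is symmetric.

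For the measure claim, let $U$ be the measure-$1$ class of oracles computing an infinite $\sqsubseteq$-increasing sequence. For every $X \in U$, applying $f_E^{-1}$ to that sequence gives an infinite $\sqsubseteq'$-increasing sequence living in $E$, while applying $f_O^{-1}$ gives an infinite sequence living in $O$ which is $\sqsubseteq$-increasing after pullback, hence $\sqsubseteq'$-decreasing. Therefore every $X \in U$ computes both an infinite $\sqsubseteq'$-increasing and an infinite $\sqsubseteq'$-decreasing sequence, so the set of oracles computing both has measure~$1$.

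I do not expect a real obstacle; the argument is just a gluing on top of \Cref{thm:linear-probabilistic-noncomputable}. The only point worth flagging is that a single $\sqsubseteq$-increasing sequence simultaneously yields both monotone sequences for $\sqsubseteq'$, so we avoid having to separately combine two measure-$1$ classes (though intersecting two such classes would work equally well).
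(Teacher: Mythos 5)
Your proposal is correct and follows essentially the same approach as the paper: the paper instantiates your abstract $E,O$ decomposition as the evens and odds, copying $\sqsubseteq$ on the evens, the reverse of $\sqsubseteq$ on the odds, and placing the first block below the second. The verification of both the non-existence of a computable monotone sequence and the measure claim proceeds just as you describe.
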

\begin{proof}
Let $(\NN, \sqsubseteq)$ be the ordering obtained in \Cref{thm:linear-probabilistic-noncomputable} and let $(\NN, \sqsubseteq')$ be defined as follows: $a \sqsubseteq' b$ holds if either $a = 2a'$, $b = 2b'$ and $a' \sqsubseteq b'$, or $a = 2a'$ and $b = 2b'+1$, or $a = 2a'+1$, $b = 2b' + 1$ and $b' \sqsubseteq a'$.

The ordering $(\NN, \sqsubseteq')$ corresponds to two copies of $(\NN, \sqsubseteq)$ put one next to each other, with the second copy flipped. It is still a computable ordering with no infinite computable monotone sequence, as such a sequence would eventually stays in one of the copy an yield an infinite computable $\sqsubseteq$-monotone sequence.
\end{proof}

\begin{remark}
The result of \Cref{thm:linear-probabilistic-noncomputable} is not uniform in the sense that we obtain for every $\epsilon > 0$ a functional that computes an infinite increasing sequence for a set of oracles of measure $> 1 - \epsilon$, but codes for such functionals cannot be uniformly found in $\epsilon$. 

This can be solved by noticing that the proof of \Cref{thm:linear-probabilistic-noncomputable} allows us to uniformly compute an infinite sequence $(\NN, \sqsubseteq_n)_{n \in \NN}$ of linear orders with no computable infinite monotone sequence, as well as an infinite list of functionals $\Phi_{e(n)}$ computing an infinite increasing sequence for a set of oracles of measure $> 1 - \frac{1}{n+1}$. To do so, notice that the probability $\Pi_{\ell > 0} (1 - \frac{1}{2^{\ell}}) > \frac{1}{4}$ obtained in the proof could have been made arbitrarily close to $1$ by splitting into $2^{e+n+1}$ blocks instead of $2^{e+1}$ at stage $e$ of the construction.

We can then combine the orderings $(\NN, \sqsubseteq_n)_{n \in \NN}$ into a single ordering $(\NN, \sqsubseteq)$ as follows: let $\langle a,b \rangle \sqsubseteq \langle a',b' \rangle$ if either $b \sqsubset_0 b'$ or if $b = b'$ and $a \sqsubseteq_{b} a'$ (the order $(\NN, \sqsubseteq)$ correspond to the union of the all the orders $(\NN, \sqsubseteq_n)_{n \in \NN}$ placed relative to each other according to $(\NN, \sqsubseteq_0)$). 

The order $(\NN, \sqsubseteq)$ is computable and contains no infinite computable monotone sequence $(\langle a_n,b_n\rangle)_{n \in \NN}$. Indeed, if such a sequence existed, either the value of $b_n$ would change infinitely many time, yielding an infinite computable $\sqsubseteq_0$-monotone sequence, or eventually $b_n$ stays constant to some value $b$ after some stage $k$, and $(a_n)_{n > k}$ would be an infinite computable $\sqsubseteq_b$-monotone sequence. 

Finally, we obtain a single functional $\Phi^A(n)$, taking a parameter $n$ and running $\Phi^A_{e(n)}$ on the order induced on $\{\langle a,n \rangle : a \in \NN\}$. Thus, for every $n \in \NN$, the set of oracles $A$ such that $\Phi^A(n)$ is an infinite increasing sequence if of measure at least $1 - \frac{1}{n+1}$.
\end{remark}

\section{Separable permutations and first-order part}\label[section]{sec:sep-perm-first-order}

The \emph{first-order part} of a second-order system is the set of its first-order consequences, that is, the set of theorems in the language of first-order arithmetic. Characterizing the first-order part of a theorem is of fundamental importance in understanding its reverse mathematical strength. To this end, one usually proves that a second-order theorem~$T_0$ is a $\Pi^1_1$-conservative extension of another theorem~$T_1$ for which the first-order part is already known. If so, then their first-order parts coincide.

The characterization of the first-order part of Ramsey's theorem for pairs and two colors is one of the major remaining open questions around the reverse mathematics of combinatorial theorems. See Ko\l odziejczyk and Yokoyama~\cite{kolo2021search} for a survey on the subject. The first-order part of~$\RT^2_2$ is known to follow strictly from $\Sigma_2$-induction ($\mathsf{I}\Sigma_2$) and to imply the $\Sigma_2$-bounding scheme ($\mathsf{B}\Sigma_2$). It is currently unknown whether it coincides with $\mathsf{B}\Sigma_2$. The goal of this section is to relate the first-order parts of $\RT^2_2$ and $\RT^2_2(p)$ for any separable permutation~$p$.

By \Cref{prop:separable-implies-ads}, $\RCA_0 \vdash \RT^2_2(p) \to \ADS$ for every separable permutation~$p$.
It follows from Hirschfeldt and Shore~\cite{hirschfeldt2007combinatorial} that $\RT^2_2(p)$ implies the $\Sigma^0_2$-bounding scheme over~$\RCA_0$. We now generalize this to every permutation via a direct proof.

\begin{proposition}\label[proposition]{prop:rt22p-implies-bsig2}
    Let $p$ be a permutation. Then $\RCA_0 \vdash \RT_2^2(p) \to \BSig_2$.
\end{proposition}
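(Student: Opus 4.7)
The strategy is to give a direct proof in $\RCA_0$: assuming $\RT^2_2(p)$ holds, I would derive $\BSig_2$ by fixing a $\Sigma^0_2$ formula $\varphi(x,y)$ and a bound $a$ with $\forall x < a\,\exists y\,\varphi(x,y)$, then producing a uniform bound $b$. Letting $h(x) := \mu y\,\varphi(x,y)$, we obtain a total $\Delta^0_2$ function $h : a \to \NN$; the task reduces to showing $h$ is bounded. Fix a computable approximation $(h_s)_{s \in \NN}$ with $h_s(x) \to h(x)$ pointwise. By padding $h$ with constant-zero components when $a < \ell := |p|$, or restricting to the first $\ell$ components when $a > \ell$, we may assume $a = \ell$.

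I would then construct a computable coloring $f : [\NN]^2 \to 2$ whose definition encodes the approximation dynamics through the pattern $p$. For each pair of stages $s < t$, compare the snapshots $(h_s(0),\ldots,h_s(\ell-1))$ and $(h_t(0),\ldots,h_t(\ell-1))$, determine which of the $\ell$ components underwent its most recent update between these stages, and set $f(s,t)$ to match the value of $p$ on the pair of positions reflecting this update. The engineering goal is that an $\ell$-tuple $s_0 < \cdots < s_{\ell-1}$ of stages witnessing successive updates to the $\ell$ components of $h$ realizes the forbidden pattern $p$; the bijectivity of $p$ ensures that its $\ell$ positions are distinguishable, so one can coherently assign the $\ell$ approximation components to the $\ell$ positions of $p$. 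Applying $\RT^2_2(p)$ to $f$ then yields an infinite $H$ that $f$-avoids $p$; by construction $H$ cannot contain such an $\ell$-tuple, so for cofinitely many $s \in H$ every component $h_s(x)$ already equals $h(x)$. Picking any such $s$, the value $b := \max\{h_s(x) : x < \ell\} + 1$ is the desired uniform bound.

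The main obstacle is the precise definition of the coloring $f$ so that the encoding works for every permutation $p$, including non-separable ones such as $1302$ and $2031$, where one cannot route through \Cref{prop:separable-implies-ads} combined with Hirschfeldt and Shore's proof that $\ADS \vdash \BSig_2$. The value of the present proposition is in providing a single, uniform argument covering all permutations at once; the transitivity of $p$ as a linear order on $\ell$ positions and the distinctness of those positions are what keep the encoding non-degenerate in all cases, and make the forced pattern on updating $\ell$-tuples unavoidable as soon as $h$ fails to stabilize.
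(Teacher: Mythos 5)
Your proposal has a fatal gap at the ``WLOG $a = \ell$'' step. The proposition is to be proved over $\RCA_0$ for a fixed (standard, external) permutation $p$, so $\ell = |p|$ is a standard integer. But $\BSig_2$ is only non-trivial for \emph{non-standard} bounds $a$: when $a$ is standard, the instance $\forall x < a\,\exists y\,\varphi(x,y) \to \exists b\,\forall x < a\,\exists y < b\,\varphi(x,y)$ is already provable in $\RCA_0$ by external induction on $a$ (one unfolds the bounded universal quantifier into a standard-length conjunction and takes a maximum). Padding up when $a < \ell$ is harmless, but when $a > \ell$ you cannot ``restrict to the first $\ell$ components'': a bound for $h\uh\ell$ says nothing about $h$ on the rest of $[0,a)$, and that is exactly where the content of $\BSig_2$ lives. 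Once you set $a=\ell$, $\RT^2_2(p)$ is never used and the argument proves nothing. A secondary issue is that $h(x) = \mu y\,\varphi(x,y)$ with $\varphi$ genuinely $\Sigma^0_2$ is not $\Delta^0_2$ and has no computable approximation $(h_s)$; to get an approximable $h$ you would first have to pass to a $\Pi^0_1$ matrix via $\BSig_2 \leftrightarrow \BPi_1$. Finally, the coloring you sketch is never made precise, and the claim that it forces realizations of $p$ is left unverified; this is precisely the combinatorial heart of the statement.

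The paper's proof takes an essentially different and genuinely working route. It argues contrapositively through the equivalence $\BSig_2 \leftrightarrow \RT^1$ over $\RCA_0$: assuming $\neg\BSig_2$, one fixes $f : \NN \to a$ (with $a$ possibly non-standard) having no infinite homogeneous set, defines the transitive coloring $g(x,y) = 0$ iff $f(x) < f(y)$, and shows that \emph{no} infinite set $g$-avoids $p$. The two ingredients are: (i) by external induction, every infinite set contains arbitrarily long tuples with strictly increasing $f$-values (otherwise one produces an infinite $f$-homogeneous set via $\ISig_1$); and (ii) by the finite pigeonhole principle for $a^{|p|}$ colors, one finds $|p|$ such tuples sharing the same vector of $f$-values, from which picking one element per tuple at positions dictated by $p$ produces a $g$-realization of $p$. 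The bijectivity of $p$ is used in step (ii) to make the natural-number order of the selected elements agree with the pattern, not to encode approximation dynamics. If you want to pursue a direct argument, you would need to work with the full bound $a$, not $\ell$, and to face the same combinatorial problem the paper resolves with the pigeonhole-on-value-vectors step.
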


\begin{proof}
$\BSig_2$ being equivalent to $\RT^1$ modulo $\RCA_0$, consider by contrapositive some coloring $f : \NN \to a$ for some $a \in \NN$ with no infinite $f$-homogeneous subset.

Let $g : [\NN]^2 \to 2$ defined by $g(x,y) = 0$ if $f(x) < f(y)$ and $g(x,y) = 1$ otherwise. We claim that for every permutation $p$, there is no infinite set $g$-avoiding $p$. \\

Firstly, by external induction on $n$, we prove that for every infinite set $H = \{x_0, x_1, \dots \}$ there exists some $x_{i_0} < \dots < x_{i_{n-1}}$ such that $f(x_{i_0}) < \dots < f(x_{i_{n-1}})$. Note that if there exists one such $n$-tuple, there exists an infinite amount of them, by truncating the beginning of $H$. 

If $n = 1$ (or $n = 0$), the property holds. So assume the property to be true for some $n \in \omega$ and fix such an infinite set $H = \{x_0, x_1, \dots \}$. Let $H' \subseteq H$ be the set of all $x_{i_{n-1}} \in H$ such that there exists some $x_{i_0} < \dots < x_{i_{n-1}} \in H$ with $f(x_{i_0}) < \dots < f(x_{i_{n-1}})$. By the inductive hypothesis, this set is infinite. If for some $x_{i_{n-1}} < x_{j_{n-1}} \in H'$ we have $f(x_{i_{n-1}}) < f(x_{j_{n-1}})$, then $x_{i_0} < x_{i_1} < \dots < x_{i_{n-1}} < x_{j_{n-1}}$ is the desired sequence. Otherwise, the values taken by $f$ on the elements of $H'$ form an infinite non-increasing sequence of integer smaller than $a$, which must eventually be constant by $\ISig_1$, contradicting the assumption that no infinite $f$-homogeneous set exists.
This proves that for every standard integer $n$, no infinite set can $g$ avoid the homogeneous set of size $n$ and of color $1$. \\

Let $p : n \to n$ be a permutation on $n$ elements and let $H \subseteq \NN$ be an infinite set.
There exists an infinite sequence of $n$-tuples of elements of $H$
$$(x_0^0 < \dots < x_{n-1}^0) < (x_0^1 < \dots < x_{n-1}^1) < \dots$$
each $g$-homogeneous for the color $1$. Notice that for a given n-tuple $(x_0, \dots, x_{n-1})$, there are $a^n$ possible values for $(f(x_0), \dots, f(x_{n-1}))$, so, by the finite pigeonhole principle for $a^n$-colors (provable in $\RCA_0$), there exists a sequence $k_0 < \dots < k_{n-1}$ such that $f(x_i^{k_{\ell}}) = f(x_i^{k_{\ell'}})$ for every $\ell < \ell' < n$ and every $i < n$. Then, the set $\{x_0^{k_{p(0)}} < x_1^{k_{p(1)}} < \dots < x_{n-1}^{k_{p(n-1)}}\}$ $g$-realizes $p$.
\end{proof}

In the quest for characterizing the first-order part of Ramsey's theorem for pairs, Patey and Yokoyama~\cite{patey2018proof} defined a combinatorial principle, namely, the grouping principle, as follows:
A \emph{largeness notion} is a collection $\LL$ of finite sets which is closed under superset, and such that every infinite set contains a finite subset in~$\LL$. For instance, the collection $\LL_\omega = \{ F : \card F > \min F \}$ is a largeness notion. Any element in~$\LL$ is called \emph{$\LL$-large}.

\begin{definition}[$\LL$-grouping]
Let $\LL$ be a largeness notion and $f : [\mathbb{N}]^2 \to 2$ a coloring.
A (finite or infinite) sequence of $\LL$-large sets $F_0 < F_1 < \dots$ of length $k \in \NN \cup \{\infty\}$ is an \emph{$\LL$-grouping} for~$f$ if for every~$i < j < k$, and every $x_0, x_1 \in F_i$ and $y_0, y_1 \in F_j$, $f(x_0, y_0) = f(x_1, y_1)$.
\end{definition}


\begin{statement}[Grouping principle]
$\GP^n_k$ is the statement \qt{For every largeness notion~$\LL$ and every coloring $f : [\NN]^n \to k$, there is an infinite $\LL$-grouping.}
\end{statement}

Kreuzer (see \cite[Theorem 9.1]{patey2018proof}) proved that $\RCA_0 + \GP^2_2$ implies $\BSig_2$ over~$\RCA_0$.
Le Houérou, Patey and Yokoyama~\cite{houerou2023conservation} later proved that $\RCA_0 + \GP^2_2$ is a $\Pi^1_1$-conservative extension of~$\RCA_0 + \BSig_2$.

\begin{proposition}\label[proposition]{prop:gp22-cort22}
For every separable permutation~$p$, $\RCA_0 \vdash \GP^2_2 \to \coRT^2_2(p)$.
\end{proposition}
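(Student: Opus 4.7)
The plan is to reduce $\coRT^2_2(p)$ to the case of $k$-fractals and then induct externally on their dimension. By \Cref{prop:fractal-basis-separable}, the separable permutation $p$ is a sub-pattern of some $k$-fractal of dimension $n$; since avoidance of $p$ implies avoidance of that fractal (any $f$-realization of the larger pattern would contain one of $p$), it suffices to prove $\RCA_0 + \GP^2_2 \vdash \coRT^2_2(q)$ for every $k$-fractal $q$ of standard dimension $n$ with $k \geq 2$ standard, and I would do this by external induction on $n$. The case $n = 0$ is vacuous as no coloring avoids the one-point pattern, and for $n = 1$ avoidance of $q$ means precisely that no $f$-homogeneous set of size $k$ for color $0$ exists, so \Cref{lem:unbalanced-ramsey} (applicable since $\GP^2_2 \vdash \BSig_2$) yields an infinite $f$-homogeneous set for color $1$.

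For the inductive step, I would assume the result at dimension $n$, fix a coloring $f$ avoiding the $k$-fractal $q$ of dimension $n+1$, and split classically on whether some infinite set $X \subseteq \NN$ has $f \uh X$ avoiding the $k$-fractal of dimension $n$. If such an $X$ exists, the induction hypothesis applied to $f \uh X$ (transported to a coloring on $\NN$ via the principal function of $X$) produces an infinite $f$-homogeneous subset of $X$. Otherwise, every infinite subset of $\NN$ contains a finite $f$-realization of the $k$-fractal of dimension $n$, so the collection $\LL = \{F \finsub \NN : F \text{ contains an } f\text{-realization of the } k\text{-fractal of dimension } n\}$ is a largeness notion (closed under superset by definition, and every infinite set has an $\LL$-large subset by the case hypothesis) which is $\Delta^0_1(f)$-definable and hence exists as a set in $\RCA_0$. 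I would apply $\GP^2_2$ with $\LL$ to $f$ to obtain an infinite $\LL$-grouping $F_0 < F_1 < \dots$ in which each $F_i$ contains an $f$-realization of the $k$-fractal of dimension $n$, together with a well-defined cross-color $c(i, j)$ between distinct groups.

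Let $c^* = 0$ if $n$ is even and $c^* = 1$ if $n$ is odd, so that $q$ is built from $k$ copies of the $k$-fractal of dimension $n$ joined by cross-color $c^*$. Any $c$-homogeneous set of size $k$ for color $c^*$, witnessed at indices $i_0 < \dots < i_{k-1}$, would let me splice together an $f$-realization of the $k$-fractal of dimension $n$ from each $F_{i_s}$ into an $f$-realization of $q$, contradicting avoidance. Hence $c$ has no such set, and \Cref{lem:unbalanced-ramsey} (applied to $c$, swapping colors if necessary) produces an infinite $c$-homogeneous set $I$ for color $1 - c^*$; selecting any $x_i \in F_i$ for each $i \in I$ yields the desired infinite $f$-homogeneous set. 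The main obstacle I anticipate is purely verification: confirming that the classical case split is legitimate in $\RCA_0 + \GP^2_2$ (which it is, being a split on a $\Sigma^1_1$ condition handled by classical logic) and that \Cref{lem:unbalanced-ramsey} is genuinely available at the $\BSig_2$-level of $\GP^2_2$, after which the remaining bookkeeping is routine.
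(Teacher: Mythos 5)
Your high-level plan — reduce to $k$-fractals via \Cref{prop:fractal-basis-separable} and then argue by external induction on the dimension, using $\GP^2_2$ with the largeness notion of sets containing a realization of the dimension-$n$ fractal — is a genuinely different decomposition than the paper's, and the combinatorics of the splicing step and the case split are sound. However, the concern you flag at the end is not a verification formality: it is a fatal gap. You invoke \Cref{lem:unbalanced-ramsey} both in the base case $n=1$ and again in the inductive step (to turn \qt{no $c$-homogeneous set of size $k$ for color $c^*$} into an infinite $c$-homogeneous set for $1-c^*$). But \Cref{lem:unbalanced-ramsey} is proved in the paper over $\ISig_2$, and Section~\ref{sec:open-questions} states explicitly that it is \emph{not} provable over $\BSig_2$; since $\RCA_0 + \GP^2_2$ is $\Pi^1_1$-conservative over $\RCA_0 + \BSig_2$ (Le Houérou, Patey, Yokoyama), $\GP^2_2$ cannot supply it either. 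So the implication you want does not go through over $\RCA_0 + \GP^2_2$ by this route.

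The paper's proof avoids this obstacle entirely and that is the whole point of its design. It inducts externally on $|p|$, splitting on whether $p$ is reducible or convergent. If $p = p_0 \uplus p_1$ is reducible, it uses the Mimouni--Patey join-decomposition to pass to a sub-coloring avoiding some $p_i$ and recurses. If $p = p^- \rhd c$ is convergent, it applies $\GP^2_2$ to the largeness notion $L_{p^-}$ of sets containing a $p^-$-realization (after the same case split as yours), and then — crucially — observes that since only \emph{one} more point is needed to complete $p^-$ to $p$, avoidance of $p$ forces \emph{every} cross-color in the grouping to equal $1-c$, so $\{\min F_i : i \in \NN\}$ is immediately homogeneous. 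No unbalanced-Ramsey step is needed. Your fractal route requires splicing $k$ copies at once, which is why you end up needing \Cref{lem:unbalanced-ramsey} and fall just outside what $\GP^2_2$ provides; replacing it with a single-point extension is exactly what makes the paper's argument land in $\RCA_0$.
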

\begin{proof}
By external induction on the length of~$p$.

This property is vacuously true for the pattern of length $1$, so assume that $|p| > 1$ and that the property holds for all pattern of strictly smaller length.
Let $f : [\NN]^2 \to 2$ be an instance of $\coRT^2_2(p)$. There are two cases:
\smallskip

\textbf{Case 1:} $p$ is reducible. Hence, there exists two separable patterns $p_0,p_1$ such that $p_0 \uplus p_1 = p$. Since $f$ avoids $p$, then by Mimouni and Patey~\cite{mimouni2025ramseylike}, there is an infinite set $G = \{ x_0 < x_1 < \dots \}$ $f$-avoiding $p_i$ for some $i < 2$. Let $g : [\NN]^2 \to 2$ be defined by $g(n, m) = f(x_n, x_m)$. In particular, $g$ avoids~$p_i$, so by the induction hypothesis, there is an infinite $g$-homogeneous subset~$H$. The set $\{ x_n : n \in H \}$ is an infinite $f$-homogeneous set.
\smallskip

\textbf{Case 2:} $p$ is convergent, say $p = p^- \rhd c$ for some~$c < 2$. If there is some infinite subset $H \subseteq \NN$ avoiding the pattern $p^-$, then we can apply the induction hypothesis for $p^-$ on the coloring induced by $f$ on the set $H$, which would give us an infinite $f$-homogeneous set.
So assume that no such $H$ exists, hence 
$$L_{p^-} := \{S \subseteq \NN : \exists S' \subseteq S,\ S'~ f\textit{-realizes } p^-\}$$ forms a largeness notion. By $\GP_2^2$, there exists an infinite $L_{p^-}$-grouping $F_0 < F_1 < \dots$ for $f$. By definition of a grouping, for every~$i < j$, every~$x \in F_i$ and $y \in F_j$, $f(x, y) = f(\min F_i, \min F_j)$. Since $f$ avoids~$p$ but $F_i$ $f$-realizes $p^-$, then $f(\min F_i, \min F_j) = 1-c$. Thus, the set $\{ \min F_i : i \in \NN \}$ is $f$-homogeneous for color~$1 - c$. 
\end{proof}

\begin{corollary}\label[corollary]{cor:rt22p-gp-rt22}
    For every separable permutation $p$, $\RCA_0 \vdash \RT^2_2 \leftrightarrow \GP_2^2 + \RT_2^2(p)$.
    Moreover, $\RT^2_2$ and $\SepRT^2_2 + \GP^2_2$ are equivalent over $\omega$-models.
\end{corollary}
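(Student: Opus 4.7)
The plan is to assemble this from the pieces already developed in the paper, with essentially no new combinatorial content. The key ingredients are: (i) the equivalence $\RCA_0 \vdash \RT^2_2 \leftrightarrow \RT^2_2(p) + \coRT^2_2(p)$ for any non-constant pattern $p$ (noted in the introduction), (ii) \Cref{prop:gp22-cort22} which gives $\RCA_0 \vdash \GP^2_2 \to \coRT^2_2(p)$ for separable $p$, and (iii) \Cref{cor:seprt22-rt22}, which states that $\SepRT^2_2$ implies $\RT^2_2$ over $\omega$-models.

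For the first equivalence, the forward direction $\RCA_0 \vdash \RT^2_2 \to \GP^2_2 + \RT^2_2(p)$ is standard. Indeed, any infinite $f$-homogeneous set $H$ trivially $f$-avoids every non-constant pattern, since the only patterns realized inside $H$ are constant, so $\RT^2_2 \to \RT^2_2(p)$ is immediate (the cases $|p| \leq 2$ are either vacuous or ruled out, as $p$ is a non-trivial separable permutation). The implication $\RT^2_2 \to \GP^2_2$ is already known from the grouping-principle literature cited in the paper (Patey and Yokoyama~\cite{patey2018proof}). For the reverse direction, I invoke \Cref{prop:gp22-cort22} to deduce $\coRT^2_2(p)$ from $\GP^2_2$, and then combine with the assumed $\RT^2_2(p)$ via (i) to conclude $\RT^2_2$.

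For the $\omega$-model equivalence, the forward direction is again trivial since $\RCA_0 \vdash \RT^2_2 \to \SepRT^2_2$ (any homogeneous set avoids every non-constant separable permutation) and $\RT^2_2 \to \GP^2_2$. For the reverse direction, $\GP^2_2$ is actually not needed: \Cref{cor:seprt22-rt22} already shows that $\SepRT^2_2$ alone implies $\RT^2_2$ over $\omega$-models. I would still present $\GP^2_2$ as part of the hypothesis (since both are consequences of $\RT^2_2$ and the pair forms the natural $\omega$-model counterpart to the $\RCA_0$ statement), but the proof just cites \Cref{cor:seprt22-rt22}.

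There is no genuine obstacle here; the only care needed is to make sure $p$ is non-constant so that the equivalence $\RT^2_2 \leftrightarrow \RT^2_2(p) + \coRT^2_2(p)$ from the introduction applies, which is automatic for separable permutations of size at least $3$, and to note explicitly that the $\omega$-model direction bypasses $\GP^2_2$ via \Cref{cor:seprt22-rt22} rather than going through the $\RCA_0$ equivalence.
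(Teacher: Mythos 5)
Your argument is correct. For the $\RCA_0$-equivalence it follows the paper exactly: both directions reduce to \Cref{prop:gp22-cort22} together with the equivalence $\RCA_0 \vdash \RT^2_2 \leftrightarrow \RT^2_2(p) + \coRT^2_2(p)$. For the $\omega$-model part, your route is genuinely different: you invoke \Cref{cor:seprt22-rt22} directly to conclude that $\SepRT^2_2$ alone already implies $\RT^2_2$ over $\omega$-models, observing that $\GP^2_2$ is redundant in the hypothesis. The paper instead keeps the two halves of the corollary structurally parallel: it applies \Cref{prop:gp22-cort22} to obtain $\coRT^2_2(p)$ for every standard separable permutation $p$ in the model, and then uses the implication $\RCA_0 \vdash (\SepRT^2_2 \wedge \forall p \mbox{ separable permutation } \coRT^2_2(p)) \to \RT^2_2$ stated in the introduction. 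Your shortcut yields the strictly stronger observation that $\GP^2_2$ is dispensable on the right-hand side, but it imports the full fractal and $\emptyset'$-DNC machinery of \Cref{sec:sep-perm-measure,sec:2dnc-sep} through \Cref{cor:seprt22-rt22}; the paper's version stays within \Cref{prop:gp22-cort22}, which is elementary, and keeps $\GP^2_2$ on display as the organizing theme of that section. Either route is sound.
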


\begin{proof}
The implications $\RCA_0 \vdash \RT^2_2 \rightarrow \GP_2^2 + \RT_2^2(p)$ and $\RCA_0 \vdash \RT^2_2 \rightarrow \GP_2^2 + \SepRT_2^2$ are clear.
Conversely, let $p$ be a separable permutation. Since $\RCA_0 \vdash \RT^2_2(p) + \coRT^2_2(p) \rightarrow \RT^2_2$, then by \Cref{prop:gp22-cort22}, $\RCA_0 \vdash \RT^2_2(p) + \GP^2_2 \rightarrow \RT^2_2$.
Last, let $\M$ be an $\omega$-model of~$\SepRT^2_2 + \GP^2_2$. By \Cref{prop:gp22-cort22}, $\M \models \coRT^2_2(p)$ for every separable permutation~$p$. Since $\RCA_0 \vdash (\SepRT^2_2 \wedge \forall p \mbox{ separable permutation } \coRT^2_2(p)) \to \RT^2_2$, then $\M \models \RT^2_2$.

\end{proof}

\begin{corollary}
For every separable permutation~$p$, $\RCA_0 + \RT^2_2$ is $\Pi^1_1$-conservative over~$\RCA_0 + \BSig_2$ iff $\RCA_0 + \RT^2_2(p)$ is $\Pi^1_1$-conservative over~$\RCA_0 + \BSig_2$.
\end{corollary}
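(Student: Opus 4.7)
The forward direction is immediate: since $\RCA_0 \vdash \RT^2_2 \to \RT^2_2(p)$, any $\Pi^1_1$ sentence provable from $\RCA_0 + \RT^2_2(p)$ is already provable from $\RCA_0 + \RT^2_2$, hence from $\RCA_0 + \BSig_2$ by the hypothesis.

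For the backward direction, the plan is to combine \Cref{cor:rt22p-gp-rt22}, which gives $\RCA_0 \vdash \RT^2_2 \leftrightarrow \GP^2_2 + \RT^2_2(p)$, with the $\Pi^1_1$-conservation of $\RCA_0 + \GP^2_2$ over $\RCA_0 + \BSig_2$ established by Le Houérou, Patey and Yokoyama~\cite{houerou2023conservation}. Using the standard model-theoretic reformulation of $\Pi^1_1$-conservation as the existence of $\omega$-extensions with identical first-order part, I would start from an arbitrary countable $M_0 \models \RCA_0 + \BSig_2$ and build an increasing chain $M_0 \subseteq M_1 \subseteq M_2 \subseteq \cdots$, all sharing the first-order part of $M_0$, by alternating: obtain $M_{2k+1} \models \GP^2_2$ from $M_{2k}$ via the Le Houérou-Patey-Yokoyama conservation, and $M_{2k+2} \models \RT^2_2(p)$ from $M_{2k+1}$ via the hypothesized conservation. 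The implications $\RT^2_2(p) \to \BSig_2$ (\Cref{prop:rt22p-implies-bsig2}) and $\GP^2_2 \to \BSig_2$ (Kreuzer, see \cite[Theorem 9.1]{patey2018proof}) ensure that $\BSig_2$ persists at every stage, so each conservation step remains applicable.

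Taking $M_\omega = \bigcup_n M_n$, any instance of $\GP^2_2$ or $\RT^2_2(p)$ appearing in $M_\omega$ already lies in some $M_n$ and acquires a solution at the next appropriate stage, so $M_\omega \models \GP^2_2 + \RT^2_2(p)$, and hence $M_\omega \models \RT^2_2$ by \Cref{cor:rt22p-gp-rt22}, while retaining the first-order part of $M_0$. Any $\Pi^1_1$ sentence $\varphi$ provable from $\RCA_0 + \RT^2_2$ then holds in $M_\omega$, and by downward absoluteness of $\Pi^1_1$ to the substructure $M_0$ (whose second-order part is included in that of $M_\omega$, with matching first-order part), $\varphi$ holds in $M_0$. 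Since $M_0$ was arbitrary, $\RCA_0 + \BSig_2 \vdash \varphi$ by completeness.

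The main technical point to be careful about is verifying that $\RCA_0$ survives the union $M_\omega$ and that the alternating conservation extensions genuinely compose at limit stage; both are routine once the model-theoretic characterization of $\Pi^1_1$-conservation is in hand, and no new combinatorial work beyond the ingredients already collected in this section is required.
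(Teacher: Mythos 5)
Your proof is correct and follows the same overall strategy as the paper: forward direction by $\RT^2_2 \to \RT^2_2(p)$, backward direction by combining Le Houérou--Patey--Yokoyama's $\Pi^1_1$-conservation of $\GP^2_2$ with the equivalence $\RT^2_2 \leftrightarrow \GP^2_2 + \RT^2_2(p)$ of \Cref{cor:rt22p-gp-rt22}. The only difference is that where you re-derive the amalgamation of the two conservation results by hand via an alternating chain of $\omega$-extensions, the paper simply invokes Yokoyama's amalgamation theorem~\cite{yokoyama2010conservativity} as a black box (stating: if $T_1, T_2$ are $\Pi^1_1$-conservative $\Pi^1_2$ extensions of $T_0 \supseteq \RCA_0$, then so is $T_1+T_2$), whose proof is essentially the chain construction you sketch; citing it directly avoids having to spell out the model-theoretic characterization of $\Pi^1_1$-conservation and the preservation of $\RCA_0$ at limit stages.
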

\begin{proof}
If $\RCA_0 + \RT^2_2$ is $\Pi^1_1$-conservative over~$\RCA_0 + \BSig_2$, then since  $\RCA_0 \vdash \RT^2_2 \to \RT^2_2(p)$, so is $\RCA_0 + \RT^2_2(p)$. Suppose now that $\RCA_0 + \RT^2_2(p)$ is $\Pi^1_1$-conservative over~$\RCA_0 + \BSig_2$. Yokoyama~\cite{yokoyama2010conservativity} proved the following amalgamation theorem: if $T_0, T_1, T_2$ are $\Pi^1_2$ theories such that $T_0 \supseteq \RCA_0$ and $T_1$ and $T_2$ are $\Pi^1_1$-conservative extensions of~$T_0$, then $T_1 + T_2$ is a $\Pi^1_1$-conservative extension of~$T_0$. Letting $T_0 = \RCA_0 + \BSig_2$, $T_1 = \RT^2_2(p)$ and $T_2 = \GP^2_2$, then by Le Houérou, Levy Patey and Yokoyama~\cite{houerou2023conservation} and \Cref{cor:rt22p-gp-rt22}, $\RCA_0 + \RT^2_2$ is $\Pi^1_1$-conservative over~$\RCA_0 + \BSig_2$.
\end{proof}

By Mimouni and Patey~\cite{mimouni2025ramseylike}, $\RT^2_2(1302)$ admits preservation of so-called $\omega$ 2-dim hyperimmunities, and therefore does not imply $\EM$ over $\omega$-models. 
It is currently unknown whether $\EM$ implies $\RT^2_2(1302)$ over $\RCA_0$ and whether $\RT^2_2$ forms a strong minimal cover of~$\EM$. One possible direction consists in studying the strength of $\EM \wedge \RT^2_2(1302)$ so see whether it implies some statement known to be stronger than~$\EM$. We now prove that $\EM + \RT^2_2(1302)$ implies the grouping principle for pairs for $\omega^n$-largeness ($\GP^2_2(\LL_{\omega^n})$). 
The following definition is a parti\-cular case of $\alpha$-largeness, a quantitative notion of largeness defined by Ketonen and Solovay~\cite{ketonen1981rapidly} to better understand the unprovability of some combinatorial theorems.

\begin{definition}
A finite set~$F \subseteq \NN$ is
\begin{itemize}
    \item[(1)] \emph{$\omega^0$-large} if $F \neq \emptyset$;
    \item[(2)] \emph{$\omega^{n+1}$-large} if there are $\min F$ many $\omega^n$-large finite subsets of~$F \setminus \{\min F\}$
    $$
    E_0 < E_1 < \dots < E_{\min F-1}
    $$
\end{itemize}
\end{definition}

Given $n \in \NN$, let $\LL_{\omega^n}$ be the set of all $\omega^n$-large sets.
These notions of $\omega^n$-largeness for $n \in \NN$ are very useful to prove $\forall \Pi^0_3$-conservation statements over $\RCA_0$ (see \cite{patey2018proof,kolo2020some}). We shall in particular use the following theorem.

\begin{theorem}[Patey and Yokoyama~\cite{patey2018proof}]\label[theorem]{thm:rt22-omegan-largeness}
For every~$n \in \NN$, $\RCA_0$ proves \qt{For every coloring $f : [\NN]^2 \to 2$, there is an $\omega^n$-large $f$-homogeneous set}.
\end{theorem}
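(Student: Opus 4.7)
I would prove the statement by external induction on the standard natural number $n$, producing, for each $n$, a proof in $\RCA_0$ of the uniform form: for every $f : [\NN]^2 \to 2$ and every $m \in \NN$, there exists an $\omega^n$-large $f$-homogeneous set $F$ with $\min F \geq m$. Since $n$ is standard, the external induction is a finitary meta-level recursion that produces, for each $n$, a separate $\RCA_0$-proof, so no induction beyond $\ISig_1$ is used internally.

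For the base case $n = 0$, the singleton $\{m\}$ is $\omega^0$-large and vacuously $f$-homogeneous. For the inductive step, assume the statement holds for $n$. Given $f$ and $m$, I would aim to build an $\omega^{n+1}$-large homogeneous set $F$ with minimum $a \geq m$. Unfolding the definition, I must produce $a$ consecutive finite sets $E_0 < E_1 < \dots < E_{a-1}$ above $a$, each $\omega^n$-large, together with $\{a\}$ forming a single $f$-homogeneous set. The natural strategy is to build these blocks stage by stage: at stage $i$, assuming $E_0 < \dots < E_{i-1}$ have already been constructed with a common inner color $c$ and with all cross-pairs (between distinct $E_j$'s and between $a$ and $E_j$) also colored $c$, invoke the inductive hypothesis far above $\max E_{i-1}$ to obtain an $\omega^n$-large homogeneous block $B$, then thin $B$ by finite pigeonhole applied once for each of the $i+1$ witnesses $a, \min E_0, \dots, \min E_{i-1}$, so as to fix the cross-color with each to a uniform value.

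The principal difficulty is that $\omega^n$-largeness is a hierarchical combinatorial condition, not merely a size condition, so an arbitrary subset of an $\omega^n$-large set need not be $\omega^n$-large. I would handle this by over-producing: invoke the inductive hypothesis with a threshold so large that a ``density'' lemma about the $\omega^n$-hierarchy -- namely, that any 2-coloring of an $\omega^{n+1}$-large set with large enough minimum admits an $\omega^n$-large monochromatic sub-block -- allows me to extract, after all the pigeonhole thinning, a block that is still $\omega^n$-large. This density lemma itself is proved by a separate external induction on $n$, along Ketonen–Solovay lines, and is a $\Pi_2$ statement each instance of which is provable in $\RCA_0$.

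The secondary bookkeeping -- ensuring that all $a$ blocks share the same inner color $c$ and uniformly agree on cross-colors -- is resolved by producing, say, $2a \cdot 2^{O(a)}$ blocks instead of $a$, and applying the finite pigeonhole principle (available in $\RCA_0$) on the finitely many possible color-labels of each block, retaining a monochromatic subfamily of size at least $a$. Since all quantifications are over finite objects of bounded complexity, the whole inductive step formalizes in $\RCA_0$ using only $\Sigma_1$-induction. The main obstacle, as noted, is the density lemma for $\omega^n$-largeness: its statement and inductive proof are what make the quantitative analysis go through, and this is where the delicate combinatorics of the Ketonen–Solovay hierarchy enter the argument.
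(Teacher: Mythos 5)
This theorem is not proved in the paper; it is imported as a citation from Patey and Yokoyama~\cite{patey2018proof}, so there is no in-paper argument to compare against. I will therefore evaluate your proposal on its own.

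Your high-level plan — external induction on the standard parameter $n$, a partition (``density'') lemma for $\omega^n$-largeness, and finite pigeonhole to synchronize colors — names the right ingredients, and you correctly identify that the whole difficulty is the quantitative combinatorics of the Ketonen--Solovay hierarchy. However, the specific block-by-block architecture you describe has two genuine gaps. First, the invariant you claim to maintain (``all cross-pairs between distinct $E_j$'s and between $a$ and $E_j$ are colored $c$'') is not preserved by the thinning you perform: thinning the new candidate block $B$ by its cross-color with $a, \min E_0, \dots, \min E_{i-1}$ controls pairs $(\min E_j, y)$ with $y \in B$, but leaves pairs $(x, y)$ with $x \in E_j$, $x \neq \min E_j$ entirely uncontrolled. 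To control those you would have to thin by every element of every previous block, which is far more than $i+1$ pigeonholes. Second, and more fundamentally, even the $i+1$ thinnings you do perform cost largeness at each step, and $i$ ranges up to $a-1$ where $a = \min F$ is a first-order parameter that is not standard in a general model of $\RCA_0$. Your ``over-producing'' fix — start from something $\omega^{n+?}$-large so that the $i+1$ thinnings leave an $\omega^n$-large residue — requires $?$ to grow with $i$, hence with $a$, which cannot be supplied by an external induction on the fixed standard index $n$. An external inductive hypothesis hands you exactly $\omega^n$-large sets, not $\omega^{n+a}$-large ones for a model-dependent $a$.

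The argument in Patey--Yokoyama (and in the Ketonen--Solovay/Erd\H{o}s--Mills tradition it follows) sidesteps both issues by \emph{not} trying to assemble blocks that are already internally and mutually homogeneous. Instead one fixes, externally, an explicit exponent $g(n)$ and proves by external induction that $\RCA_0$ shows: every $\omega^{g(n)}$-large set $X$ with $\min X$ above a fixed threshold admits an $\omega^n$-large $f$-homogeneous subset for every $f:[X]^2 \to 2$. The construction produces a \emph{min-homogeneous} (Erd\H{o}s--Rado style) sequence $a_0 < a_1 < \dots$, where at each step the reservoir is split by $f(a_i,\cdot)$ into two pieces and one keeps the larger one; the cost per step is accounted for with the finer currency of $\omega^\alpha \cdot k$-largeness (so each split costs a bounded amount, not a full exponent), and only at the very end is a single application of the partition lemma used to pass from the min-homogeneous sequence to an $f$-homogeneous one. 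This bookkeeping is precisely what makes the exponent $g(n)$ a fixed external function of $n$ rather than something depending on model-internal quantities. Your ``density lemma'' as stated — an $\omega^{n+1}$-large set $2$-colored has an $\omega^n$-large monochromatic part — is the partition lemma for a single split and is fine as far as it goes, but it is applied a bounded number of times in the correct proof, not $a$ times as your stage-by-stage scheme would require.
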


We are now ready to prove the claimed proposition.

\begin{proposition}\label[proposition]{prop:em-1302-gp22}
For every~$n \in \NN$, $\RCA_0 \vdash \EM + \RT^2_2(1302) \to \GP^2_2(\LL_{\omega^n})$.
\end{proposition}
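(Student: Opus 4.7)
The plan is to proceed by external induction on $n$. The base case $n=0$ is immediate, since every non-empty set is $\omega^0$-large, so the sequence of singletons $F_k = \{k\}$ is an $\omega^0$-grouping for any coloring.

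For the inductive step from $n$ to $n+1$, I first apply the inductive hypothesis to $f$ to obtain an $\omega^n$-grouping $G_0 < G_1 < \dots$ for $f$, and define the between-block coloring $c : [\NN]^2 \to 2$ by letting $c(i,j)$ be the constant value of $f$ on $G_i \times G_j$ for $i < j$. I then apply $\EM$ to $c$ to obtain a $c$-transitive set $T$, on which $c$ encodes a linear order $<_M$, and apply $\RT^2_2(1302)$ together with its dual $\RT^2_2(2031)$ iteratively to $c$ restricted to $T$ in order to extract an infinite $T' \subseteq T$ on which $<_M$ avoids both patterns $1302$ and $2031$. By \Cref{thm:characterization-sep-bose}, every finite sub-pattern of $(T', <_M)$ is then a separable permutation.

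The combinatorial merging step becomes straightforward once I have extracted an infinite $<_M$-monotone subset $H = \{h_0 < h_1 < \dots\}$ of $T'$, which I may take to be $<_M$-ascending by symmetry. I partition $H$ into consecutive intervals $I_k = \{h_{a_k}, \dots, h_{a_{k+1}-1}\}$ where the indices are defined recursively by $a_{k+1} = a_k + \min G_{h_{a_k}} + 1$, and merge them as $\tilde F_k = \bigcup_{h \in I_k} G_h$. The $\min G_{h_{a_k}}$ many $\omega^n$-large subsets $G_{h_j}$ for $j \in (a_k, a_{k+1})$ all lie above $\min \tilde F_k = \min G_{h_{a_k}}$, witnessing that $\tilde F_k$ is $\omega^{n+1}$-large. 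For $k < k'$ and any $x \in G_{h_j} \subseteq \tilde F_k$, $y \in G_{h_{j'}} \subseteq \tilde F_{k'}$, the $<_M$-ascending property of $H$ yields $h_j <_M h_{j'}$, so $f(x,y) = c(h_j, h_{j'}) = 0$. Thus $\tilde F_0 < \tilde F_1 < \dots$ is the desired $\omega^{n+1}$-grouping for $f$.

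The main obstacle will be extracting this infinite $<_M$-monotone subset of $T'$ within $\RCA_0 + \EM + \RT^2_2(1302)$, since $\RT^2_2(1302)$ is not known to imply $\ADS$ over $\RCA_0$. I expect this step to exploit the recursive direct and skew sum decomposition of separable permutations, as used in the proof of \Cref{prop:separable-trivial-ads}, combined with $\BSig_2$ (available through \Cref{prop:rt22p-implies-bsig2}) to carry out a bounded search over candidate monotone chains whose existence is guaranteed by the structural constraints from the avoidance of $1302$ and $2031$; if the direct argument proves delicate, a fallback is to apply $\EM$ or $\RT^2_2(1302)$ a second time to an auxiliary coloring derived from $<_M$ in order to simplify the order type before the search.
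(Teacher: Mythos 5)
Your proposal takes a genuinely different route from the paper's, but it has a gap that I do not think can be repaired within the available resources. The paper does not proceed by induction on $n$: it applies $\EM$ and $\RT^2_2(1302)$ once to $f$ to obtain an infinite transitive $f$-avoiding $1302$ and $2031$, invokes \Cref{thm:rt22-omegan-largeness} to get $\omega^n$-large $f$-homogeneous blocks directly, and then assembles a grouping via a case analysis on the limit behavior of block minima together with a ``merging'' claim (Claim~1 in the paper) that exploits the avoidance of $1302$ and $2031$ at the level of $f$ itself. Crucially, no monotone sequence is ever extracted.

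Your inductive step, in contrast, hinges on extracting an infinite $<_M$-monotone subset $H$ of $T'$, and you correctly identify this as the main obstacle. But this step is an application of $\ADS$ (or at least $\SADS$) to the linear order $(T', <_M)$, and the paper itself shows this is not available: the unnumbered proposition at the end of \Cref{sec:linear-order-measure} constructs, for any computable $\SADS$-instance with no computable solution, an $\omega$-model of $\RCA_0 + \EM + \RT^2_2(1302)$ containing no solution. So $\EM + \RT^2_2(1302)$ does not even prove $\SADS$, and there is no hope of justifying the extraction of $H$ from these hypotheses. Your proposed fallback is also based on a misreading of the structure: the order $(T', <_M)$ \emph{avoids} $1302$ and $2031$, which by \Cref{thm:characterization-sep-bose} means precisely that every sub-pattern of it is a separable permutation — it does not mean the order avoids any particular separable permutation, so \Cref{prop:separable-trivial-ads} does not apply. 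Indeed \Cref{prop:linear-order-avoid-all-but-sep} constructs exactly such a linear order with no computable monotone sequence. The existence of arbitrarily long finite monotone chains (which the structure does guarantee) cannot be upgraded to an infinite one by a $\BSig_2$-bounded search; that upgrade is the content of $\ADS$, which you do not have.

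If you want to salvage the underlying idea of merging blocks, you would need to replace the monotone-subsequence extraction with an argument that the $f$-color between blocks can be made to stabilize without sorting them by $<_M$. This is essentially what the paper's Claim~1 achieves, using the simultaneous avoidance of both $1302$ and $2031$ (together with transitivity) to show that once an element has exhibited both colors toward later elements, its subsequent behavior is ``inherited'' by nearby elements. I would recommend studying that argument rather than trying to push through an $\ADS$-based merge.
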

\begin{proof}
Fix $n \in \omega$. First, note that $\EM$ (see Kreuzer~\cite{kreuzer2012primitive}) and $\RT^2_2(1302)$ (see \Cref{prop:rt22p-implies-bsig2}) both imply~$\BSig_2$.
Let $f : [\NN]^2 \to 2$ be a coloring.
Since $\RCA_0 \vdash \RT^2_2(1302) \leftrightarrow \RT^2_2(2031)$, then by $\EM$ and $\RT^2_2(1302)$, there exists an infinite $f$-transitive set~$X$ which $f$-avoids~2031 and~1302, that is, avoids the pattern (\ref{eq:pattern-2031}) and the pattern (\ref{eq:pattern-1302}) below:
\vspace{1.3cm}
\begin{equation}\label{eq:pattern-1302}
\patternfour{22}    
\end{equation}
By \Cref{thm:rt22-omegan-largeness}, there is an infinite set~$Y \subseteq X$ and some color~$i < 2$ 
such that every infinite set $Z \subseteq Y$ contains an $\omega^n$-large $f$-homogeneous subset for color~$i$. Indeed, suppose that the set~$X$ does not satisfy the desired property for $i = 0$. Then there is an infinite subset~$Y \subseteq X$ which does not contain any $\omega^n$-large $f$-homogeneous subset for color~0. Then, by  \Cref{thm:rt22-omegan-largeness}, $Y$ has the desired property for $i = 1$.
\smallskip

\textbf{Claim 1:} For every $a < b < c \in Y$ such that $f(a, b) = i$ and $f(a, c) = f(b, c) = 1-i$,
then for every~$d \in H$ with $d > c$, we have $f(a, d) = f(b, d)$.
Indeed, if $f(a, d) = 1-i$ but $f(b, d) = i$, the 3-cycle $\{a, b, d\}$ would contradict $f$-transitivity of~$Y$ for color~$i$. Moreover, if $f(a, d) = i$ and $f(b, d) = 1-i$, then since $Y$ $f$-avoids~1302 (for $i=0)$ and 2031 (for $i=1$), $f(c, d) = 1-i$, and the 3-cycle $\{a, c, d\}$ contradicts $f$-transitivity of $Y$ for color~$1-i$. This proves our claim.
\smallskip

We now have two cases.

\smallskip
\textbf{Case 1:} There is some infinite subset $Z \subseteq Y$ such that for every $\omega^n$-large $f$-homogeneous subset $F \subseteq Z$ for color~$i$, $\lim_{y \in Z} f(\min F, y) = i$.
In this case, let $F_0 < F_1 < \dots$ be an infinite sequence of $\omega^n$-large $f$-homogeneous subsets of~$Z$ for color~$i$. The set $\{ \min F_s : s \in \NN \}$ is an infinite set such that every element has limit~$i$ in~$Z$. Then, using $\BSig_2$, there is an infinite $f$-homogeneous subset for color~$i$, and \emph{a fortiori} an infinite $\omega^n$-grouping for~$f$.

\smallskip
\textbf{Case 2:} Case 1 does not hold. We then create an infinite sequence $F_0 < x_0 < F_1 < x_1 < \dots$ such that for every~$s \in \NN$,
\begin{itemize}
    \item[(a)] $F_s$ is an $\LL$-large $f$-homogeneous subset of~$Y$ for color~$i$;
    \item[(b)] $x_s \in Y$ is such that $f(\min F_s, x_s) = 1-i$;
    \item[(c)] for every~$t > s$, $F_t$ is homogeneous for the coloring $y \mapsto f(\min F_s, y)$.
\end{itemize}
Assume $F_0 < x_0 < \dots < F_k < x_k$ are defined. Search $H$-computably for some~$F_{k+1} < x_{k+1}$ such that the sequence satisfies (a-c). We claim that such $F_{k+1} < x_{k+1}$ exist. Indeed, by $\RT^1$ which follows from $\BSig_2$, there is an infinite set~$Z \subseteq Y$ which is simultaneously homogeneous for every coloring $y \mapsto f(\min F_\ell, y)$ for every~$\ell \leq k$. By choice of~$Y$ and since Case~1 does not hold, there is an $\omega^n$-large $f$-homogeneous set $F_{k+1} \subseteq Z$ for color~$i$, and some~$x_{k+1} \in Z$ with $x_{k+1} > F_{k+1}$ such that $f(\min F_{k+1}, x_{k+1}) = 1-i$. This completes the construction.

We claim that the sequence $(F_s)_{s \in \NN}$ is an $\omega^n$-grouping for~$f$. Fix $s < t \in \NN$. Since $F_s$ is $f$-homogeneous for color~$i$ and $f(\min F_s, x_s) = 1-i$, then by $f$-transitivity of~$Y$ for color~0, for every~$y \in F_s$, $f(y, x_s) = 1-i$. It follows from Claim~1 that for every~$y \in F_s$ and every~$z \in Y$ with $z > x_s$, $f(\min F_s, z) = f(y, z)$. Last, since $F_t$ is homogeneous for $z \mapsto f(\min F_s, z)$, then for every $y \in F_s$ and $z \in F_t$, $f(y, z) = f(\min F_s, \min F_t)$. This completes the proof of \Cref{prop:em-1302-gp22}.
\end{proof}

The remainder of this section is dedicated to the proof that $\EM$ implies $\GP^2_2$ over $\omega$-models.

\begin{lemma}\label[lemma]{lem:finite-increasing-large-sequence}
   Let $\LL$ be a computable largeness notion and $\L = (\NN, <_\L)$ a computable linear order, then, for every $k \in \NN$, there exists some sequence $F_0, \dots, F_{k-1}$ of $\LL$-large sets, such that $\max_\L(F_i) <_\L \min_\L(F_{i+1})$ for every $i < k-1$.
\end{lemma}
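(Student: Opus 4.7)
The plan is to prove this by external induction on $k$, after first generalizing the statement to the following \emph{parametrized} form: for every infinite $X \subseteq \NN$, there exist $\LL$-large finite sets $F_0, \ldots, F_{k-1} \subseteq X$ with $\max_\L F_i <_\L \min_\L F_{i+1}$ for all $i < k-1$. The base case $k = 1$ is immediate from the definition of a largeness notion: since $X$ is infinite, it contains an $\LL$-large finite subset.

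For the inductive step from $k$ to $k+1$, the naive greedy approach of picking $F_0 \subseteq X$ $\LL$-large and then recursing on $\{y \in X : y >_\L \max_\L F_0\}$ fails, because this tail may be finite --- for instance, if $\L$ has order type $\omega^{*}$, every element has only finitely many $\L$-successors. The key idea is therefore to first dichotomize the elements of $X$ according to their $\L$-local behavior: color $x \in X$ \emph{red} if $\{y \in X : y >_\L x\}$ is infinite, and \emph{blue} otherwise. Since $X$ is infinite, one of the two color classes is infinite.

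If the red class $X_R$ is infinite, pick an $\LL$-large $F_0 \subseteq X_R$; then $\max_\L F_0 \in X_R$ is red, so $X' := \{y \in X : y >_\L \max_\L F_0\}$ is infinite, and the inductive hypothesis applied to $X'$ yields $F_1, \ldots, F_k$, giving the desired sequence $F_0, F_1, \ldots, F_k$. If instead the blue class $X_B$ is infinite, observe that for $x \in X_B$ the set $\{y \in X : y \leq_\L x\}$ is cofinite in $X$, hence $\{y \in X : y <_\L x\}$ is infinite; we now build the sequence in the opposite direction, picking an $\LL$-large $F_k \subseteq X_B$ first and applying the inductive hypothesis to the infinite set $\{y \in X : y <_\L \min_\L F_k\}$ to obtain $F_0, \ldots, F_{k-1}$ entirely below $F_k$ in $<_\L$.

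The only real conceptual obstacle is noticing that one cannot in general build the sequence greedily in a fixed direction; the red/blue dichotomy ensures that whichever direction has infinitely many elements available, we extend in that direction, and recursion handles the rest. All other ingredients --- closure of $\LL$ under supersets, existence of $\LL$-large subsets of infinite sets, and the pigeonhole on two colors --- are either given or trivial.
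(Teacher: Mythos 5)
Your proof is correct, but it takes a genuinely different route from the paper's. You strengthen the induction hypothesis by relativizing the statement to an arbitrary infinite $X \subseteq \NN$, then dichotomize each element of $X$ according to whether it has infinitely or finitely many $\L$-successors in $X$; whichever class is infinite, you grow the chain in that direction (forward from the bottom in the red case, backward from the top in the blue case). The paper instead strengthens the induction hypothesis \emph{quantitatively}: it shows that for every $n$, a length-$k$ chain exists with at least $n$ elements $\L$-above its $\L$-maximum (equivalently, that infinitely many $x$ admit a $k$-chain $\L$-below them), and at the inductive step it repeatedly strips $\L$-maximal elements from the set of witnesses to guarantee headroom before picking the new $\LL$-large block on top, thereby never having to choose a direction. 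Your dichotomy isolates the conceptual obstruction cleanly (the greedy approach fails when the order is eventually $\omega^*$-like) and is arguably easier to follow; the paper's version has the advantage of producing chains with arbitrarily much space above them, which meshes with how the lemma is later consumed and with the remark that the same argument carries over to $\omega^n$-largeness via \Cref{thm:rt22-omegan-largeness}. One small point worth stating explicitly in your write-up: $\max_\L F_0 \in F_0 \subseteq X_R$ (respectively $\min_\L F_k \in F_k \subseteq X_B$) is why the recursion set is infinite; you say this but it deserves emphasis since it is the pivot of the whole argument.
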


\begin{proof}
By induction on $k$, we prove the stronger property that for infinitely many $x \in \NN$, there exists some sequence $F_0, \dots, F_{k-1}$ of $\LL$-large sets, such that $\max_\L(F_i) <_\L \min_\L(F_{i+1})$ for every $i < k-1$ and such that $\max_{\L} F_{k-1} <_\L x$. \\

The property is vacuously true for $k = 0$.

Assume the property to be true for some $k \geq 0$. We can then computably find some infinite sequence $x_0 < x_1 < \dots$ such that for every $\ell \in \NN$, there is some sequence $F_0^\ell, \dots, F_{k - 1}^\ell$ of $\LL$-large sets, with $\max_\L(F_i^\ell) <_\L \min_\L(F_{i+1}^\ell)$ for every $i < k-1$ and such that $\max_{\L}(F_{k - 1}^\ell) <_{\L} x_\ell$. 

Define inductively the sets $(H_n)_{n \in \NN}$ as follows: let $H_0 = \{x_0, x_1, \dots \}$, then, for every $n \in \NN$, if $H_n$ has some $\L$-maximal element $x$, let $H_{n+1} = H_n \setminus \{x\}$, otherwise let $H_{n+1} = H_n$. Every $H_n$ is an infinite set, as we only remove at most one element each time.

By definition of a largeness notion, for every $n$, there exists some $\LL$-large subset $F \subseteq H_n$. Let $\ell \in \NN$ be such that $\min_{\L}(F) = x_\ell$, then $\max_{L}(F_{k - 1}^\ell) <_\L \min_{\L}(F)$ and there are at least $n$ elements $x \in \NN$ that are $\L$-bigger than $\max_{\L} F$ by definition of $H_n$ (if $H_n$ has no maximal element, then this is obvious, and is $H_n$ has some maximal element, then there are $n$ elements bigger than it in $H_0$). Hence the property is true for $k+1$.
\end{proof}

Note that by \Cref{thm:rt22-omegan-largeness}, the previous lemma can be proven over $\RCA_0$ for $\omega^n$-largeness. Indeed, any $\omega^{n+1}$-large $\L$-homogeneous set~$F$ can be decomposed into $\min F$ many $\omega^n$-large sets with the desired property.

The following proposition uses the same proof structure as in \Cref{sec:2dnc-sep}.

\begin{proposition}\label[proposition]{prop:2-DNC-linear-grouping}
    Let $\LL$ be a computable largeness notion and $\L = (\NN, <_\L)$ a computable linear order. Then, every $\emptyset'$-DNC degree $\dbf$ computes an infinite $\LL$-grouping.
\end{proposition}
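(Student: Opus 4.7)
The plan is to follow the template of \Cref{prop:2dnc-computes-homogeneous}. We will take a $\emptyset'$-escaping function $g \leq_T \dbf$ via \Cref{lem:equiv-dnc-avoid} and construct the grouping $F_0 <_{\NN} F_1 <_{\NN} \dots$ iteratively through a $g$-computable sequence of Mathias-style conditions $(E_s, X_s)$. Here $E_s = F_0 \cup \dots \cup F_{s-1}$ will be the partial grouping (with a chosen color profile) and $X_s$ will be a computable infinite reservoir of elements $\NN$-above $E_s$ such that, for each $i < s$, every element of $X_s$ lies on one fixed $\L$-side of $F_i$ (determining the color $c_{is}$). Thus any $\LL$-large $F_s \subseteq X_s$ will extend the grouping consistently, and the updated reservoir will be obtained by restricting $X_s$ to one $\L$-side of $F_s$.

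At stage $s$, the key task will be to pick $F_s$ so that the resulting restriction remains infinite. We will call a candidate $\LL$-large set $F \subseteq X_s$ \emph{bad} if both of its $\L$-sides within $X_s$ are finite --- equivalently, its $\L$-extent $[\min_\L F, \max_\L F]_\L$ is cofinite in $X_s$. Being bad is a $\Sigma^0_2$-predicate. By \Cref{lem:finite-increasing-large-sequence} relativized to $X_s$, we will have access to arbitrarily long $\L$-ascending sequences of $\LL$-large candidates. Following the strategy of \Cref{lem:DNC-avoid-infinite} and \Cref{prop:2dnc-computes-homogeneous}, the escaping function $g$ will let us select non-bad candidates from such a family whenever good candidates are abundant, while handing us an explicit bound on the number of bad candidates we need to avoid at each stage.

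The argument will then split into two cases, parallel to \Cref{prop:2dnc-fractal-avoiding}. In Case~1, some infinite computable $Y \subseteq X_s$ has the property that among sufficiently many candidates enumerated by \Cref{lem:finite-increasing-large-sequence}, the fraction of bad ones is uniformly bounded; then $g$ will select good $F_s$'s indefinitely, producing the grouping. In Case~2, bad candidates dominate in every infinite computable $Y \subseteq X_s$; then we will argue that $Y$ is forced to contain an infinite computable $\L$-monotone subsequence, which we will partition into $\LL$-large blocks via \Cref{lem:finite-increasing-large-sequence} to obtain a computable grouping directly.

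The hard part will be simultaneously enforcing the $\NN$-ordering on the blocks and the $\L$-separation condition, since \Cref{lem:finite-increasing-large-sequence} produces $\L$-ascending sequences with no a priori control on $\NN$-order. Extracting a $\NN$-ordered sub-family will likely require a refined dichotomy tracking $\NN$-position alongside $\L$-extent, exploiting the transitivity of the linear order and the computable structure of $\L$-intervals to keep the bad set $\Sigma^0_2$ and sparse enough for the escaping function to navigate.
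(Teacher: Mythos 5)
Your high-level template is sound (escaping function via \Cref{lem:equiv-dnc-avoid}, Mathias-style construction, bad $=$ cofinite $\L$-interval which is $\Sigma^0_2$, \Cref{lem:finite-increasing-large-sequence} to supply candidate families), but you miss the structural fact that makes the whole argument collapse into a single straightforward case, and as a result you invent a Case~1/Case~2 dichotomy modeled on \Cref{prop:2dnc-fractal-avoiding} that is both unnecessary and incoherent here.

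The key observation in the paper's proof is: if $F_0, \dots, F_{k-1}$ are finite sets with $\max_\L(F_i) <_\L \min_\L(F_{i+1})$, then the $\L$-intervals they define are pairwise disjoint, and two disjoint subsets of an infinite reservoir cannot both be cofinite in it --- hence \emph{at most one} $F_i$ can be bad. There is no need for a probabilistic fraction bound or a dichotomy on whether bad candidates ``dominate'': in any $\L$-ascending family of $k$ candidates, at most one is bad, full stop. Your Case~2 (``bad candidates dominate in every infinite computable $Y$'') can therefore never arise, and your claim that it forces a computable $\L$-monotone subsequence is not needed (and does not obviously follow anyway). Once you have this observation, the $\emptyset'$-c.e.\ set enumerating the (at most one per observed modulus value) bad indices has size $\leq x$, and the escaping function directly hands you a good block. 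The only genuine case split is the outer one on whether $\dbf \geq_T \emptyset'$, which the paper dispatches via the folklore $\Pi^0_1$ monotone sequence in every computable linear order (Manaster).

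On the $\NN$-versus-$\L$ tension you flag as ``the hard part'': the paper's resolution is simpler than the ``refined dichotomy'' you anticipate. The reservoir after choosing $H_0 < \dots < H_{s-1}$ consists of all $x$ that are $\NN$-above $\max H_{s-1}$ and $\L$-outside all $s$ intervals; it naturally partitions into $s+1$ pieces $X^0, \dots, X^s$ according to how many of the $\max_\L H_j$ lie $\L$-below $x$. Searching for the $\L$-ascending family of $\LL$-large candidates \emph{within a single} $X^\ell$ (one of which is infinite) simultaneously guarantees $\NN$-order, $\L$-separation from the new block, and consistency of colors with all previous blocks, with no extra bookkeeping. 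Your version, which tries to fix the $\L$-side of each $F_i$ upfront in the reservoir $X_s$, could be made to work, but again only once you have the ``at most one bad'' bound to guarantee the reservoir stays infinite.
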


\begin{proof}
If $\dbf \geq_T \emptyset'$, then we are done, as every computable linear order admits a $\Pi^0_1$ infinite $\L$-monotonous sequence (Manaster; see Downey~\cite[Section 5]{downey1998computability}). The degree $\dbf$ can therefore compute such a sequence, and from it, any infinite sequence of $\LL$-large subsets forms an $\LL$-grouping.

If $\dbf \not \geq_T \emptyset'$, then, let $g \leq_T \dbf$ be the function of \Cref{lem:equiv-dnc-avoid}(2). Let $\mu_s$ be the standard left-c.e.\ approximation of the modulus $\mu$ of $\emptyset'$, that is, $\mu_s(x)$ is the least~$t \leq s$ such that $\emptyset'_t \uh x = \emptyset'_s \uh x$. For every $x \in \NN$, the sequence $(\mu_s(x))_{s \in \NN}$ can only change values at most $x$ times (every time a program with Turing index less than $x$ stops), and let $m^x_0 < \dots < m^x_{i_x} (= \mu(x))$ be those values for some $i_x < x$.

Every set $F \subseteq \NN$ defines an interval comprised of all the elements $x$ such that $\min_\L F <_\L x < \max_\L F$. For $H_0, \dots, H_{s - 1}$ a family of finite sets, let $X_{\langle H_0, \dots, H_{s - 1} \rangle}$ be set of all the elements bigger (for the standard order on $\NN$) than $\max H_{s-1}$ and not inside one of the interval defined by the $H_i$ for $i < s$.

A finite set $F$ will be said to be \emph{good} in $X_{\langle H_0, \dots, H_{s - 1} \rangle}$ if there are infinitely many $x \in X_{\langle H_0, \dots, H_{s - 1} \rangle}$ not in the interval determined by $F$, otherwise $F$ is said to be \emph{bad} in $X_{\langle H_0, \dots, H_{s - 1} \rangle}$. Notice that being bad is a $\Sigma_2^0$ property, and that if $F$ is good in $X_{\langle H_0, \dots, H_{s - 1} \rangle}$, then $X_{\langle H_0, \dots, H_{s - 1}, F \rangle}$ is infinite.

Given some infinite set $X_{\langle H_0, \dots, H_{s - 1} \rangle}$ and some sequence $F_0, \dots, F_{k-1}$ of finite sets such that $\max_\L(F_i) <_\L \min_\L(F_{i+1})$ for every $i < k-1$, at most one $F_i$ can be bad in $X_{\langle H_0, \dots, H_{s - 1} \rangle}$. Indeed, under our assumption, the intervals defined by the $F_i$ are disjoints, hence no two of them can have a finite complement in $X_{\langle H_0, \dots, H_{s - 1} \rangle}$. \\

We can $\dbf$-computably build an infinite sequence of finite sets $H_0 < H_1 < \dots$ satisfying that:
\begin{itemize}
    \item $X_{\langle H_0, \dots, H_{s - 1} \rangle}$ is infinite for every $s \in \NN$.
    \item $H_0, \dots, H_{s-1}$ forms an $\LL$-grouping for every $s \in \NN$.
\end{itemize}
 
Suppose $H_0, \dots, H_{s-1}$ have been defined for some $s \in \NN$. Since they form an $\LL$-grouping, their corresponding intervals are disjoints and $X_{\langle H_0, \dots, H_{s - 1} \rangle}$ can be partitioned into $s+1$ many blocks $X^0, \dots, X^{s}$ with $X^i$ containing all the $x \in X_{\langle H_0, \dots, H_{s - 1} \rangle}$ such that $\max_{\L} H_j <_\L x$ for exactly $i$ values of $j$.

For every $x \in \NN$, let $W_{e(x,\langle H_0, \dots, H_{s-1} \rangle)}^{\emptyset'}$ be the $\emptyset'$-c.e.\ set which, for every $j < i_x$, searches for some $\ell \leq s$ and some sequence $F_0, \dots, F_{m_j^x} \subseteq X^{\ell}$ of $\LL$-large sets, such that $\max_{\L}(F_i) <_{\L} \min_{\L}(F_{i+1})$ for every $i < m_j^x$ and then list the index $i \leq m_j^x$ such that $F_i$ is bad in $X_{\langle H_0, \dots, H_{s - 1} \rangle}$ if there is one.

Note that $\card W_{e(x,\langle H_0, \dots, H_{s-1} \rangle)} \leq x$ for every $x \in \NN$ and let $h(x) = g(e(x,\langle H_0, \dots, H_{s-1} \rangle), x)$. Since $\dbf \not \geq_T \emptyset'$ and $h$ is $\dbf$-computable, we have $h(x) < \mu(x)$ for infinitely many $x$, hence we can wait and find some $x$ and $j$ such that $h(x) < m_j^x$. 

Computably search for some $\ell \leq s$ and some sequence $F_0, \dots, F_{m_j^x} \subseteq X^{\ell}$ of $\LL$-large sets such that $\max_{\L}(F_i) <_{\L} \min_{\L}(F_{i+1})$ for every $i < m_j^x$. Such a sequence exists by \Cref{lem:finite-increasing-large-sequence} as one of the $X^{\ell}$ is infinite. Then, by definition of $g$, $F_{h(x)}$ will be good in $X_{\langle H_0, \dots, H_{s - 1} \rangle}$ . Pick $H_s = F_{h(x)}$. By our assumption that $F_{h(x)} \subseteq X^{\ell}$ for some $\ell$, we have that $H_0, \dots, H_s$ forms an $\LL$-grouping.
\end{proof}

\begin{theorem}
    $\EM$ implies $\GP^2_2$ in $\omega$-models.
\end{theorem}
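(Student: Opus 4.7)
The plan is to combine $\EM$ with Proposition \ref{prop:2-DNC-linear-grouping} via a $\emptyset'$-DNC function obtained from the Mimouni-Patey construction. Let $\M \models \EM$ be an $\omega$-model and fix an instance of $\GP^2_2$ in $\M$: a coloring $f : [\NN]^2 \to 2$ and a largeness notion $\LL$. I would first apply $\EM$ inside $\M$ to $f$, obtaining an infinite $f$-transitive set $T \in \M$; enumerating $T = \{t_0 < t_1 < \dots\}$, this induces a linear order $\L$ on $\NN$ (computable in $T \oplus f$) via $i <_\L j \iff f(t_i, t_j) = 0$ for $i < j$ in standard order, and the associated coloring $c_\L$ satisfies $c_\L(k, \ell) = f(t_k, t_\ell)$ for $k < \ell$.

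Next, set $Z = f \oplus T \oplus \LL$ and invoke the Mimouni-Patey result (\cite{mimouni2025ramseylike}, relativized to $Z$): there is a $Z$-computable coloring every infinite pattern-avoiding subset of which computes a $Z'$-DNC function. Applying $\EM$ in $\M$ to this coloring therefore yields a $Z'$-DNC function $g \in \M$. I would then pull the largeness notion back along the bijection $\phi : k \mapsto t_k$ to obtain a $Z$-computable largeness notion $\widetilde{\LL} = \{F \subseteq \NN : \phi(F) \in \LL\}$ on $\NN$; closure under superset and the largeness property both follow directly from $\phi$ being an order-preserving bijection onto $T$. The relativized form of Proposition \ref{prop:2-DNC-linear-grouping}, applied to $\L$, $\widetilde{\LL}$, and $g$, then produces an infinite $\widetilde{\LL}$-grouping $(F_i)_{i \in \NN}$ for $c_\L$ inside $\M$.

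Finally, pushing forward through $\phi$, I would set $F_i' = \phi(F_i) = \{t_k : k \in F_i\} \subseteq T$. Each $F_i'$ lies in $\LL$ by definition of $\widetilde{\LL}$, the sequence $F_0' < F_1' < \dots$ is standard-order increasing since $\phi$ is order-preserving, and for $x = t_k \in F_i'$ and $y = t_\ell \in F_j'$ with $i < j$, we have $f(x, y) = c_\L(k, \ell)$, which is constant by the $\widetilde{\LL}$-grouping property. Hence $(F_i')_{i \in \NN}$ is an $\LL$-grouping for $f$ in $\M$. The only step needing real care is the relativization of Proposition \ref{prop:2-DNC-linear-grouping}; since its proof uses only relativizable ingredients (the $Z'$-DNC avoidance argument built on Lemma \ref{lem:finite-increasing-large-sequence} and Manaster's theorem), this relativization should be routine.
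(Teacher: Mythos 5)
Your proof takes essentially the same approach as the paper: apply $\EM$ once to reduce the coloring to a linear order, use the Mimouni--Patey construction together with a second application of $\EM$ to obtain a $\emptyset'$-DNC (relativized) function inside the model, and then invoke the relativized form of \Cref{prop:2-DNC-linear-grouping}. The paper's proof simply treats the pullback of the largeness notion through the transitive set as routine and leaves it implicit, whereas you spell it out; the substance is identical.
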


\begin{proof}
Let $\M = (\NN,S)$ be an $\omega$-model of $\RCA_0 + \EM$.

Let $\LL$ be an $X$-computable largeness notion for some set $X \in S$ and let $f : [\NN]^2 \to 2$ be a $Y$-computable coloring for some set $Y \in S$. Using $\EM$, we can assume $f : [\NN]^2 \to 2$ to be a linear order. 

$\EM$ implies the existence of a set whose degree is $(X \oplus Y)'$-DNC, hence, by a relativized form of \Cref{prop:2-DNC-linear-grouping}, this set computes an infinite $\LL$-grouping for $f$, which will therefore be in $S$. Hence $\M \models \GP_2^2$.
\end{proof}

\section{Open questions}\label[section]{sec:open-questions}

Thanks to \Cref{maintheorem1}, in order to decide whether a Ramsey-like theorem implies $\RT^2_2$ over $\omega$-models, is now sufficient to check whether the corresponding forbidden pattern is a separable permutation, which is a finitistic verification. The characterization of the Ramsey-like theorems equivalent to Ramsey's theorem over $\omega$-models is only a first step towards the understanding of the structure of the Ramsey-like theorems partially ordered by the implication relation in reverse mathematics. The systematic study of Ramsey-like statements is relatively new, so many questions remain open. The known relations over $\omega$-models are given in \Cref{fig:diagram}.

\begin{figure}[htbp]
\begin{center}
\begin{tikzpicture}
    \node(A) at (3,0){$\RCA_0$};
    \node(B) at (3,6){$\RT_2^2$};
    \node(C) at (3,4){$\EM$};
    \node(D) at (-2,4){$\ADS$};
    \node(E) at (6,3){$\RT_2^2(p)$};
    \node(F) at (0,6){$\RT_2^2(p)$};
    \node(G) at (6,4){$\RT_2^2(1302)$};
    \node(H) at (1,3){$\RT_2^2(p)$};
    \node(I) at (1,4){$\RT_2^2(~~~~~~~)$};
    
    \node[fill=black, shape=circle,scale=0.35] (1) at (1,4) {};
	\node[fill=black, shape=circle,scale=0.35] (2) at (1.35,4) {};
	\node[fill=black, shape=circle,scale=0.35] (3) at (1.7,4) {}; 
    \path (1) edge[bend left,scale=0.2] node[below,yshift=2pt] {\tiny 0} (2);
    \path (2) edge[bend left,scale=0.2] node[below,yshift=2pt] {\tiny 0} (3);
    \path (1) edge[bend left=45,scale=0.2] node[above,yshift=-2.5pt] {\tiny 1} (3);
    \path (1) edge[bend right=45,opacity=0,scale=0.2] node {} (3);

    \node[rectangle,dashed,draw,minimum width = 2cm,minimum height = 2cm, opacity=0.75, label={[opacity=0.75, align=center]:\tiny\shortstack{\linespread{0.7}\selectfont non-separable\\ \tiny permutations}}] (r1) at (6,3.5) {};
    \node[rectangle,dashed,draw,minimum width = 2cm,minimum height =1cm, opacity=0.75, label={[opacity=0.75]:\tiny separable permutations}] (r2) at (0,6) {};
    \node[rectangle,dashed,draw,minimum width = 2.2cm,minimum height = 2cm, opacity=0.75, label={[opacity=0.75]:\tiny non permutations}] (r3) at (1,3.5) {};

    \draw [-stealth,double](B) -- (C);
    \draw [-stealth,double](B) -- (D);
    \draw [<->](C) -- (I);
    \draw [-stealth,double](H) -- (A);
    \draw [-stealth,double](D) -- (A);
    \draw [-stealth,double](E) -- (A);
    \draw [-stealth, negated] (G) -- (C) node[midway,above] {\tiny \cite{mimouni2025ramseylike}};
    \draw [-stealth, dashed, maybe] (C) to[out=-30, in=-160] (G);
    \draw [<->] (B) -- (F) node[midway,above] {\tiny Thm.~\ref{maintheorem1}};
    \draw [-stealth](G) -- (E);
    \draw [-stealth,double](B) -- (G) node[midway,above] {\tiny \cite{mimouni2025ramseylike}};
    \draw [-stealth](I) -- (H) node[midway,right] {\tiny \cite{mimouni2025ramseylike}};
\end{tikzpicture}\label[figure]{fig:diagram}
\caption{Implications over $\omega$-models of $\RCA_0$. A simple (double) arrow represents a (strict) implication over $\RCA_0$. A dashed arrow with a question mark represents denotes that the implication is still open and a crossed arrow represents a non-implication. \\ Note that by \cite{mimouni2025ramseylike}, there exist non-permutations $p$ such that $\RT^2_2(p)$ is strictly weaker than $\EM$ over $\omega$-models.}
\end{center}
\end{figure}

The most fundamental question is about the general structure of this partial order.

\begin{question}
What is the structure of the Ramsey-like statements, ordered by implication over~$\RCA_0$? over $\omega$-models? over computable reduction?
\end{question}

We showed in particular in \Cref{cor:em-minimal-cover} that $\RT^2_2$ forms a minimal cover of~$\EM$ over $\RCA_0$ in the set of Ramsey-like statements, and \emph{a fortiori} over $\omega$-models. It remains unknown whether this is actually a strong minimal cover. 

\begin{question}
Is $\RT^2_2$ a strong minimal cover of~$\EM$ over~$\RCA_0$? over $\omega$-models? over computable reduction?
\end{question}

In particular, over $\omega$-models, if $\RT^2_2(p)$ is strictly weaker than~$\RT^2_2$, then by \Cref{maintheorem1}, $p$ is not a separable permutation, so either it is not a permutation, in which case $\RT^2_2(p)$ follows from~$\EM$, or it is a permutation, but not separable, and by \Cref{thm:characterization-sep-bose} it follows from $\RT^2_2(1302)$. By Mimouni and Patey~\cite{mimouni2025ramseylike}, $\RT^2_2(1302)$ does not imply $\EM$ over~$\omega$-models, but the other direction is unknown.

\begin{question}
Does $\EM$ imply $\RT^2_2(1302)$ over $\omega$-models? 
\end{question}



The main theorem of this article states that $\RT^2_2(p)$ implies $\RT^2_2$ over $\omega$-models whenever~$p$ is a separable permutation. The proof however seems to make a strong use of $\omega$-models, in particular with \Cref{lem:unbalanced-ramsey}. We shall prove in a later paper that \Cref{lem:unbalanced-ramsey} is not provable over~$\BSig_2$.

\begin{question}
For every separable permutation~$p$, does $\RCA_0 \vdash \RT^2_2(p) \to \RT^2_2$?
\end{question}

Moreover, the proof uses two applications of $\RT^2_2(p)$. In particular, 120 and 102 are the two smallest separable permutations for which the following question is open.

\begin{question}
For every separable permutation~$p$, is $\RT^2_2$ computably reducible to $\RT^2_2(p)$?
\end{question}

To prove our main theorem, we show that for every computable instance of $\coRT^2_2(p)$, every 2-random computes a solution. However, the proof seems to make multiple essential uses of $\Sigma^0_2$-induction.

\begin{question}
For every separable permutation~$p$, does $\RCA_0 + \BSig_2 \vdash \RANs{2} \to \coRT^2_2(p)$?
\end{question}

The statement $\RANs{2}$ was studied by various authors in reverse mathematics~\cite{conidis2013random,nies2020randomness,belanger2021where}. In particular, Conidis and Slaman~\cite{conidis2013random} showed that $\BSig_2 + \RANs{2}$ is equivalent to $\WWKLs{2}$ over~$\RCA_0$, and is $\Pi^1_1$-conservative over~$\RCA_0 + \BSig_2$, where $\WWKLs{2}$ is weak K\"onig's lemma for $\Delta^0_2$ trees of positive measure.

\bigskip
\begin{center}
\textbf{Acknowledgements}
\end{center}
The authors are thankful to the anonymous referees for insightful comments and improvement suggestions.

\bibliographystyle{plain}
\bibliography{biblio}

\end{document}